\documentclass[12pt,french,american]{article}
\usepackage[margin=1in]{geometry}
\usepackage{mathrsfs, bbm, dsfont}
\usepackage{
amsmath,
amsfonts,
latexsym, 
amsrefs,
amssymb,
mathtools,
tikz,
tabularx,
minibox,
amsthm,
}
\usepackage{bm}
\usepackage{babel}
\usepackage{floatrow}
\usepackage{comment}
\usepackage[T1]{fontenc}
\usepackage{graphicx}
\DeclareGraphicsExtensions{.eps}
\usepackage{url}
\usepackage{wrapfig}
\usepackage{titlesec}
\usepackage{titletoc}
\usepackage[title,titletoc]{appendix}
\usepackage{pdflscape}
\usepackage{afterpage}
\usepackage{tocloft}

\usepackage{caption}
\usepackage{subcaption}
\usepackage{enumerate}
\definecolor{mygray}{gray}{0.3} 
\usepackage[colorlinks=true,urlcolor=gray,linkcolor=mygray,citecolor=mygray]{hyperref}

\usetikzlibrary{shapes}
\usetikzlibrary{intersections}
\usetikzlibrary{svg.path}
\usetikzlibrary{decorations.markings}
\usetikzlibrary{hobby}

\setlength{\textheight}{8.5in}

\flushbottom
\numberwithin{equation}{section}
\setlength{\unitlength}{1cm}

\renewcommand{\rm}{\mathrm}

\newcommand{\al}{\alpha}
\newcommand{\de}{\delta}
\newcommand{\be}{\begin{equation}}
\newcommand{\ee}{\end{equation}}

\newcommand{\Ga}{{\Gamma}}
\newcommand{\la}{\lambda}

\newcommand{\cop}{\mathrm{cop}}

\setcounter{tocdepth}{1}
\renewcommand{\S}{\mathbb S}

\newcommand{\D}{\mathbb{D}}

\renewcommand{\S}{[\bf S]}

\newcommand{\Pair}{\mathscr{P}}
\newcommand{\AdPair}{\mathscr{P}}
\newcommand{\Gin}{\rm{Gin}}

\newcommand{\CUE}{\mathrm{CUE}}

\newcommand{\distconv}{\xrightarrow[N \rightarrow \infty]{d}}

\renewcommand{\S}{\mathbb{S}^2}
\newcommand{\Proj}{\mathbb{R}P^2}
\newcommand{\OO}{\mathrm{O}}
\newcommand{\oo}{\mathrm{o}}

\DeclareMathOperator{\PP}{\mathbb{P}}

\newcommand{\E}{\mathbb{E}}
\newcommand{\R}{\mathbb{R}}
\newcommand{\C}{\mathbb{C}}
\newcommand{\N}{\mathbb{N}}
\newcommand{\Z}{\mathbb{Z}}

\newcommand{\dd}{\mathrm{d}}
\newcommand{\tr}{\mathrm{Tr}}
\newcommand{\Tr}{\mathrm{Tr}}
\renewcommand{\lll}{\left}
\newcommand{\rr}{\right}
\newcommand{\FC}{\mathrm{FC}}

\theoremstyle{plain} 
\newtheorem{theorem}{Theorem}[section]
\newtheorem*{theorem*}{Theorem}
\newtheorem{lemma}[theorem]{Lemma}

\newtheorem*{lemma*}{Lemma}
\newtheorem{corollary}[theorem]{Corollary}
\newtheorem*{corollary*}{Corollary}
\newtheorem{proposition}[theorem]{Proposition}
\newtheorem*{proposition*}{Proposition}
\newtheorem{fact}[theorem]{Fact}
\newtheorem*{fact*}{Fact}

\newtheorem{definition}[theorem]{Definition}
\newtheorem*{definition*}{Definition}

\newtheorem*{example*}{Example}
\newtheorem{remark}[theorem]{Remark}

\newtheorem*{remark*}{Remark}
\newtheorem*{remarks*}{Remarks}

\makeatletter
\renewcommand{\section}{\@startsection
{section}
{1}
{0mm}
{-2\baselineskip}
{1\baselineskip}
{\normalfont\large\scshape\centering}} 
\makeatother

\makeatletter
\renewcommand{\subsection}{\@startsection
{subsection}
{2}
{0mm}
{-\baselineskip}
{1 \baselineskip}
{\normalfont\scshape}} 
\makeatother


\makeatletter
\renewcommand{\subsubsection}{\@startsection{subsubsection}{3}{\z@}%
  {3.25ex \@plus 1ex \@minus .2ex}{-1em}{\normalfont\normalsize\itshape}}
\makeatother
\usepackage[T1]{fontenc}

\setcounter{secnumdepth}{5}
\setcounter{tocdepth}{2}


\def\@empty{}

\def\author#1{\par
    {\centering{\authorfont#1}\par\vspace*{0.05in}}}

\def\titlefont{\fontsize{12}{15} \textbf{}}
\def\authorfont{\fontsize{12}{15}}

\let\affiliationfont\rhfont

\def\address#1{\par
    {\centering{\affiliationfont#1\par}}\par\vspace*{12pt}
}

\def\body{
\setcounter{footnote}{0}
\def\thefootnote{\alph{footnote}}
\def\@makefnmark{{$^{\rm \@thefnmark}$}}
}

\def\title#1{ 
    \thispagestyle{plain}
    \vspace{-30pt}
    \vskip 79pt
    {\centering{\titlefont #1\par}}%
    \vskip 1em
}

\providecommand{\keywords}[1]{\vspace{.3in} \textbf{Keywords:} #1}

\begin{document}

\title{\uppercase{ \textbf{On Words of non-Hermitian Random Matrices } } }
\vspace{.5cm}
\noindent \begin{minipage}[b]{0.49\textwidth}
 \author{Guillaume Dubach \\
 IST Austria}
\address{\small guillaume.dubach@ist.ac.at}
\end{minipage}
\begin{minipage}[b]{0.49\textwidth}
\author{Yuval Peled  \\
 Courant Institute, NYU}
\address{\small yuval.peled@cims.nyu.edu}
\end{minipage}

\vspace{-.1in}
\begin{abstract}
We consider words $G_{i_1} \cdots G_{i_m}$ involving i.i.d. complex Ginibre matrices, and study tracial expressions of their eigenvalues and singular values. We show that the limit distribution of the squared singular values of every word of length $m$ is a Fuss-Catalan distribution with parameter $m+1$. This generalizes previous results concerning powers of a complex Ginibre matrix, and products of independent Ginibre matrices. In addition, we find other combinatorial parameters of the word that determine the second-order limits of the spectral statistics. For instance, the so-called coperiod of a word characterizes the fluc\-tuations of the eigenvalues. We extend these results to words of general non-Hermitian matrices with i.i.d. entries under moment-matching assumptions, band matrices, and sparse matrices.

These results rely on the moments method and genus expansion, relating Gaussian matrix integrals to the counting of compact orientable surfaces of a given genus. This allows us to derive a central limit theorem for the trace of any word of complex Ginibre matrices and their conjugate transposes, where all parameters are defined topologically.


\end{abstract}

\vspace{-.2in}
\tableofcontents

\keywords{Complex Ginibre Ensemble; Fuss-Catalan distribution; Genus expansion; Mixed moments; Second order freeness; Words of random matrices }

\section{Introduction}

The connection between matrix integrals and enumeration of combinatorial maps on surfaces was initially discovered in the context of quantum field theory \cites{feynman, hooft, parisi}. In mathematics, the problem of enumerating  combinatorial maps and  planar triangulations was introduced by Tutte \cites{tutte,tutte1}, and its connection to random matrix theory appeared in the work of Harer and Zagier concerning the Euler characteristic of moduli spaces of complex curves \cite{harer}. See \cites{Guionnet2006,zvonkin,book} for a detailed exposition. \medskip

The basic idea that connects matrix integrals and enumeration of maps is Wick's principle. Roughly speaking, the principle states that a moment of many Gaussian variables can be expanded to a sum of moments of pairs of variables, where the summation is taken over all the pairings. Wick's principle can be used in the computation of moments of traces of Gaussian matrices. In such case, each pairing in Wick's summation naturally gives rise to a surface, and it turns out that the contribution of any pairing to the sum is determined by the number of connected components of this surface, and their genera. \medskip

In general, there are two directions in which to exploit this connection. On the one hand, it enables to enumerate combinatorial objects like planar maps by solving the corresponding matrix integral. On the other hand, it relates questions in random matrix theory to combinatorial questions about surfaces. For example, ideas related to genus expansion have been used extensively in the context of several-matrix models \cites{CapitaineCasalis1, CapitaineCasalis2, GuionnetMaurel,GuionnetMaida}, and in free probability in order to establish first and second degree freeness of various ensembles \cites{MingoSpeicher1, MingoSpeicher2, NicaSpeicher, Redelmeier}.
For another example, an analogous version of the genus expansion for Haar measured unitary matrices has been recently used by Puder and Magee to study word measures \cite{Puder}. They studied the parameters of a word, considered as an element of a free group, that determine key features of the distribution of the random unitary matrix induced by that word. \medskip

The goal of this paper is to use the genus expansion technique to derive properties of singular values, eigenvalues and mixed moments of words of random non-Hermitian matrices. The main focus will be on words in complex Ginibre matrices and their conjugate transposes: let $N>0$ be an integer and  $G_1,G_2,...$ a sequence of i.i.d.\! Ginibre matrices of order $N$. A $*$-word $w$ is a word over the formal alphabet $\{G_1,G_2,...\}\cup\{G_1^*,G_2^*,...\}$ and it induces a random matrix $G_w$ of order $N$ by replacing each letter of $w$ with the appropriate matrix and taking the matrix product. A word whose letters are taken only from the formal alphabet $\{G_1,G_2,...\}$ is called $*$-free. The main question this paper addresses is: what combinatorial parameters of $w$ determine the distribution of the singular values, eigenvalues and mixed moments of $G_w$?  \medskip

Our first result establishes a first order limit for the singular values of a $*$-free word $G_w$. Namely, with probability $1$, the empirical measure of the squared singular values converges weakly to a limit that only depends on the length of $w$.

\begin{theorem}\label{thm:mainFC}
Let $w$ be a $*$-free word of length $m$, $G_w$ the corresponding product of complex $N \times N$ Ginibre matrices, and $\mu_{ww^*}$ be the empirical measure of the squared singular values of $G_w$. The following weak convergence occurs almost surely:
$$
\mu_{ww^*} \distconv \rho_{FC}^{m+1},
$$
where $\rho_{FC}^{m+1}$ denotes the Fuss-Catalan distribution with parameter $s=m+1$.
\end{theorem}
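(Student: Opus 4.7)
The plan is to proceed by the method of moments: I will establish that for every fixed $k \geq 1$,
\[
\lim_{N \to \infty}\frac{1}{N}\, \mathbb{E}\!\left[\mathrm{Tr}\bigl((G_w G_w^*)^k\bigr)\right] \;=\; \FC_k^{(m+1)},
\]
the $k$-th moment of $\rho_{FC}^{m+1}$, and separately control the variance so that convergence holds in probability. Since $\rho_{FC}^{m+1}$ is compactly supported, the moment problem is determined and this suffices to identify the limiting empirical measure.

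First, I would write $(G_w G_w^*)^k$ as a cyclic product of $2mk$ Ginibre factors and expand the expected trace via Wick's formula, equivalently via the genus-expansion formalism. Each pair matching $\pi$ of the factors that survives the Gaussian expectation must pair each $G_i$-factor with a $G_i^*$-factor carrying the same letter label; such a matching encodes a ribbon graph drawn on an oriented surface whose contribution to the sum is $N^{-mk + F(\pi)}$, where $F(\pi)$ counts the faces. The Euler-genus bound $F(\pi) \leq mk + 1$, saturated exactly in genus zero, reduces the first-order limit to counting the non-crossing $G$-to-$G^*$ matchings on the cyclic word $(ww^*)^k$ that \emph{also} respect the letter labels imposed by $w$.

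The key combinatorial step, and the one I expect to be the main obstacle, is to show that the letter-label constraint is automatically satisfied by any non-crossing $G/G^*$ matching. The underlying reason is the palindromic structure of $ww^* = G_{i_1}\cdots G_{i_m} G_{i_m}^* \cdots G_{i_1}^*$: at each interface between a $G$-block and an adjacent $G^*$-block the letter indices mirror each other, so every adjacent $G/G^*$ pair of positions in $(ww^*)^k$ carries the same letter label. Any non-crossing perfect matching on a cycle contains at least one pair of matched adjacent points; since $G$-$G$ and $G^*$-$G^*$ pairings are killed by the expectation, that innermost arc must be a $G/G^*$ adjacent pair and is therefore automatically label-matched. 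Peeling it off yields a shorter cyclic word which is again palindromic in the same sense, and an induction on $mk$ concludes.

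Consequently the limiting $k$-th moment depends on $w$ only through its length $m$. Specializing to $w = G_1^m$ recovers the already-known case of powers of a single Ginibre matrix, for which the planar count is precisely the Fuss-Catalan number $\FC_k^{(m+1)}$; alternatively, one may identify the surviving matchings with $(m{+}1)$-ary plane trees and invoke the classical combinatorial formula. For the variance, the same genus-expansion machinery applied to $\mathbb{E}\bigl[\bigl(\mathrm{Tr}((G_w G_w^*)^k)\bigr)^2\bigr]$ shows that matchings connecting the two traces carry an extra genus cost, so they contribute only $O(1)$ and yield $\mathrm{Var}\!\bigl(N^{-1}\mathrm{Tr}((G_w G_w^*)^k)\bigr) = O(N^{-2})$. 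Together with the first-moment computation and Borel-Cantelli (or Chebyshev plus a union bound), this promotes convergence of moments to convergence in probability of $\mu_{ww^*}$ to $\rho_{FC}^{m+1}$.
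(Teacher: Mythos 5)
Your proposal is correct and follows essentially the same route as the paper: the moment method combined with the genus expansion, reduction of the leading order to planar (non-crossing) star/star-free pairings, the key observation that planarity automatically enforces the letter-label constraint, and identification of the resulting count with the Fuss--Catalan numbers of parameter $m+1$. The only differences are minor: the paper proves the label-redundancy by noting that the balance condition forces a star edge in position $j$ of its block to pair with a star-free edge in position $m+1-j$ of its block (rather than your adjacent-pair peeling induction, which works but whose inductive invariant --- that newly adjacent star/star-free pairs remain label-matched --- deserves the height-function justification you only gesture at), it derives the Fuss--Catalan recurrence directly rather than citing the known power case, and your explicit variance bound is a welcome step the paper leaves implicit.
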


Fuss-Catalan distributions were known to be the first order limit of squared singular values for products of independent GUE matrices \cite{Alexeev_beamer}, products of independent Ginibre matrices \cite{Penson}, as well as powers of Ginibre matrices \cite{Alexeev}. We establish that it actually holds for every $*$-free word of Ginibre matrices, and universally so (that is, beyond the Gaussian case, see Section \ref{extension_section} ). By a similar analysis, we show that the limit of all the {\em mixed matrix moments} of $G_w$ depends only on the length of $w$. Such mixed moments are deeply connected with both eigenvalues and eigenvectors; they appear naturally in the moments of Girko's hermitized form, or in the expansion of the quaternionic resolvent (see \cite{BurdaSwiech}), and their connection with eigenvector overlaps has been studied by Walters \& Starr in \cite{StarrWalters}. The fact that the first order asymptotics of these mixed moments only depends on the length suggests that the limit of empirical measure of eigenvalues of $G_w$ only depends on the length, as is the case for singular values. 
In fact, it is known that the eigenvalues of the products of $m$ i.i.d.\! Ginibre matrices, as well as of the $m$-th power of a Ginibre matrix, exhibit convergence to the same limit, namely the $m$-twisted circular law, which is the pushforward of the circular law by the $m$-th power map~\cite{BurdaNowak}. \medskip

Our next result deals with the second-order behaviour of the eigenvalues of $G_w$. The {\em coperiod} of a $*$-free word $w$, denoted by $\cop(w)$, is the largest integer $k$ such that $w=u^k$ for some $*$-free word $u$. While the empirical measure of the singular values, and likely of the eigenvalues, of $G_w$ is determined by the length of $w$, we show that the fluctuations of polynomial statistics of its eigenvalues are determined by the coperiod of $w$.

\begin{theorem}\label{StarfreeCLT_joint}
For every $*$-free word $w$ and every integer $k>0$,
$$
\left(\Tr(G_w),\Tr(G_w^2),...,\Tr(G_w^k)\right) \distconv 
\left(Z_1,Z_2,...,Z_k \right),
$$
where $Z_j$ is $\mathscr{N}_{\C}(0, j\cdot\cop(w) )$-distributed and the $Z_j$'s are independent.
\end{theorem}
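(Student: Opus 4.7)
The plan is to prove Theorem~\ref{StarfreeCLT_joint} via the method of moments, using the same Wick/genus expansion that underlies Theorem~\ref{thm:mainFC}. Since $\Tr(G_w^j)$ is a polynomial in the entries $g_{\alpha\beta}$ of the Ginibre matrices with no complex conjugates, we have $\E[\Tr(G_w^j)]=0$ identically, and similarly every joint moment with no $\overline{\Tr}$-factors vanishes. Thus it suffices to show that every joint mixed moment
\[
\E\Big[\prod_{j=1}^{k}\Tr(G_w^j)^{a_j}\,\overline{\Tr(G_w^j)}^{\,b_j}\Big]
\]
converges, as $N\to\infty$, to the Gaussian moment $\prod_{j}\delta_{a_j,b_j}\,a_j!\,(j\cdot\cop(w))^{a_j}$, which characterizes the target distribution.

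I would first reduce to the case of a primitive word. Writing $w=u^{\cop(w)}$ with $u$ primitive, we have $\Tr(G_w^j)=\Tr(G_u^{j\cdot\cop(w)})$, and the theorem reduces to showing that for every primitive $u$ and finite set $L\subset\Z_{>0}$, the family $(\Tr(G_u^l))_{l\in L}$ converges jointly to independent $\mathscr{N}_{\C}(0,l)$ variables. To compute each mixed moment I would expand via Wick's formula: each factor $\Tr(G_u^l)$ corresponds to an oriented $l|u|$-gon whose edges carry the colors $i_1,\dots,i_{|u|}$ repeated cyclically $l$ times, and $\overline{\Tr(G_u^l)}$ to a polygon with the reversed cyclic pattern. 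A Wick pairing matches every $G_{i_s}$-edge with a $G_{i_s}^*$-edge and glues the polygons into a compact oriented surface whose contribution is $N^{\chi-F}$, where $\chi$ is the Euler characteristic and $F$ the number of polygons. Hence the leading-order contributions come from pairings whose glued surface is a disjoint union of spheres, each obtained by gluing exactly two polygons.

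A sphere-component built from a $\Tr(G_u^l)$-polygon and a $\overline{\Tr(G_u^{l'})}$-polygon forces $l=l'$ by the per-color matching constraint, so only terms with $a_l=b_l$ for every $l$ contribute at leading order. For each $(\Tr(G_u^l),\overline{\Tr(G_u^l)})$-pair, I claim that the number of planar color-preserving pairings is exactly $l$: such pairings are in bijection with the $l$ cyclic alignments of the $l$ copies of $u$ in one polygon with the $l$ copies of $u$ in the other, and primitivity of $u$---which asserts that the color sequence $(i_1,\dots,i_{|u|})$ has no smaller period---is precisely what rules out any further planar alignments. The $a_l!$ ways to match the $a_l$ $\Tr$-polygons to the $a_l$ $\overline{\Tr}$-polygons of length $l|u|$ then produce $\prod_l a_l!\,l^{a_l}$, matching the Gaussian moment with variances $l$. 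Restoring $l=j\cdot\cop(w)$ yields Theorem~\ref{StarfreeCLT_joint}.

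The main obstacle is this combinatorial step: the bi-polygon planar count replaces the single-polygon count that gives the Fuss--Catalan numbers in the proof of Theorem~\ref{thm:mainFC}. Two verifications are required: first, that the planar color-preserving pairings between the two polygons built from $l$ copies of a primitive $u$ number exactly $l$; and second, that any connected component containing three or more polygons contributes an $N^{\chi-F}\le N^{-1}$ correction (a parity/Euler-characteristic argument, since an orientable closed surface has $\chi\le 2$ and each polygon adds one to $F$). Once these facts are in hand, the vanishing of mean, the matching of higher joint moments against Gaussian Wick moments, and the reduction to primitive words are all formal.
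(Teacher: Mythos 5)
Your proposal is correct and follows essentially the same route as the paper: there, Theorem \ref{StarfreeCLT_joint} is obtained by combining Proposition \ref{StarfreeCLT} (the single-word CLT, where the genus expansion reduces the limiting variance to the number of spherical pairings of $w$ with $w^*$, identified as $\cop(w)$ via the rotational symmetry of the mirror pairing) with the asymptotic independence of trace-distinct star-free words (Proposition \ref{independence}), both resting on the same leading-order analysis you invoke --- only pairings yielding disjoint unions of two-face spheres survive (Lemma \ref{lemma:diatomic}). Your inlined joint-moment computation, including the reduction to a primitive root $u$ and the count of $l$ cyclic alignments ruled by primitivity, is exactly this argument in unfactored form.
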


This is reminiscent of a celebrated result of Diaconis-Shahshahani \cite{DiaconisShahshahani} for a matrix of the Circular Unitary Ensemble (CUE), i.e. a unitary matrix chosen according to the Haar measure on $U_N(\C)$.

\begin{theorem}[Diaconis-Shahshahani] If $U$ is a $\CUE$ matrix,
$$
(\Tr U, \Tr U^2, \dots, \Tr U^m) \distconv (Z_1, Z_2, \dots, Z_m)
$$
where the $Z_j$'s are independent complex Gaussian vectors, $Z_j$ having variance~$j$.
\end{theorem}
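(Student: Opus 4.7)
The plan is to prove the theorem by the method of moments, reducing it to an exact computation of mixed moments of traces via symmetric-function theory. Since the claimed limit $(Z_1,\dots,Z_m)$ consists of independent complex Gaussians with $\E|Z_j|^2 = j$, and the joint law of such Gaussians is determined by the mixed moments
$$
\E\left[\prod_{j=1}^m Z_j^{a_j}\, \overline{Z_j}^{\,b_j}\right] = \prod_{j=1}^m a_j!\, j^{a_j}\, \1[a_j=b_j],
$$
and since the traces $\Tr U^j$ are bounded by $N$, it suffices to show that for every pair of tuples $(a_1,\dots,a_m),(b_1,\dots,b_m)$ of nonnegative integers,
$$
\lim_{N\to\infty} \E\left[\prod_{j=1}^m (\Tr U^j)^{a_j}\, \overline{(\Tr U^j)}^{\,b_j}\right] = \prod_{j=1}^m a_j!\, j^{a_j}\, \1[a_j=b_j].
$$

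The key algebraic input is that $\Tr U^j$ equals the power-sum symmetric function $p_j$ evaluated at the eigenvalues of $U$, and that $\overline{\Tr U^j} = \Tr U^{-j}$ since $U$ is unitary. Via the Frobenius formula $p_\mu = \sum_{\lambda\vdash|\mu|} \chi^\lambda_\mu\, s_\lambda$, both $\prod_j (\Tr U^j)^{a_j}$ and its complex conjugate expand as linear combinations of Schur polynomials, with coefficients given by irreducible characters $\chi^\lambda_\mu$ of the symmetric group $S_n$ at the conjugacy class of cycle type $\mu = (1^{a_1}\cdots m^{a_m})$. The second ingredient is the orthonormality of Schur functions under the Haar measure: whenever $\ell(\lambda),\ell(\mu)\leq N$,
$$
\int_{U(N)} s_\lambda(U)\, \overline{s_\mu(U)}\, dU = \delta_{\lambda\mu}.
$$
Substituting the Frobenius expansions and collapsing via this orthogonality reduces the mixed moment to a single sum
$$
\E\left[\prod_{j=1}^m (\Tr U^j)^{a_j}\, \overline{(\Tr U^j)}^{\,b_j}\right] = \sum_{\lambda} \chi^\lambda_\mu\, \chi^\lambda_\nu,
$$
where $\nu = (1^{b_1}\cdots m^{b_m})$. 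By the second character-orthogonality relation on $S_n$, this vanishes unless $\mu = \nu$, i.e.\ $a_j = b_j$ for every $j$, in which case it equals the centralizer size $\prod_j a_j!\, j^{a_j}$, matching the Gaussian mixed moment exactly.

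The main obstacle, and the only genuinely $N$-dependent point, is the length constraint $\ell(\lambda)\leq N$ in Schur orthogonality: $s_\lambda$ vanishes identically on $U(N)$ when $\ell(\lambda) > N$, so the exact identity above is only valid once $N \geq \max\bigl(\sum_j j\, a_j,\,\sum_j j\, b_j\bigr)$. For any fixed tuples $(a_j),(b_j)$ this holds for all $N$ large enough, so in fact the \emph{exact} finite-$N$ formula holds eventually, and passing to the limit is trivial. The residual bookkeeping is to verify that $\prod_j a_j!\, j^{a_j}$ is indeed the mixed moment of a family of independent $\mathscr{N}_\C(0, j)$ variables with the prescribed multiplicities, which is a direct Gaussian computation.
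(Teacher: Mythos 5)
The paper does not actually prove this statement: it is quoted from Diaconis--Shahshahani \cite{DiaconisShahshahani} purely as a point of comparison for Theorem \ref{StarfreeCLT_joint}, so there is no in-paper argument to measure yours against. Your proof is correct, and it is essentially the classical character-theoretic argument (the original Diaconis--Shahshahani approach, in the streamlined form later emphasized by Diaconis and Evans): the Frobenius expansion $p_\mu=\sum_{\lambda\vdash|\mu|}\chi^\lambda_\mu s_\lambda$, Schur orthogonality on $U(N)$ for $\ell(\lambda)\le N$, and column orthogonality of the $S_n$ character table yield the \emph{exact} finite-$N$ moment identity as soon as $N\ge\max\bigl(\sum_j j a_j,\sum_j j b_j\bigr)$, after which the method of moments (legitimate because the multivariate complex Gaussian limit is determined by its moments) finishes the job. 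You correctly isolate the only genuinely $N$-dependent issue, namely the vanishing of $s_\lambda$ on $U(N)$ when $\ell(\lambda)>N$, and the centralizer size $\prod_j a_j!\,j^{a_j}$ does match the Gaussian mixed moment under the paper's convention that ``variance $j$'' means $\E|Z_j|^2=j$. It is worth noting the contrast with the paper's own toolkit: the genus expansion of Section \ref{SurfacesSection} is a Wick/Gaussian phenomenon and does not apply to Haar unitaries, for which the analogous combinatorial device is the Weingarten calculus; this is why the paper cites the result (and points to \cite{MingoSpeicherCUE} and \cite{Radulescu} for word generalizations) rather than deriving it from its own machinery. One cosmetic remark: the boundedness $|\Tr U^j|\le N$ only guarantees that the finite-$N$ moments exist; the substantive justification for the method of moments is moment-determinacy of the Gaussian limit, which you do invoke.
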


A generalization of the Diaconis-Shahshahani limit theorem to words of CUE matrices can be found in \cite{MingoSpeicherCUE} (Corollary 3.14) and \cite{Radulescu}. \medskip

The proofs of these theorems rely on a {\em genus expansion} formula for a $*$-word $w$, from which we derive a central limit theorem for the trace of $G_w$.

\begin{theorem}
\label{thm:clt*}
For every $*$-word $w$, there are integers $a_w,b_w,c_w$ such that
\[
\Tr(G_w) - a_w\cdot N \distconv X+iY,
\]
where $X,Y$ are two independent real centered Gaussian variables with variances $\frac{b_w+c_w}{2}$ and $\frac{b_w-c_w}{2}$  respectively.
\end{theorem}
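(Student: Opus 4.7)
The plan is to apply the method of moments combined with the genus-expansion formula for $*$-words. The limit $Z=X+iY$ is a centered complex Gaussian, so by Wick's theorem its law is determined by $\E[Z\overline Z]=b_w$ and $\E[Z^2]=c_w$; it therefore suffices to show that, as $N\to\infty$,
\[
M^{(N)}_{p,q}:=\E\!\left[(\Tr G_w - a_w N)^p\,(\Tr G_w^* - a_w N)^q\right] \longrightarrow \E[Z^p\overline Z^q]
\]
for all integers $p,q\geq 0$, where $a_w \in \Z$ (so $\overline{a_w}=a_w$).

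First I would expand $\E[\prod_j\Tr(G_w)\prod_k\Tr(G_w^*)]$ by Wick's formula as a sum over admissible pairings $\pi$ matching each $G_i$-letter with a $G_i^*$-letter of the same index. Each $\pi$ produces a ribbon graph on the disjoint union of the $p+q$ trace loops and contributes $N^{\chi(\pi)}$, where $\chi(\pi)$ is the Euler characteristic of the associated surface. The integer $a_w$ is then defined as the count of planar admissible pairings of a single copy of $w$ with the maximal number of faces, so that $\E[\Tr G_w] = a_w N+O(N^{-1})$. The integers $b_w$ and $c_w$ are the analogous planar counts in the leading-order terms of $\E[\Tr G_w\,\Tr G_w^*]-|a_wN|^2$ and $\E[(\Tr G_w)^2]-(a_wN)^2$ respectively, corresponding to planar pairings that connect the two trace loops rather than decoupling them.

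The essential step is a topological dichotomy for the higher moments: a pairing $\pi$ contributes to the limit of $M^{(N)}_{p,q}$ only if every connected component of its ribbon graph touches at most two of the $p+q$ trace loops. Components touching a single loop are exactly killed by the centering $\Tr G_w-a_wN$; components touching two loops are precisely those counted by $b_w$ (one holomorphic, one antiholomorphic) and by $c_w$ (two holomorphic, accounting for the pseudo-covariance); and components touching $k\geq 3$ loops contribute at most $N^{2-2g-k}=O(N^{-(k-2)})$ by the standard genus inequality, hence vanish in the limit. Summing over all matchings of pair-connections between the traces then recovers exactly the Wick expansion of $Z$, yielding the desired convergence.

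The main obstacle will be the topological bookkeeping in the last paragraph: one must justify the negative-power-of-$N$ bound whenever three or more trace loops are glued into a single connected component, and verify that after centering the surviving contributions factorize precisely as the Wick expansion of a complex Gaussian with covariance $b_w$ and pseudo-covariance $c_w$. These facts should follow from the genus-expansion machinery developed earlier in the paper, but the identification of $a_w$, $b_w$ and $c_w$ as topological invariants requires careful case analysis distinguishing purely holomorphic from mixed pair-connections between trace loops.
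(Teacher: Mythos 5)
Your proposal follows essentially the same route as the paper: centered mixed moments computed by genus expansion, a centering/inclusion--exclusion step that removes the single-face spherical components (the paper's Lemma \ref{lemma:centralized}), and the genus inequality forcing the surviving leading-order pairings to decompose into two-face spheres counted by $b_w$ and $c_w$ (Lemma \ref{lemma:diatomic}), which then matches the Wick expansion of a complex Gaussian with covariance $b_w$ and pseudo-covariance $c_w$. Two small imprecisions to fix when writing this up: the centering kills only the \emph{spherical} single-loop components, while higher-genus single-loop components survive the centering and must instead be absorbed into the same $O(N^{-1})$ genus bound you use for components meeting three or more loops; and when $a_w\neq 0$ the quantity $\E[\Tr G_w\,\Tr G_w^*]-|a_wN|^2$ differs from the variance of the centered trace by order-one cross terms coming from the $O(N^{-1})$ correction to $\E[\Tr G_w]$, so $b_w$ and $c_w$ should be defined directly as counts of connected genus-zero pairings of two labelled faces, as you do parenthetically, rather than as leading terms of those uncentered differences.
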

The integers $a_w,b_w,c_w$ count the number of {\em spheres} that can be obtained by an admissible pairing of one or two polygonal faces whose edges are labelled by the letters of $w$ and $w^*$, as explained in Section \ref{CLTsection}. \medskip
 
A related line of research was explored in the context of free probability. The concept of second-order freeness was first introduced by Mingo and Speicher \cites{MingoSpeicher1,MingoSpeicher2}, and then applied to several ensembles of random matrices. For instance, a similar central limit theorem for words of GUE matrices can be found in \cite{MingoSpeicher2} (Theorem 3.1). In addition, Redelmeier introduced its real and quaternionic analogs, and appropriate notions of second-order freeness have been established for Wishart matrices \cite{RedelmeierWishart}, real Ginibre matrices \cite{Redelmeier}, and quaternionic Gaussian matrices \cite{RedelmeierQGE}.
\medskip

We also extend these results to words of general non-Hermitian matrices with i.i.d.\! entries, band matrices and sparse matrices. Hermitian band matrices have been studied notably by Au \cites{Au2018, Au2019} and Male \cite{Male2017} using the tools of traffic probability. This approach establishes that band and sparse Wigner matrices, under some conditions, have the same statistics as Gaussian matrices. In the non-Hermitian case, we show that if the second moment and the fourth moment of the distribution of the entries matches that of the Gaussian distribution then an approximate genus expansion formula holds (Theorem \ref{thm:fourth_moment_extension}). This implies that all our results concerning the empirical measure of the squared singular values, the mixed-moments and the fluctuations of the eigenvalues apply. In addition, we consider $*$-words of complex Gaussian band matrices, where we prove a CLT for their trace (Theorem \ref{band_CLT}), generalizing one of Jana's recent results \cite{Jana_band}. Finally, we show that a weaker version of our results apply even if we consider $*$-words of sparse random matrices with optimal sparsity parameters (Proposition \ref{pro:MG}). \medskip

The remainder of the paper is organized as follows. In Section \ref{SurfacesSection} we present the genus expansion technique for $*$-words, prove Theorem \ref{thm:clt*} and study the combinatorial problem underlying the computation of the parameters appearing in this theorem. Afterwards, in Section \ref{Applications}, we prove our main results concerning singular values, mixed moments and fluctuation of eigenvalues of words of Ginibre matrices. Finally, in Section \ref{extension_section} we extend these results to words of general non-Hermitian matrices with i.i.d.\! entries, band matrices, and sparse matrices. This organization is summarized in Figure \ref{fig:plan_of_paper}.

\begin{figure}[t!]
\begin{center}
\begin{tikzpicture}[scale=.9]
\begin{scope}[thick,decoration={
    markings,
    mark=at position 0.57 with {\arrow{>}}}
    ] 
\node[draw,fill=white,align=center,rounded corners=5] (A1) at (0,0.5) {Genus expansion formula\\Theorem \ref{genusexpansion}};
\node[draw,fill=white,align=center,rounded corners=5] (B1) at (2,-1.2) {Centered genus expansion\\Lemmas 
\ref{lemma:centralized},\ref{lemma:diatomic}};
\node[draw,fill=white,align=center,rounded corners=5] (C1) at (-4,-3) {Genus expansion for a single $*$-word,\\ Equation (\ref{first_order}) in
Theorem \ref{abcthm}};
\draw[postaction={decorate}] (1.8,-2.3)--(-1,-5);
\node[draw,fill=white,align=center,rounded corners=5] (C2) at (4,-3) {CLT for $\Tr(G_w)$\\  Equation (\ref{second_order}) in Theorem \ref{abcthm}};
\node[draw,fill=white,align=center,rounded corners=5] (D1) at (-6.1,-5) {Mixed moments\\Theorem \ref{thm:mixed}};
\node[draw,fill=white,align=center,rounded corners=5] (D2) at (-2.1,-5) {Fuss-Catalan limit\\ Theorem \ref{thm:mainFC}};
\node[draw,fill=white,align=center,rounded corners=5] (D3) at (2,-5) {Coperiod\\
Proposition \ref{StarfreeCLT}};
\node[draw,fill=white,align=center,rounded corners=5] (D4) at (6.7,-5) {Asymptotic independence\\
Proposition \ref{independence}};
\draw[postaction={decorate}] (A1) -- (C1);
\draw[postaction={decorate}] (A1)--(B1);
\draw[postaction={decorate}] (B1)--(C2);
\draw[postaction={decorate}] (C1)--(D1);
\draw[postaction={decorate},color=red] (C1)--(D2);
\draw[postaction={decorate}] (C2)--(D3);
\draw[postaction={decorate}] (C2)--(D4);
\draw[-,color=red,rounded corners=15] (-8.2,-6) rectangle (.1,-2);
\end{scope}
\end{tikzpicture}
\end{center}
   \caption{
   A graphical summary of the results about words of complex Ginibre matrices in the paper. Arrows represent implications. In Section \ref{extension_section} these results are generalized for words of non-Hermitian matrices with i.i.d.\! entries and band matrices, whereas only a (weaker) version of the results circled in red can be extended to words of sparse matrices by our techniques.}
   \label{fig:plan_of_paper}  
\end{figure}
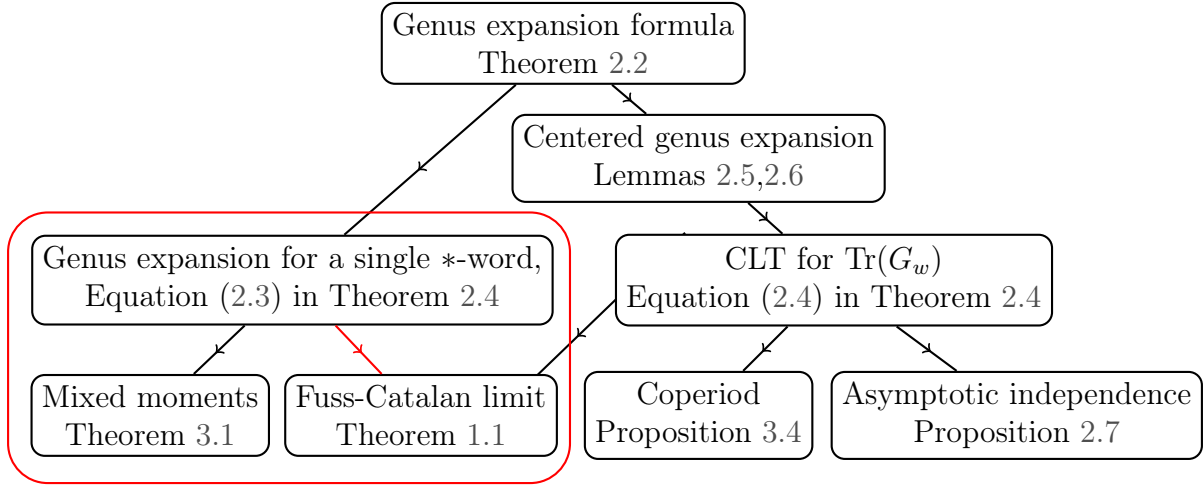

\section{Words of complex Ginibre matrices}\label{SurfacesSection}

\subsection{Notations and conventions}

Complex Ginibre matrices are random matrices with i.i.d.\! standard complex Gaussian coefficients. Throughout this paper $G_1,G_2,...$ are i.i.d complex Ginibre matrices of size $N \times N$, scaled such that every entry has variance $\frac{1}{N}$. \medskip

A $*$-word that involves each $G_i$ and $G_i^*$ the same number of times will be called {\em balanced}. A $*$-word $w$ such that $w=w^*$ will be called {\em $*$-stable}, where $w^*$ is obtained from $w$ by reversing its order and take the conjugate transpose of every letter. To every letter of a $*$-word we associate two parameters: (i) an integer index $j$ associated to $G_j$ and $G_j^*$, and (ii) a \textit{sign} which can take two values: star for $G_j^*$ and non-star for $G_j$. \medskip

The two-dimensional sphere (of genus $0$) will be denoted by $\S$. For every $g \geq 1$, we will denote by $S_g$ the generic compact connected orientable surface of genus $g$ (a torus with $g$ holes). For non-orientable surfaces, which appear in Subsection \ref{polyletters}, we will denote by $N_g$ the connected sum of $g$ real projective planes, such that $N_1 \simeq \Proj$, and $N_2$ is homeomorphic to the Klein bottle.

\subsection{Complex Gaussian Variables and Wick's principle}

A complex centered Gaussian variable $Z \sim \mathscr{N}_{\C}(0, \Sigma)$ can be defined as $X+iY$, where $(X,Y)$ is a real Gaussian vector with covariance matrix $\Sigma$. A case of special interest is when $X$ and $Y$ are independent, i.e., $\Sigma = \frac{\sigma^2}{2} I_2$ for some $\sigma>0$. In such case, the variable $Z$ has the density
$
\frac{1}{\sigma^2 \pi} \exp({-|z|^2/\sigma^2})
$
with respect to the Lebesgue measure on $\C$. We will denote this distribution as $ \mathscr{N}_{\C}(0, \sigma^2) $, and refer to the case $\sigma=1$ as \textit{standard}. In particular, then, $\E Z^2 =0 $ and $\E |Z|^2 =1$. \medskip

A useful feature of the Gaussian distribution, whether real or complex, is Wick's principle, that we now present. Let $S$ be a set of even size $M$. A {\em pairing} $\phi$ of $S$ is a partition of $S$ to subsets $S_1,...,S_{M/2}$ of size 2, and we denote the elements of $S_j$ by $S_j=\{\phi_{j,1},\phi_{j,2}\}$. The set of all pairings of $S$ is denoted by $\Pair(S)$. 

\begin{proposition}[Wick's principle]\label{Wick} Let $(Z_{i})_{i \geq 1}$ be a complex centered Gaussian vector, $Z_{-i}=\overline{Z_i}$, and $(i_k)_{k=1}^M$ is any sequence of non-zero integers in $\Z$, then
$$
\E \lll( \prod_{k=1}^M Z_{i_k} \rr)
=
\sum_{\phi \in \Pair([M])} \prod_{j=1}^{M/2} \E \lll( Z_{i_{\phi_{j,1}}} Z_{i_{\phi_{j,2}}} \rr)
$$
where the sum is taken over every possible pairing of $[M]:=\{1,...,M\}$.

\end{proposition}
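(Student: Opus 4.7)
The plan is to reduce the complex identity to the classical real Wick (Isserlis) theorem by a multilinearity argument. With the convention $Z_{-i}=\overline{Z_i}$, every factor $Z_{i_k}$ appearing in the product is a $\C$-linear combination of the centered real Gaussians $X_j := \Re Z_j$ and $Y_j := \Im Z_j$ for $j\geq 1$. Both sides of the asserted identity are $\C$-multilinear in the tuple $(Z_{i_1},\ldots,Z_{i_M})$: the left-hand side by linearity of $\E$ together with multilinearity of the product, and the right-hand side because the covariance $\E(Z_a Z_b)$ is $\C$-bilinear in $(Z_a,Z_b)$. Hence it suffices to verify the formula in the case where every $Z_{i_k}$ has been replaced by one of the centered real Gaussians $W_k \in \{X_j,Y_j\}_{j\geq 1}$.

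This reduces the problem to showing, for any centered jointly real-Gaussian vector $(W_1,\dots,W_M)$,
\[
\E\prod_{k=1}^M W_k \;=\; \sum_{\phi\in\Pair([M])} \prod_{j=1}^{M/2} \E(W_{\phi_{j,1}}W_{\phi_{j,2}})
\]
when $M$ is even, and that the left-hand side vanishes when $M$ is odd. This is Isserlis's theorem, which I would prove by identifying the coefficient of $t_1 t_2 \cdots t_M$ on both sides of the moment generating function identity
\[
\E\exp\!\Bigl(\sum_{k=1}^M t_k W_k\Bigr) \;=\; \exp\!\Bigl(\tfrac{1}{2}\sum_{k,\ell} t_k t_\ell\, \E(W_k W_\ell)\Bigr);
\]
expanding the right-hand exponential as a power series and collecting the multilinear term produces exactly a sum over pairings of $[M]$, while the odd case is automatic by parity of the left-hand side. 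Alternatively, one can induct on $M$ via the Gaussian integration-by-parts identity $\E(W_1 f(W)) = \sum_\ell \E(W_1 W_\ell)\,\E(\partial_\ell f)$, which peels off one variable at a time and matches it with each possible partner.

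The main, and essentially only, subtlety is the bookkeeping at the end: after applying Isserlis at the level of the real components, one must regroup the resulting sum of pairings of real pieces back into a sum over pairings of $[M]$ weighted by $\E(Z_{i_{\phi_{j,1}}}Z_{i_{\phi_{j,2}}})$. This is automatic by the very same $\C$-bilinearity of the covariance used to justify the reduction: expanding each $\E(Z_a Z_b)$ into covariances of the $X$'s and $Y$'s exactly reverses the real/imaginary expansion performed on the left-hand side, so the two multilinear expansions match term by term. No genuine obstacle appears beyond this routine accounting.
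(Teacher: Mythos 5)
Your argument is correct, but note that the paper itself offers no proof of this proposition: Wick's principle is stated there as a classical fact and used as a black box, so there is no "paper proof" to match against. Your reduction is the standard one and it is sound. The two points worth making explicit are exactly the ones you identify: (1) the convention $Z_{-i}=\overline{Z_i}$ means every factor, conjugated or not, lies in the $\C$-span of the real centered Gaussians $X_j=\Re Z_j$, $Y_j=\Im Z_j$, and the family $\{X_j,Y_j\}_{j\ge 1}$ is jointly real Gaussian precisely because that is what the paper's definition of a complex Gaussian vector requires; and (2) both sides of the identity are $\C$-multilinear in the tuple of factors --- the right-hand side because the pairing sum is built from the non-conjugated second moments $\E(Z_aZ_b)$, which are genuinely $\C$-bilinear since all conjugations have been absorbed into the indexing. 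Given these, verifying the identity on the spanning set $\{X_j,Y_j\}$ is exactly Isserlis's theorem, and either of your two proofs of that (coefficient extraction from the moment generating function, or induction via Gaussian integration by parts) is complete; the odd-$M$ case is handled by the symmetry $W\mapsto -W$ on the left and the emptiness of $\Pair([M])$ on the right. The final "regrouping" step you flag is indeed automatic: it is the same multilinear expansion applied to both sides, so no further bookkeeping is needed. In short, the proposal is a correct, self-contained proof of a statement the paper takes for granted.
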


Clearly, when dealing with i.i.d.\! standard complex Gaussian variables, only pairings that match each letter $Z_i$ with one of its conjugates have a nonzero contribution. Wick's principle is extremely useful, as it enables one to compute the expectation of any product of Gaussian variables in terms of pairwise covariances. It is used extensively in various areas of theoretical physics and probability theory.

\subsection{Genus expansion for complex Ginibre matrices}
Let $w_1, \dots, w_k$ be $*$-words of complex Ginibre matrices. The goal of this subsection is to derive a genus expansion formula for the expectation $$\E\left[\prod_{j=1}^{k}\Tr(G_{w_j})\right].$$

To each $*$-word $w_j$ we associate an oriented polygonal face $f_{w_j}$ with $|w_j|$ edges which are labelled by $w_j$'s letters. The orientation of the face $f_{w_j}$ agrees with the order of $w_j$'s letters. In addition, we endow the edges with an direction, such that an edge associated to a star-free letter $G_r$ will be oriented by an arrow in the direct way around its face, whereas an edge associated to the star-letter $G_r^*$ will be oriented in the opposite direction. This convention matches the effect of transposition of matrices and helps to visualize which pairings are admissible, in the sense defined below. \medskip

A pairing of the edges of all $k$ faces is called {\em admissible} if every two paired edges are labelled by $G_r$ and $G_r^*$ for some $r\ge 1$. In other words, paired edges should have same index and opposite signs (see Figure \ref{Example1}). We denote the set of all admissible pairings by $\AdPair(w_1,\ldots,w_k)$.

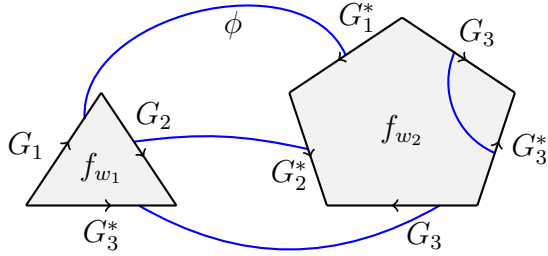
\begin{figure}[ht]
\begin{center}
\begin{tikzpicture}
\begin{scope}[thick,decoration={
    markings,
    mark=at position 0.57 with {\arrow{>}}}
    ] 
\filldraw[fill=black!5!white,draw=white] (-1,-0.5) -- (1,-0.5) -- (0,1) -- (-1,-0.5);
\draw (0,0) node {$f_{w_1}$};
\draw (-1,.3) node {$G_1$};
\draw (.7,.7) node {$G_2$};
\draw (0,-.9) node {$G_3^*$};
\filldraw[fill=black!5!white,draw=white](3,-.5) -- (5,-.5) -- (5.5,1) -- (4,2)-- (2.5,1) -- (3,-.5);
\draw (4,0.5) node {$f_{w_2}$};
\draw (3.4,2) node {$G_1^*$};
\draw (5,1.8) node {$G_3$};
\draw (5.7,0.25) node {$G_3^*$};
\draw (4.3,-.9) node {$G_3$};
\draw (2.5,-.1) node {$G_2^*$};
\draw (1.75,1.9) node {$\phi$};
\draw[color=blue] (5.23,.2) to[bend left=45] (4.7,1.55);
\draw[color=blue] (.5,-.5) to[bend right] (4.5,-.5);
\draw[color=blue] (.445,.35) to[bend left=10] (2.75,.25);
\draw[color=blue] (-.22,.65) to[bend left=80] (3.25,1.5);
\draw[postaction={decorate}] (-1,-0.5) -- (1,-0.5);
\draw[postaction={decorate}] (0,1) -- (1,-0.5);
\draw[postaction={decorate}] (-1,-0.5) -- (0,1);
\draw[postaction={decorate}] (5,-.5) -- (3,-.5);
\draw[postaction={decorate}] (5,-.5) -- (5.5,1);
\draw[postaction={decorate}] (4,2) -- (5.5,1);
\draw[postaction={decorate}] (4,2)-- (2.5,1);
\draw[postaction={decorate}] (2.5,1) -- (3,-.5);
\end{scope}
\end{tikzpicture}
\end{center}
\caption{Faces and edges associated to a product of traces, together with an admissible pairing $\phi$ represented as blue lines. $w_1= G_1 G_2 G_3^*$, $w_2= G_1^* G_3 G_3^* G_3 G_2^*$.}
\label{Example1}
\end{figure}

For an admissible pairing $\phi\in\AdPair(w_1,\ldots,w_k)$, let $S_{\phi}$ be the surface constructed from the faces $f_{w_1},...,f_{w_k}$  by identifying the edges paired by $\phi$ coherently with the orientation. By construction, $S_{\phi}$ is an oriented compact surface. In particular, every connected component of $S_{\phi}$ is a connected surface that is characterized by its genus. We denote by $c(S_\phi)$ the number of connected components of $S_{\phi}$, and $g(S_\phi) = \sum_{i=1}^{c(S_{\phi})} g_i$ the sum of the genera of its components. We also denote by $V(S_\phi),E(S_\phi),F(S_\phi)$ the number of vertices, edges and faces of the graph $\Gamma_{\phi}$ on the surface $S_{\phi}$ obtained by identifying the edges of $f_{w_1},...,f_{w_k}$ according to $\phi$. \medskip

\begin{theorem}[Genus expansion]\label{genusexpansion}
Let $w_1, \dots, w_k$ be $*$-words and $N$ an integer. Then,
$$
\E \left( \prod_{i=1}^k \Tr( G_{w_i}) \right) = \sum_{\phi \in \AdPair(w_1, \dots, w_k)} {N^{2c({S_\phi}) -k - 2 g (S_\phi)}}
$$

\end{theorem}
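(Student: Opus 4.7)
The plan is to expand the traces entry-wise, apply Wick's principle to the resulting Gaussian moments, and interpret the surviving sum geometrically via Euler's formula.

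First I would unfold each trace as
$$\Tr(G_{w_i}) = \sum_{j_1, \ldots, j_{|w_i|} = 1}^N \prod_{\ell=1}^{|w_i|} \bigl(G_{w_i}^{(\ell)}\bigr)_{j_\ell,\, j_{\ell+1}},$$
with cyclic convention $j_{|w_i|+1}=j_1$ and where $G_{w_i}^{(\ell)}$ is the $\ell$-th letter of $w_i$ (either some $G_r$ or some $G_r^*$). Reading the indices $j_1,\ldots,j_{|w_i|}$ as labels of the vertices of the polygonal face $f_{w_i}$, each factor sits on an edge. The identity $(G_r^*)_{ab}=\overline{(G_r)_{ba}}$ rewrites every factor as an entry of an unstarred $G_r$; the orientation convention for the edges is chosen precisely so that, after this rewriting, the arrow tail carries the row index and the arrow head the column index of the underlying $G_r$ entry.

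Next I would take the product over $i=1,\ldots,k$, swap $\E$ and the sum, and apply Wick's principle (Proposition \ref{Wick}). Since $\E[(G_r)_{ab}(G_r)_{cd}]=0$ and entries of distinct Ginibre matrices are independent, only pairings that match each $G_r$-entry with a $G_r^*$-entry contribute, which is exactly the set $\AdPair(w_1,\ldots,w_k)$. For any such pairing each pair contributes
$$\E\bigl[(G_r)_{ab}\,\overline{(G_r)_{cd}}\bigr] = \tfrac{1}{N}\,\delta_{ac}\,\delta_{bd},$$
whose Kronecker deltas identify row-with-row and column-with-column, i.e.\ tail-with-tail and head-with-head of the two arrows. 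This is exactly the coherent oriented identification that defines the surface $S_\phi$.

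Finally I would interpret the leftover index summation geometrically. After imposing the $E(S_\phi)$ pairs of Kronecker deltas, the free indices are constant along equivalence classes of polygon vertices under the identifications coming from $\phi$, i.e.\ along the vertices of the graph $\Gamma_\phi\subset S_\phi$; hence the sum over indices contributes $N^{V(S_\phi)}$, while the Gaussian covariances contribute $N^{-E(S_\phi)}$. The faces of $\Gamma_\phi$ are precisely $f_{w_1},\ldots,f_{w_k}$, so $F(S_\phi)=k$, and Euler's formula applied to each connected component of $S_\phi$ yields
$$V(S_\phi) - E(S_\phi) + k = 2c(S_\phi) - 2g(S_\phi).$$
The contribution of $\phi$ is therefore $N^{V(S_\phi) - E(S_\phi)} = N^{2c(S_\phi)-k-2g(S_\phi)}$, and summing over $\phi$ gives the claimed formula. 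The main obstacle is the orientation bookkeeping: one must check that the admissibility condition and the oriented identification of edges are exactly the conditions which (i) single out the pairings with nonzero Gaussian contribution, and (ii) produce a CW decomposition of an oriented surface whose $k$ faces are the original polygons. Once these conventions are in place, the argument reduces to a standard combination of Wick's principle and the Euler relation.
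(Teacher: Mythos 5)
Your proof is correct, but it takes the route the paper explicitly sets aside in the sentence preceding its own argument: you run Wick's principle (Proposition \ref{Wick}) on the entrywise expansion of the traces, whereas the paper gives a direct moment computation. Concretely, the paper evaluates $\E[G(I)]$ for each balanced indexation $I$ in closed form as $N^{-m/2}\prod_{v}m_v!$ (using $\E|Z|^{2p}=p!$ for a standard complex Gaussian) and then proves the double-counting identity $\sum_{I}\prod_{v}m_v! = \sum_{\phi}N^{V(S_\phi)}$ by observing that each balanced indexation is compatible with exactly $\prod_v m_v!$ admissible pairings, while each admissible pairing admits exactly $N^{V(S_\phi)}$ compatible indexations. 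In your version the factor $\prod_v m_v!$ never surfaces, because Wick's sum over pairings of \emph{positions} absorbs it: the $m_v!$ internal matchings of repeated entries are counted as distinct admissible pairings from the start. Both arguments end with the identical Euler-characteristic step. What the paper's organization buys is the vocabulary of balanced indexations, multiplicities $m_v$, and compatibility, which is reused almost verbatim in Section \ref{extension_section}, where the entries are no longer Gaussian and Wick's principle is unavailable (Proposition \ref{pro:MG} and Theorem \ref{thm:fourth_moment_extension} rework exactly these objects under moment-matching hypotheses). Your version is shorter and more standard; its only soft spot, which you flag yourself, is the unproved assertion that the head-to-head, tail-to-tail identifications forced by the Kronecker deltas assemble into an oriented closed surface having the $k$ polygons as faces --- but the paper asserts this at the same level of detail, so it is not a gap relative to the paper's own standard.
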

This follows from Wick's formula in all generality; We give below a direct proof in order to introduce notations and ideas that are needed later in the paper.
\begin{proof}
Let $\mathscr{V}=\{(G_r)_{s,t}~:~r\ge 1,1\le s,t \le N\}$, $m_j$ denote the length of the $*$-word $w_j$ for $j=1,...,k$, $m:=m_1+\cdots + m_k$ and  \[\mathscr{I}=[N]^{m_1}\times[N]^{m_2}\times\cdots \times[N]^{m_k}\] be the set of {\em indexations}. 
For every indexation $I=(i_{j,l})_{1\le j \le k,1\le l\le m_j}\in\mathscr{I}$ we denote the product 
\[
G(I):= \prod_{j=1}^{k}\prod_{l=1}^{m_j} G_{i_{j,l},i_{j,l+1}}^{(j,l)}
\]
where $G^{(j,l)}$ is the $l$-th letter of the $*$-word $w_j$, and we use the circular convention $i_{j,m_j+1}=i_{j,1}$. 

Given an indexation $I$, let $\mathscr{V}(I)$ consist of all the tuples $v=(r,s,t)$, where $r\ge 1$ and $1\le s,t \le N$, such that the Gaussian variable $(G_r)_{s,t}\in \mathscr{V}$ is one of the terms in the product $G(I)$. We call an indexation {\em balanced} if for every $v=(r,s,t)\in\mathscr V(I)$ the variable $(G_r)_{s,t}\in \mathscr{V}$ appears in $G(I)$ the exact same number of times as its conjugate $(G_r^*)_{t,s}$. In such case, we refer to this number as the {\em multiplicity} of $v$ and denote it by $m_v$. Clearly, $\E[G(I)]= 0 $ if $I$ is not balanced, and otherwise, by our normalization of the Gaussian variables, $\E[G(I)]=N^{-\frac{m}2}\prod_{v\in\mathscr{V}(I)} m_v!$. \medskip

Due to the fact that $\prod_{j=1}^{k}\Tr(G_{w_j})=\sum_{I\in\mathscr{I}}G(I)$, we derive the theorem by finding a combinatorial meaning for the sum $\sum_{I\in\mathscr{I}} \prod_{v\in\mathscr{V}(I)} m_v!$. 

Namely, we say that an indexation $I\in\mathscr{I}$ is {\em compatible} with an admissible pairing $\phi\in\AdPair(w_1,...,w_k)$ if for every two paired letters $G^{(j,l)}$ and $G^{(j',l')}$ we have that 
\[
i_{j,l} = i_{j',l'+1} \mbox{ and } i_{j,l+1} = i_{j',l'}.
\]
In other words, the indexation respects the gluing of the edges according to $\phi$. Equivalently, the indexation consistently induces a labeling of the vertices of the surface $S_{\phi}$ where each vertex has a label in $\{1,...,N\}$. In consequence, there are exactly $N^{V(\phi)}$ balanced indexations that are compatible with any admissible pairing $\phi$.
On the other hand, in order to construct an admissible pairing $\phi$ that a given balanced indexation $I$ is compatible with, we need to choose, for every $v=(r,s,t)\in\mathscr{V}(I)$ a matching between the $m_v$ copies of $(G_r)_{s,t}$ and $(G_r^*)_{t,s}$ in $G(I).$
Therefore, 
\[
\sum_{I\in\mathscr{I}_B}\prod_{v\in\mathscr{V}(I)} m_v! 
~=
\sum_{\phi\in\AdPair(w_1,...,w_k)} N^{V(S_{\phi})},
\]
where $\mathscr{I}_B$ denotes the subset of balanced indexations. Consequently,
\[
\E \left( \prod_{i=1}^k \Tr( G_{w_i}) \right) = 
\sum_{I\in\mathscr{I}_B}\E[G(I)]=
\sum_{\phi\in\AdPair(w_1,...,w_k)} N^{V(S_{\phi})-m/2}.
\]

It clearly holds that $E(S_{\phi})=m/2$ and $F(S_{\phi})=k$ for every admissible pairing $\phi$, and the proof is concluded by recalling that  the Euler's characteristic of $S_{\phi}$ is equal to
\begin{equation*}
V(S_{\phi}) - E(S_{\phi}) +F(S_{\phi})= 2 c (S_{\phi}) -2g (S_{\phi}).
 \end{equation*}
 which follows by summing the Euler characteristic $\chi = 2-g$ over all connected components of $S_{\phi}$.
 \end{proof}

\subsubsection{Examples.} Here are two straightforward examples of Theorem \ref{genusexpansion}.

\begin{itemize}
\item \textbf{Moments of $\Tr\left( G \right)$.} Entries of $G$ are independent and $\mathscr{N}_{\C}(0,1/N)$-distributed, hence
$$
Z := \Tr\left( G \right) = \sum_{i=1}^N G_{ii} \sim \mathscr{N}_{\C}(0,1).
$$
Therefore, the mixed moments of $Z$ are
$
\E Z^k \overline{Z}^l = k! \delta_{k,l}.
$
We recover this elementary fact from Theorem \ref{genusexpansion} with $w_1= \dots = w_k = G$ and $w_{k+1}= \dots = w_{k+l} = G^*$. There is no admissible pairing if $k \neq l$, and if  $k=l$ there are exactly $k!$ admissible pairings between the $k$ monogons with direct orientation (associated with the $G$'s) and the $k$ monogons with indirect orientation (associated with the $G^*$'s). The surface $S_\phi$ that any such pairing $\phi$ yields is a disjoint union of $k$ spheres, hence $2c(S_\phi)-2k-g(S_\phi)=0$, and the result follows. \medskip

\item \textbf{Moments of $\Tr\left( GG^* \right)$ and random permutations.} By a direct computation,
$$
X := \Tr\left( G G^* \right) = \sum_{i,j} |G_{ij}|^2 \sim \frac{1}{N} \gamma_{N^2},
$$
where $\gamma_{N^2}$ stands for a Gamma variable with parameter $N^2$. It follows from the definition of the Gamma density that the moments of $X$ are given by the following product:
\begin{equation}
\label{eqn:gammaN2}
\E X^k
= \frac{1}{N^k} \frac{\Gamma(N^2 + k)}{\Gamma(N^2)}
= \frac{1}{N^k}
(N^2+k-1) \cdots (N^2+1) N^2.
\end{equation}
On the other hand, from the topological perspective of Theorem \ref{genusexpansion}, the $k$-th moment of $X$ can be computed by summing over the pairings of $k$ alternate `bi-gons' (faces with two edges labelled by $G$ and $G^*$). The $G^*$-edge of a bi-gon can be paired either with the $G$-edge of the same bi-gon, yielding a sphere component, or with the $G$-edge of another bi-gon, yielding a new bi-gon. The surface obtained in the end consists only of sphere components. This makes the combinatorics quite straight\-forward, as a pairing is equivalent to a permutation $\sigma$ in $\mathfrak{S}_k$ defined as follows : $\sigma(i)=j$ if and only if the $G^*$-edge of the bi-gon number $i$ is paired with the $G$-edge of the bi-gon number $j$. The number of sphere components in the resulting surface is the number of cycles of $\sigma$, that we denote $c(\sigma)$. A pairing that yields $c$ spheres will have contribution $N^{2c - k }$ in the genus expansion, so that we can write:
\begin{equation}\label{cycles}
\E X^k =
\sum_{\sigma \in \mathfrak{S}_k} N^{2 c(\sigma) - k}.
\end{equation}
We note that (\ref{eqn:gammaN2}) is equal to (\ref{cycles}) by the fact that the number of cycles in a random permutation chosen uniformly from $\mathfrak{S}_k$ is distributed like the sum $B_1 + \cdots + B_k$ of independent Bernoulli variables, $B_j$ having parameter $1/j$. 

\end{itemize}

\subsection{Limit theorem for the trace of a $*$-word}\label{CLTsection}
We now state and prove a limit theorem for the random variable $\Tr(G_w)$, where $w$ is any $*$-word. The limit is a complex Gaussian random variable whose parameters depend on combinatorial parameters of the $*$-word that arise by the topological genus expansion.  We set the following definitions.
\begin{definition}
Let $w, w_1,...,w_k$ be  $*$-words.
 \begin{enumerate}
\item  We say that a pairing $\phi\in\AdPair(w_1,...,w_k)$ is {\em spherical } if $S_{\phi}$ is homeomorphic to $\S$.

\item We say that $\phi \in \AdPair(w_1, \dots ,w_k)$ contains an {\em atom} if for some $1\le i \le k$, the edges of the polygonal face $f_{w_i}$ are paired within themselves and the corresponding component in $S_{\phi}$ is homeomorphic to $\S$. In such case we also say that $\phi$ induces an atom on $f_{w_i}$.

\item $\phi$ is called atom-free if it does not contain an atom, and $\AdPair_{AF}(w_1,...,w_k)$ denotes the set of atom-free admissible pairings.

\end{enumerate}
\end{definition}

We denote by $a_w,b_w$ and $c_w$  the number of spherical pairings in $\AdPair(w),\AdPair(w,w^*)$ and $\AdPair(w,w)$ respectively. It is easy to see that $a_w=a_{w^*},b_w=b_{w^*}, c_w=c_{w^*}$ and $b_w \geq 1$. 

Theorem \ref{abcthm} below is a precise statement of the main Theorem \ref{thm:clt*}. A notable consequence is that $b_w \geq c_w$ for any word $w$ -- a fact that is not obvious, but certainly can be established by purely topological considerations.

\begin{theorem}\label{abcthm}
For every $*$-word $w$ there holds 
\begin{equation}\label{first_order}
    \E \lll( \Tr (G_w) \rr) = a_w N + \underset{N \rightarrow \infty}{O} \lll(\frac{1}{N}\rr)
\end{equation}
and 
\begin{equation}\label{second_order}
    \Tr (G_w) - a_w N \distconv X+iY,
\end{equation}
where $X,Y$ are two independent real centered Gaussian variables with variances $\frac{b_w+c_w}{2}$ and $\frac{b_w-c_w}{2}$ respectively.
\end{theorem}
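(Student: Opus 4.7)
The plan is to combine the genus expansion of Theorem \ref{genusexpansion} with the moments method, the crucial intermediate step being a \emph{centralized} version of that expansion in which all atomic contributions have been removed. For \eqref{first_order}, I apply Theorem \ref{genusexpansion} with $k = 1$: a single-face admissible pairing always produces a connected surface, so $c(S_\phi) = 1$ and the exponent $1 - 2g(S_\phi)$ lies in $\{1, -1, -3, \dots\}$. The terms with $g=0$ are precisely the $a_w$ spherical pairings of $w$, and all remaining terms are $O(N^{-1})$, giving the claimed first-order asymptotics.

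For \eqref{second_order}, using $\overline{\Tr G_w} = \Tr G_{w^*}$, it suffices to prove that for every $p, q \geq 0$,
\[
m_{p,q} := \E\left[(\Tr G_w - a_w N)^p (\Tr G_{w^*} - a_w N)^q\right] \longrightarrow \E[Z^p \bar Z^q],
\]
where $Z = X + iY$ is the variable in the statement. The first key step is a centralized genus expansion of the form
\[
\E\left[\prod_{j=1}^k (\Tr G_{w_j} - a_{w_j} N)\right] = \sum_{\phi \in \AdPair_{AF}(w_1, \ldots, w_k)} N^{2c(S_\phi) - k - 2g(S_\phi)}.
\]
I would prove this by inclusion-exclusion on the set of atomic faces: each admissible pairing factors uniquely as a choice of atom-set $A \subseteq [k]$ (each atom on $f_{w_j}$ contributing a factor $a_{w_j} N$) together with an atom-free pairing of the remaining faces, and the alternating signs arising from the centering collapse the double sum to the $A = \emptyset$ term.

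Specializing to $(w_1, \ldots, w_{p+q}) = (w, \ldots, w, w^*, \ldots, w^*)$ with $p$ and $q$ copies respectively, I decompose each surface $S_\phi$ into its connected components, writing the total exponent as $\sum_i (2 - k_i - 2g_i)$ where $k_i$ counts the faces and $g_i$ the genus of the $i$-th component. The atom-free condition forces $k_i \geq 2$, so every summand is non-positive with equality exactly when $(k_i, g_i) = (2, 0)$. Thus in the $N \to \infty$ limit only ``diatomic'' pairings survive --- those partitioning the $p+q$ faces into pairs, each of which glues into a sphere. In particular $m_{p,q} \to 0$ if $p+q$ is odd, and otherwise
\[
\lim_{N \to \infty} m_{p,q} = \sum_{\mu} b_w^{\mathrm{cross}(\mu)} c_w^{\mathrm{same}(\mu)},
\]
the sum being over perfect matchings $\mu$ of $[p+q]$ (using $c_w = c_{w^*}$ to identify same-side pairs among the $w^*$-copies).

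The final step is to identify this limit with $\E[Z^p \bar Z^q]$. A direct computation yields $\E[Z^2] = \E[\bar Z^2] = c_w$ and $\E[|Z|^2] = b_w$, so Wick's principle applied to the real Gaussian vector $(X, Y)$ produces exactly the same sum over pairings. Since a complex Gaussian law is determined by its moments, convergence of all $m_{p,q}$ to those of $Z$ yields the claimed convergence in distribution; as a byproduct, the non-negativity $\sigma_Y^2 = (b_w - c_w)/2 \geq 0$ forces $b_w \geq c_w$. The main technical point is the centralized expansion: while formally an inclusion-exclusion, it requires a careful atom/atom-free factorization of admissible pairings, precisely the content of Lemmas \ref{lemma:centralized}--\ref{lemma:diatomic}; the Euler-characteristic bound on atom-free components and its equality case are what single out the diatomic surviving configurations.
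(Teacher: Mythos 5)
Your proposal follows essentially the same route as the paper: the first-order claim from the genus expansion with $k=1$, the centralized expansion via inclusion--exclusion over atoms (the paper's Lemma \ref{lemma:centralized}), the Euler-characteristic bound isolating the bi-atomic pairings (Lemma \ref{lemma:diatomic}), and the identification of the limiting matching sum with the Wick moments of $Z=X+iY$ using $\E Z\bar Z = b_w$ and $\E Z^2 = c_w$. One small imprecision: atom-freeness does not force $k_i\ge 2$ for every component, it only forces $g_i\ge 1$ when $k_i=1$; that case still gives $2-k_i-2g_i\le -1$, so your equality analysis and conclusion are unaffected.
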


The proof is carried out by the method of moments. In order to apply it we need a centralized version of Theorem \ref{genusexpansion}.

\begin{lemma}
\label{lemma:centralized}
Let $w_1,...,w_k$ be $*$-words and $N$ an integer. Then,
\[
\E \left( \prod_{i=1}^k \lll( \Tr( G_{w_i})-a_{w_i}N \rr) \right) = 
\sum_{\phi\in\AdPair_{AF}(w_1,...,w_k)}N^{2c(S_\phi)-k-2g(S_{\phi})}.
\]
\end{lemma}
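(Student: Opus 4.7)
The plan is to derive this lemma from Theorem~\ref{genusexpansion} by classifying admissible pairings according to their atom set and then matching coefficients via inclusion--exclusion. Expanding the left-hand side by multilinearity,
\[
\E\Bigl(\prod_{i=1}^k (\Tr(G_{w_i}) - a_{w_i}N)\Bigr) = \sum_{T \subseteq [k]} (-N)^{|T|}\Bigl(\prod_{i \in T} a_{w_i}\Bigr)\, \E\Bigl(\prod_{i \notin T} \Tr(G_{w_i})\Bigr),
\]
so the task reduces to expressing each mixed expectation $\E\prod_{i \in S}\Tr(G_{w_i})$ (for $S = [k]\setminus T$) in a form compatible with the atom-free objects that should appear on the right-hand side.

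The key combinatorial observation is that any $\phi \in \AdPair((w_i)_{i\in S})$ factors uniquely as follows. Let $A(\phi) \subseteq S$ denote the set of faces on which $\phi$ induces an atom. Since an atom is, by definition, a complete self-pairing of a single face closing up into a sphere, the restriction of $\phi$ to the faces in $A(\phi)$ decouples from its restriction to the complement $S \setminus A(\phi)$: the former amounts to choosing, for each $i \in A(\phi)$, one of the $a_{w_i}$ spherical pairings of $f_{w_i}$, while the latter is an atom-free admissible pairing $\phi' \in \AdPair_{AF}((w_i)_{i\in S\setminus A(\phi)})$. Because $S_\phi$ is then the disjoint union of $|A(\phi)|$ spheres and the surface $S_{\phi'}$, the exponent in Theorem~\ref{genusexpansion} splits additively,
\[
2c(S_\phi)-|S|-2g(S_\phi) = |A(\phi)| + \bigl(2c(S_{\phi'}) - (|S|-|A(\phi)|) - 2g(S_{\phi'})\bigr),
\]
and the genus expansion yields
\[
\E\prod_{i \in S}\Tr(G_{w_i}) = \sum_{A \subseteq S} N^{|A|}\Bigl(\prod_{i \in A} a_{w_i}\Bigr)\, F_{S\setminus A},
\]
where $F_R := \sum_{\phi' \in \AdPair_{AF}((w_i)_{i \in R})} N^{2c(S_{\phi'}) - |R| - 2g(S_{\phi'})}$, with the convention $F_\emptyset := 1$.

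Plugging this back into the multilinear expansion above and interchanging the two summations, the coefficient of each $F_{[k]\setminus B}$ collapses to $N^{|B|}\prod_{i \in B}a_{w_i}\sum_{T \subseteq B}(-1)^{|T|}$, which vanishes unless $B = \emptyset$ and equals $1$ when $B = \emptyset$. Hence only $F_{[k]}$ survives, and this is exactly the right-hand side of the lemma. The one point that genuinely requires care is checking that the factorization of $\phi$ by $A(\phi)$ is truly bijective; this is immediate, because whether face $i$ is an atom depends only on how $\phi$ pairs the edges of $f_{w_i}$ and is independent of the pairing on the remaining faces. Beyond this verification, the argument is a straightforward bookkeeping exercise, and I do not anticipate any substantial obstacle.
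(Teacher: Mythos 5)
Your proposal is correct and follows essentially the same route as the paper: a multilinear expansion of the product of centered traces, the observation that a pairing inducing atoms on a set of faces factors as independent spherical self-pairings times a pairing of the remaining faces (with the exponent splitting additively), and an inclusion--exclusion collapse. The only cosmetic difference is that you stratify pairings by their \emph{exact} atom set and cancel via $\sum_{T\subseteq B}(-1)^{|T|}$, whereas the paper counts pairings whose atom set \emph{contains} a given complement and invokes inclusion--exclusion at the end; these are equivalent bookkeeping choices.
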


\begin{proof}
Let $T\subseteq \{1,...,k\}.$ By Theorem \ref{genusexpansion} we find that $$
\E \left( \prod_{i\in T} \Tr( G_{w_i}) \right) = \sum_{t \geq 0} P_{T,t}N^{|T|-2t},
$$
where $P_{T,t}$ is the number of pairings $\phi\in\AdPair(w_i~:~i\in T)$ for which $c(S_{\phi})-g(S_{\phi})=|T|-t$. The summation is over $t\ge 0$ since $c(S_{\phi})\le |T|$ and $g(S_{\phi})\ge 0$. Note that this sum is actually finite, since there are only finitely many admissible pairings. A simple manipulation shows that
$$
\E \left( \prod_{i=1}^k \lll( \Tr( G_{w_i})-a_{w_i}N \rr) \right) = 
\sum_{t\geq 0} \left( \sum_{T\subseteq \{1,...,k\}}(-1)^{k-|T|}P_{T,t}\prod_{i\notin T}a_{w_i}\right) N^{k-2t}.
$$
The term $P_{T,t}\prod_{i\notin T}a_{w_i}$ counts admissible pairings $\phi$ in $\AdPair(w_1,...,w_k)$ that induce $k-|T|$ spheres --- an atom on $f_{w_i}$ for every $i\notin T$ --- and a surface $S'$ on $\{f_{w_i}~:~i\in T\}$ satisfying $c(S')-g(S')=|T|-t$. In particular, every such $\phi$ satisfies $c(S_{\phi})-g(S_{\phi})=k-t$. Hence, the summation
$$\sum_{T\subseteq \{1,...,k\}}(-1)^{k-|T|}P_{T,t}\prod_{i\notin T}a_{w_i}$$
counts, by inclusion-exclusion, 
the number of pairings $\phi\in\AdPair_{AF}(w_1,...,w_k)$ for which $c(S_{\phi})-g(S_{\phi})=k-t$. This completes the proof of Lemma \ref{lemma:centralized}.

\end{proof}
We call an admissible pairing $\phi$ {\em bi-atomic} if $S_\phi$ is homeomorphic to the disjoint union of $k/2$ copies of $\S$. We denote by $d_{w_1,...,w_k}$ the number of {\em bi-atomic} pairings. Note that $d_{w_1,...,w_k}=0$ if $k$ is odd.

\begin{lemma}\label{lemma:diatomic} For every $\phi\in\AdPair_{AF}(w_1,...,w_k)$ there holds $2c(S_{\phi})-k-2g(S_{\phi})\le 0$. Equality holds if and only if $S_\phi$ is homeomorphic to a disjoint union of $k/2$ copies of $\S$. In particular, we have
$$
\E \left( \prod_{i=1}^k \lll( \Tr( G_{w_i})-a_{w_i}N \rr) \right) 
=d_{w_1,...,w_k} + O\left(\frac 1{N}\right).
$$
\end{lemma}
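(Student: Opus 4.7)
The plan is to analyze $S_\phi$ component by component via the Euler characteristic. Let $C_1,\dots,C_c$ be the connected components of $S_\phi$, of genera $g_1,\dots,g_c$, where component $C_i$ is built by gluing some $k_i\ge 1$ of the $k$ polygonal faces (so $\sum_i k_i = k$ and $\sum_i g_i = g(S_\phi)$). Since $\chi(C_i) = 2-2g_i$, summing over components gives
\[
2c(S_\phi) - k - 2g(S_\phi) \;=\; \sum_{i=1}^{c}\bigl(2 - 2g_i - k_i\bigr).
\]
I would then argue the inequality term by term. If $g_i\ge 1$, then $2-2g_i\le 0\le k_i$ already, so $2-2g_i-k_i\le 0$. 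If $g_i = 0$, then $C_i$ is a sphere built from $k_i$ faces; the atom-free assumption says no sphere component arises from a single face, so $k_i\ge 2$, giving $2-2g_i-k_i = 2-k_i \le 0$. Summing, the whole expression is $\le 0$.

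For equality, each summand must vanish. If $g_i\ge 1$ and $k_i\ge 1$, then $2-2g_i-k_i\le -1<0$; hence every component must have $g_i = 0$, and then vanishing forces $k_i = 2$. Thus equality occurs precisely when $S_\phi$ is a disjoint union of spheres, each obtained from exactly two faces, which is the bi-atomic condition (in particular $k$ is even).

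For the "in particular" statement, I would plug the inequality into Lemma~\ref{lemma:centralized}. The exponent $2c(S_\phi)-k-2g(S_\phi)$ has the same parity as $k$; since it is bounded above by $0$ with equality only at bi-atomic pairings, all other atom-free pairings contribute a power $N^{-s}$ with $s\ge 1$ (and in fact $s\ge 2$ when $k$ is even, $s\ge 1$ when $k$ is odd, in which case $d_{w_1,\dots,w_k}=0$ anyway). Summing the $d_{w_1,\dots,w_k}$ dominant contributions and bounding the rest yields
\[
\E\Bigl(\prod_{i=1}^{k}\bigl(\Tr(G_{w_i})-a_{w_i}N\bigr)\Bigr) \;=\; d_{w_1,\dots,w_k} + O\!\left(\tfrac{1}{N}\right),
\]
as claimed. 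The only conceptual step is recognizing that atom-freeness is precisely the hypothesis that rules out genus-zero, one-face components and thereby turns the Euler-characteristic bound into a tight estimate — everything else is bookkeeping.
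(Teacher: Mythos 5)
Your proof is correct and follows essentially the same route as the paper: both decompose $2c(S_\phi)-k-2g(S_\phi)$ over the connected components of $S_\phi$, use atom-freeness to rule out genus-zero single-face components, and observe that the remaining per-component contributions are non-positive with equality exactly in the bi-atomic case; the ``in particular'' clause then follows from Lemma~\ref{lemma:centralized} since the exponent is an integer, so any strict inequality already gives a factor of $N^{-1}$. The only cosmetic difference is that you bound each summand $2-2g_i-k_i\le 0$ directly, whereas the paper bounds $c-g\le k/2$ by first discarding the single-face components and then counting the multi-face ones.
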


\begin{proof}
Let $S_1,...,S_l$ be the connected components of $S_{\phi}$. Denote by $|S_i|$ the number of polygonal faces in $S_i$ and its genus by $g_i$. Then,
\begin{equation}\label{eqn:gc2k}
c(S_{\phi})-g(S_{\phi})=\sum_{i=1}^{l}1-g_i \le \sum_{i~:~|S_i|\ge 2} 1-g_i
\le |\{i:|S_i|\ge 2\}| \le k/2.
\end{equation}
The first inequality is by our assumption that $\phi$ is atom-free. Indeed, this implies that if $|S_i|=1$ then $g_i\ge 1$, since otherwise $S_i$ would be an atom. The second inequality is by the non-negativity of the genus, and the third inequality is immediate: the number of components with at least two faces cannot exceed the total number of faces over two. This analysis also shows that equality holds if and only if there are exactly $k/2$ components of genus $0$. The statement of Lemma \ref{lemma:diatomic} follows.
\end{proof}


\begin{proof}[Proof of Theorem \ref{abcthm}]
The fact that $\E \Tr(G_w)=a_w N+O(1/N)$ is a special case of Lemma \ref{lemma:centralized}, but it also follows directly from Theorem \ref{genusexpansion} since $2c(S_{\phi})-1-2g(S_{\phi})\le 1$ for every $\phi\in\AdPair(w)$ and equality holds if and only if $\phi$ is spherical. \medskip 

Let $Z=\Tr(G_w)-a_wN$ and consider the mixed moment $\E(Z^k\overline Z^l)$ for $k,l\ge 0$. It is clear that $\overline Z = \Tr(G_{w^*})$, hence $\E(Z^k\overline Z^l)$ falls under the scope of Lemma \ref{lemma:centralized} by considering $k$ copies of $w$ and $l$ copies of $w^*$. In particular, the main term of $\E(Z^k\overline Z^l)$ equals the number of admissible pairings $\phi$ for which $S_{\phi}$ is homemorphic to $(k+l)/2$ spheres. \medskip

This can be computed by going over all pairings of $k$ $w$-labelled faces and $l$ $w^*$-labelled faces; the contribution of each such pairing to the main term is $b_w^{d}c_w^{(k+l)/2-d}$, where $d$ is the number of $w$-labelled faces that are matched with a $w^*$-labelled face. Indeed, every such pair of faces can construct a $2$-sphere in $b_w$ ways, while pairs of $w$-labelled faces (as well as $w^*$-labelled faces) can construct a $2$-sphere in $c_w$ ways. \medskip

It remains to prove that these are precisely the mixed moments of $Z'=X+iY$ with the expected covariance structure. Indeed, Wick's Theorem yields the exact same summation for the mixed moment of $Z'$ except that $b_w^{d}c_w^{(k+l)/2-d}$ is replaced by 
$$
 \lll( \E Z' \overline{Z'} \rr)^{d}\lll( \E Z'^2 \rr)^{\frac{k-d}{2}} \lll( \E \overline{Z'}^2 \rr)^{\frac{l-d}{2}}.
$$
The proof is concluded by observing that 
$\E Z' \overline{Z'} = \E X^2 + \E Y^2 =b_w$ and $\E Z'^2= \E \overline{Z'}^2 = \E X^2 - \E Y^2 = c_w$. \end{proof}



We say that two $*$-free words are {\em trace-distinct} if they are not equal up to a cyclic permutation of the letters. Genus expansion also allows to prove asymptotical independence properties (referred to as second order freeness in the works of Speicher \& al. \cites{MingoSpeicher1, MingoSpeicher2}). Here we prove asymptotical independence for $*$-free trace-distinct words.

\begin{proposition}\label{independence}
For any set of $*$-free trace-distinct words $w_1, \dots, w_m $, 
$$
\lll( \Tr( G_{w_i}) \rr)_{i=1}^m \distconv (Z_1, \dots, Z_m)
$$
where the limit is a vector of independent complex centered Gaussian variables with variances given by Theorem \ref{abcthm}.
\end{proposition}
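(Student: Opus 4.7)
The plan is to use the method of moments, applying the centered genus expansion of Lemma \ref{lemma:centralized} and its asymptotic form Lemma \ref{lemma:diatomic}. Set $Z_i := \Tr(G_{w_i}) - a_{w_i}N$, so that $\overline{Z_i} = \Tr(G_{w_i^*}) - a_{w_i}N$, and fix arbitrary nonnegative integers $k_i, l_i$ for $1 \le i \le m$. I would compute $\E \prod_{i=1}^m Z_i^{k_i} \overline{Z_i}^{l_i}$ by applying Lemma \ref{lemma:centralized} to the family consisting of $k_i$ copies of $w_i$ together with $l_i$ copies of $w_i^*$ for each $i$; Lemma \ref{lemma:diatomic} then identifies the leading-order contribution as $N \to \infty$ with the count of bi-atomic pairings, all other atom-free pairings being $O(N^{-1})$.

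The key combinatorial step uses $*$-freeness and trace-distinctness. Because each $w_i$ has only non-starred letters, admissibility forces every spherical component of $S_\phi$ in a bi-atomic pairing to consist of one $w_i$-face glued to one $w_j^*$-face for some indices $i,j$. I claim such a gluing can produce a sphere only when $w_i$ and $w_j$ are cyclic rotations of each other. Indeed, writing $n = |w_i| = |w_j|$, the Euler identity with $F = 2$, $E = n$ forces $V = n$, so the 1-skeleton embeds as a single cycle of length $n$; the admissible edge pairing realizing this cycle must identify the two face boundaries via a cyclic shift compatible with the orientation conventions (non-starred edges direct, starred edges indirect), and reading off the letter labels along this shift yields that $w_i$ is a cyclic rotation of $w_j$. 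By trace-distinctness, this forces $i = j$.

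Consequently, bi-atomic pairings factorize over the index $i$: they decompose, for each $i$ separately, into a bijection between the $w_i$-faces and the $w_i^*$-faces together with a spherical gluing of each paired couple. Such pairings exist only when $k_i = l_i$ for every $i$, in which case their total number equals $\prod_{i=1}^m k_i! \cdot b_{w_i}^{k_i}$. Moreover, since $w_i$ is $*$-free, $\AdPair(w_i, w_i)$ is empty by admissibility, hence $c_{w_i} = 0$. The resulting limiting moment therefore matches exactly the Wick expansion for independent complex Gaussians of variances $b_{w_1}, \ldots, b_{w_m}$ predicted by Theorem \ref{abcthm}, and the method of moments yields the claimed joint convergence.

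The main obstacle is the topological-combinatorial claim that a spherical admissible gluing of two $*$-free polygonal faces forces the underlying words to be cyclic rotations of each other; making this rigorous requires a careful rotation-system analysis, keeping track of the convention that $G$-edges are oriented directly and $G^*$-edges inversely around their faces.
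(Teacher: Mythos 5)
Your proposal is correct and follows essentially the same route as the paper: the method of moments combined with Lemma \ref{lemma:diatomic} to reduce to bi-atomic pairings, trace-distinctness to force each spherical two-face component to pair a $w_i$-face with a $w_i^*$-face, and the count $\prod_i k_i!\, b_{w_i}^{k_i}$ matched against the Wick expansion. Your extra care on the key combinatorial claim (that a spherical gluing of two faces forces the words to be cyclic rotations of each other) is a welcome elaboration of a step the paper delegates to the rules of Subsection \ref{spherebuilding}; to close it fully, note that the boundary walk of each face is an Eulerian circuit of the $n$-vertex, $n$-edge connected $1$-skeleton, forcing all degrees to equal $2$ and hence the skeleton to be a single cycle.
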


\begin{remark}
Clearly, if two $*$-free words $w_1$ and $w_2$ are not trace-distinct then the traces of $G_{w_1}$ and $G_{w_2}$ are equal. Note that Proposition \ref{independence} yields in particular a converse statement. 
\end{remark}

\begin{proof} The proof relies on the moments method, as all we need to prove is that every mixed moment factorizes. For every two sequences $(k_i)_{i=1}^{m}, (l_i)_{i=1}^{m}$ of integers, the mixed moment we want to compute is
$$
\E \prod_{i=1}^m \Tr( G_{w_i})^{k_i} \overline{\Tr( G_{w_i})}^{l_i}.
$$
Let $k:=\sum_i k_i+l_i$. Clearly, $a_{w_i}=0$ since $w_i$ is $*$-free. Therefore, by Lemma \ref{lemma:diatomic}, the leading term of the mixed moment is given by admissible pairings that yield a disjoint union of $k/2$ spheres. Because the words are trace-distinct, $w_i$ can be paired only with $w_i^*$, hence such an admissible pairing exists only if $k_i=l_i$ for every $i$. In such case, the number of these admissible pairings is equal to $\prod_i b_{w_i}^{l_i}l_i!$. Indeed, there are $l_i!$ ways to pair the $w_i$'s with the $w_i^*$'s, and each such pair can create $b_{w_i}$ different spheres. Note that by a similar argument, $\E \Tr( G_{w_i})^{k_i} \overline{\Tr( G_{w_i})}^{l_i}$ vanishes if $k_i\neq l_i$ and is equal to $b_{w_i}^{l_i}l_i!$ otherwise, for every $1\le i\le m$. In conclusion,
$$
\lim_{N \rightarrow \infty} \E \prod_{i=1}^m \Tr( G_{w_i})^{k_i} \overline{\Tr( G_{w_i})}^{l_i}
=
\prod_{i=1}^m \lim_{N \rightarrow \infty} \E \lll( \Tr( G_{w_i})^{k_i} \overline{\Tr( G_{w_i})}^{l_i} \rr).
$$
The result follows.
\end{proof}

\subsubsection{The structure of spherical pairings.}\label{spherebuilding}

In order to use Theorem \ref{abcthm}, one needs to evaluate the parameters $a_w, b_w, c_w$ of a $*$-word $w$. This evaluation boils down to counting spherical pairings of one or two labelled faces. Here we specify the three rules that characterize spherical pairings of one or two faces: (i) no internal crossing, (ii) no external crossing, and (iii) no `bridge'. 

\begin{fact}
An admissible pairing of one or two labelled faces is spherical if and only if conditions (i), (ii) and (iii) presented below are met.
\end{fact}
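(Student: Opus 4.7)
The plan is to translate sphericity into a combinatorial vertex-count using Euler's formula. For an admissible pairing $\phi$ of one or two labelled faces, we have $F(S_\phi)\in\{1,2\}$ and $E(S_\phi)=m/2$ fixed, where $m$ is the total number of edges. The relation $V(S_\phi)-E(S_\phi)+F(S_\phi)=2c(S_\phi)-2g(S_\phi)$ shows that $S_\phi\cong \S$ is equivalent to connectedness together with the sharp count $V(S_\phi)=E(S_\phi)-F(S_\phi)+2$. Because every vertex of $S_\phi$ corresponds to an equivalence class of corners of the polygons under the identifications induced by $\phi$, the problem reduces to combinatorially counting such equivalence classes.

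First I would handle the one-face case, where only condition (i) is relevant. This is a classical chord-diagram statement: a pairing of the edges of a $2n$-gon produces a sphere if and only if the associated chord diagram is non-crossing. I would prove it by induction on $n$, using that a non-crossing pairing always contains an adjacent pair of matched edges whose contraction yields a smaller non-crossing pairing, and that the base case (a bigon paired to itself) is manifestly $\S$. Conversely, a single crossing produces a handle, since the identified edges locally realize the standard torus word $aba^{-1}b^{-1}$, raising the genus by $1$.

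For the two-face case a similar induction is available provided (i), (ii) and (iii) all hold. Conditions (i) and (ii) ensure that within each face and across the two faces no two chords cross, while condition (iii) rules out the degenerate configuration in which an external pair sits astride internal structures on both sides. Together they should guarantee the existence of some adjacent pair of matched edges (either within one face or across the two faces), and that contracting such a pair preserves all three conditions while reducing $m$ by $2$. The base case is two small polygons glued along their matched edges, which is again $\S$. For necessity, I would show that each violation of (i), (ii), or (iii) costs at least one vertex in the identification count, hence either adds at least one to $g(S_\phi)$ or forces $c(S_\phi)\ge 2$ in a way that ruins the homeomorphism to $\S$.

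The main obstacle is the two-face case, and specifically the bridge condition: one must verify that (iii) is exactly the additional obstruction beyond the absence of internal and external crossings, and that it is preserved under the adjacent-pair contraction used in the induction. In particular, configurations respecting (i) and (ii) can still produce a torus summand through non-trivial interactions between internal and external chords, and it is precisely these configurations that (iii) should detect and forbid. I expect the bulk of the work to be a careful case analysis of how an external pair interacts with the non-crossing internal chords of the two faces and a corresponding vertex-counting argument.
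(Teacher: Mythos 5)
Your plan coincides with the paper's argument on the sufficiency half: both prove that (i)--(iii) imply sphericity by induction on the number of edges, contracting an adjacent matched pair (the $aa^{-1}$ reduction from the classification of surfaces), with the one-face case being the classical ``non-crossing chord diagram $\Leftrightarrow$ genus zero'' statement. The divergence is in the necessity half. The paper handles all three conditions uniformly by exhibiting, for any violation, two closed curves on $S_\phi$ that meet transversally in exactly one point (Figures \ref{Internal_crossing}, \ref{External_Crossing}, \ref{Bridge_Crossing}); since any two closed curves on $\S$ have vanishing mod-$2$ intersection number, this excludes the sphere. You invoke this idea (as the local $aba^{-1}b^{-1}$ handle) only for internal crossings, and for (ii) and (iii) you propose instead to show that a violation ``costs at least one vertex.'' That is the thin point of your plan: $V(S_\phi)$ is a global quantity determined by the entire pairing, so a local violation does not obviously decrease it, and any rigorous proof of the vertex deficit will in effect reconstruct either the intersecting-curves argument or an uncrossing/genus-monotonicity argument. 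So while the claim is true, the route you propose for it is harder than the one the paper takes, and as stated it is the one genuine gap.

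You are also missing the reduction that the paper mentions in passing and that collapses the two-face case, which you expect to be ``the bulk of the work.'' Glue a single external pair first: the two polygons merge into one, and conditions (ii) and (iii) become precisely the statement that the induced pairing of the merged polygon has no internal crossing (an external crossing $e_1f_1,e_2f_3,e_3f_2$ turns into a crossing of the chords $e_2f_3$ and $e_3f_2$ after gluing $e_1$ to $f_1$, and a bridge turns into a crossing of the internal chord $e_2e_4$ with the chord created from $e_3$). Both directions of the two-face case then follow from the one-face case, the preservation of (i)--(iii) under your contraction step becomes automatic, and the case analysis of how external pairs interact with internal chords disappears. With that reduction in place, your outline and the paper's argument are essentially the same.
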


The proof of this fact will be made clear as we go through each one of these three conditions. Note that the first one appears very commonly in the literature, as many computations are equivalent to calculating the term $a_w$, which simply amounts to counting non-crossing pairings. More generally, conditions on annular non-crossing permutations and partitions are presented in \cite{MingoNica2004}. \\

 \textbf{(i) Internal crossings:} If the edges $e_1,e_2,e_3,e_4$ are situated around a given face in that order, then pairing $e_1$ with $e_3$ and $e_2$ with $e_4$ does not yield a sphere. Indeed, as illustrated on Figure \ref{Internal_crossing}, there will be two paths that intersect only once in a transverse way on the resulting surface of such a pairing, which is impossible on a sphere. 
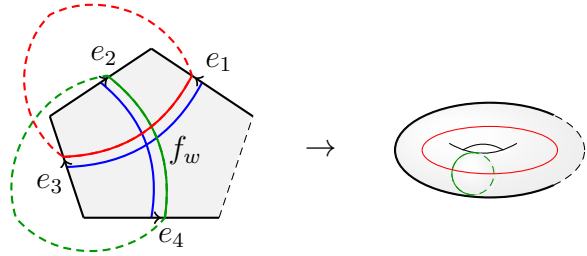
\begin{figure}[h!]
\begin{center}
\begin{tikzpicture}[scale=.9]
\begin{scope}[thick,decoration={
    markings,
    mark=at position 0.57 with {\arrow{>}}}
    ] 
\filldraw[fill=black!5!white,draw=white](-2,-1) -- (0,-1) -- (.5,.5) -- (-1,1.5)-- (-2.5,.5) -- (-2,-1);
  \draw[color=blue] (-1,-1) to[bend right] (-1.75,1);
  \draw[color=blue] (-2.25,-.25) to[bend right] (-.25,1);
\draw[postaction={decorate}] (.5,.5) -- (-1,1.5);
\draw[postaction={decorate}] (-2.5,.5) -- (-1,1.5);
\draw[postaction={decorate}] (-2,-1) -- (-2.5,.5);
\draw[postaction={decorate}] (-2,-1) -- (0,-1);
\end{scope}
\draw (1.5,0) node {$\rightarrow$};
\draw[densely dashed] (0,-1) -- (.5,.5);
\draw (-.5,0) node {$f_w$};
\draw (0,1.3) node {$e_1$};
\draw (-1.7,1.4) node {$e_2$};
\draw (-2.5,-.5) node {$e_3$};
\draw (-.7,-1.3) node {$e_4$};
 \draw[color=black!40!green,thick,densely dashed] (-.8,-1) to[bend right=30] (-1.65,1.1) to[bend right=50] (-3,-1) to[bend right=50] (-.8,-1) ; 
  \draw[color=black!40!green,thick] (-.8,-1) to[bend right=30] (-1.65,1.1); 
 \draw[color=red,thick,densely dashed] (-2.3,-.1) to[bend right=30] (-.4,1.1) to[bend right=50] (-2.5,2) to[bend right=50] (-2.3,-.1) ; 
 \draw[color=red,thick] (-2.3,-.1) to[bend right=30] (-.4,1.1) ; 
  \draw[thick] (4,0) ellipse (1.4cm and .7cm);
  \filldraw[fill=white,opacity=1,draw=white] (5.5,0) circle (.75cm);
  \draw[densely dashed] (4,0) ellipse (1.4cm and .7cm);
 \shade[ball color=black!10!white, opacity=.1] (4,0) ellipse (1.4cm and .7cm);
   \filldraw[color=white,opacity=1] (3.7,0) arc (180:360:.3cm and 0.05cm) -- (4,.09) -- cycle;
  \draw (3.6,0) to[bend left] (4.2,0);
  \draw (3.4,.1) to[bend right] (4.4,.1);
  \draw[color=black!40!green] (3.75,-.35) ellipse (.31cm and .31cm);  
    \filldraw[color=white] (3.8,-.35) ellipse (.3cm and .3cm);  
  \shade[ball color=black!10!white, opacity=.1] (3.8,-.35) ellipse (.3cm and .3cm);  
  \draw[densely dashed,color=black!40!green] (3.75,-.35) ellipse (.31cm and .31cm);  
  \draw[color=red] (4,0) ellipse (1cm and .35cm);  
\end{tikzpicture}
\end{center}
\caption{Pairings that cross internally do not yield spheres. Such a pairing is represented in blue, and two resulting paths in green and red. Only one crossing happens on the face, leading to one transverse intersection on the surface.}
\label{Internal_crossing}
\end{figure}

\textbf{(ii) External crossing}: If the edges $e_1,e_2,e_3$ and $f_3, f_2, f_1$ are situated around two different faces in this order, then pairing $e_1$ with $f_1$, $e_2$ with $f_3$ and $e_3$ with $f_2$ never yields a sphere. This can either be seen by drawing \textit{ad hoc} paths as previously (see Figure \ref{External_Crossing}), or by considering the face formed after pairing $e_1$ with $f_1$, and then apply the internal crossings condition. 
    
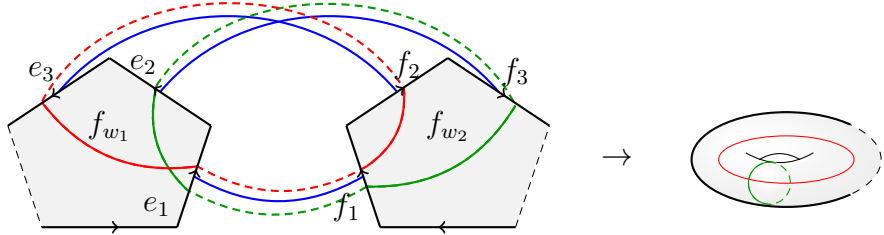
\begin{figure}[h!]
\begin{center}
\begin{tikzpicture}[scale=.9]
\begin{scope}[thick,decoration={
    markings,
    mark=at position 0.57 with {\arrow{>}}}
    ] 
\filldraw[fill=black!5!white,draw=white](-2,-1) -- (0,-1) -- (.5,.5) -- (-1,1.5)-- (-2.5,.5) -- (-2,-1);
\filldraw[fill=black!5!white,draw=white](-7,-1) -- (-5,-1) -- (-4.5,.5) -- (-6,1.5)-- (-7.5,.5) -- (-7,-1);
  \draw[color=blue] (-.25,1) to[bend right=50] (-5.25,1);
  \draw[color=blue] (-1.75,1) to[bend right=50] (-6.75,1);
  \draw[color=blue] (-2.25,-.25) to[bend left] (-4.75,-.25);
  \draw[color=black!40!green,densely dashed] (-2.2,-.4) to[bend left] (-4.85,-.45) to[bend left] (-5.3,1.1) to[bend left=60] (0,.8) to[bend left] (-2.2,-.4) ; 
   \draw[color=black!40!green] (-4.85,-.45) to[bend left] (-5.3,1.1) ;
   \draw[color=black!40!green] (0,.8) to[bend left] (-2.2,-.4) ; 
 \draw[color=red,densely dashed] (-2.2,-.1) to[bend left] (-4.7,-.1) to[bend left] (-7,.85) to[bend left=60] (-1.65,1.05) to[bend left] (-2.2,-.1) ; 
 \draw[color=red] (-4.7,-.1) to[bend left] (-7,.85) ; 
 \draw[color=red] (-1.65,1.05) to[bend left] (-2.2,-.1) ; 
\draw[postaction={decorate}] (-1,1.5) -- (.5,.5);
\draw[postaction={decorate}] (-2.5,.5) -- (-1,1.5);
\draw[postaction={decorate}] (-2,-1) -- (-2.5,.5);
\draw[postaction={decorate}] (0,-1) -- (-2,-1);
\draw[postaction={decorate}] (-4.5,.5) -- (-6,1.5);
\draw[postaction={decorate}] (-6,1.5) -- (-7.5,.5);
\draw[postaction={decorate}] (-5,-1) -- (-4.5,.5);
\draw[postaction={decorate}] (-7,-1) -- (-5,-1);
\end{scope}
\draw[densely dashed] (0,-1) -- (.5,.5);
\draw[densely dashed]  (-7.5,.5) -- (-7,-1);
\draw (-6,0.5) node {$f_{w_1}$};
\draw (-1,0.5) node {$f_{w_2}$};
\draw (-7,1.3) node {$e_3$};
\draw (-5.5,1.4) node {$e_2$};
\draw (-5.3,-.7) node {$e_1$};
\draw (0,1.3) node {$f_3$};
\draw (-1.6,1.4) node {$f_2$};
\draw (-2.5,-.7) node {$f_1$};
\draw (1.5,0) node {$\rightarrow$};
  \draw[thick] (4,0) ellipse (1.4cm and .7cm);
  \filldraw[fill=white,opacity=1,draw=white] (5.5,0) circle (.75cm);
  \draw[dashed] (4,0) ellipse (1.4cm and .7cm);
 \shade[ball color=black!10!white, opacity=.1] (4,0) ellipse (1.4cm and .7cm);
   \filldraw[color=white,opacity=1] (3.7,0) arc (180:360:.3cm and 0.05cm) -- (4,.09) -- cycle;
  \draw (3.6,0) to[bend left] (4.2,0);
  \draw (3.4,.1) to[bend right] (4.4,.1);
  \draw[color=black!40!green] (3.75,-.35) ellipse (.31cm and .31cm);  
    \filldraw[color=white] (3.8,-.35) ellipse (.3cm and .3cm);  
  \shade[ball color=black!10!white, opacity=.1] (3.8,-.35) ellipse (.3cm and .3cm);
  \draw[densely dashed,color=black!40!green] (3.75,-.35) ellipse (.31cm and .31cm);
  \draw[color=red] (4,0) ellipse (1cm and .35cm);  
\end{tikzpicture}
\end{center}
\caption{Pairings that cross externally do not yield spheres.}
\label{External_Crossing}
\end{figure}
\medskip
 
\textbf{(iii) Bridge:} If the edges $e_1,e_2,e_3,e_4$ are situated around a face $w_1$ in this order, and $e_2$ is paired with $e_4$, then pairing both $e_1$ and $e_3$ to two edges $f_1$ and $f_2$ on another face $w_2$ cannot yield a sphere. This can either be seen by drawing \textit{ad hoc} paths as previously (see Figure \ref{Bridge_Crossing}), or by considering the face formed after pairing $e_1$ with $f_1$, and then apply the condition that there should be no internal crossing on that face. This condition generalizes to any number of faces by stating that their should be no possible path linking (`bridging') the two sides of $w_1$.
    
    \begin{figure}[h!]
\begin{center}
\begin{tikzpicture}[scale=.9]
\begin{scope}[thick,decoration={
    markings,
    mark=at position 0.57 with {\arrow{>}}}
    ] 
\filldraw[fill=black!5!white,draw=white](-2,-1) -- (0,-1) -- (.5,.5) -- (-1,1.5)-- (-2.5,.5) -- (-2,-1);
\filldraw[fill=black!5!white,draw=white](-7,-1) -- (-5,-1) -- (-4.5,.5) -- (-6,1.5)-- (-7.5,.5) -- (-7,-1);
  \draw[color=blue] (-6,-1) to[bend right=40] (-1,-1);
  \draw[color=blue] (-1.75,1) to[bend right=50] (-5.25,1);
 \draw[color=blue] (-4.75,-.25) to[bend left] (-6.75,1);
 \draw[color=black!40!green,densely dashed] (-6.85,.9) to[bend right=40] (-4.8,-.37) to[bend right=40] (-4.2,1) to[bend right] (-5.4,2.1) to[bend right=50] (-6.85,.9) ;
  \draw[color=black!40!green] (-6.85,.9) to[bend right=40] (-4.8,-.37) ;
 \draw[color=red,densely dashed] (-5.35,1.1) to[bend left=50] (-1.65,1.1) to[bend left=20] (-.9,-1) to[bend left=50] (-6.1,-1) to[bend left=20] (-5.35,1.1) ; 
  \draw[color=red] (-1.65,1.1) to[bend left=20] (-.9,-1) ; 
   \draw[color=red] (-6.1,-1) to[bend left=20] (-5.35,1.1) ; 
\draw[postaction={decorate}] (-1,1.5) -- (.5,.5);
\draw[postaction={decorate}] (-2.5,.5) -- (-1,1.5);
\draw[postaction={decorate}] (-2,-1) -- (-2.5,.5);
\draw[postaction={decorate}] (0,-1) -- (-2,-1);
\draw[postaction={decorate}] (-4.5,.5) -- (-6,1.5);
\draw[postaction={decorate}] (-7.5,.5) -- (-6,1.5);
\draw[postaction={decorate}] (-5,-1) -- (-4.5,.5);
\draw[postaction={decorate}] (-7,-1) -- (-5,-1);
\end{scope}
\draw[densely dashed] (0,-1) -- (.5,.5);
\draw[densely dashed]  (-7.5,.5) -- (-7,-1);
\draw (-5.3,0.4) node {$f_{w_1}$};
\draw (-.3,0.4) node {$f_{w_2}$};
\draw (-7,1.3) node {$e_4$};
\draw (-5.5,1.4) node {$e_3$};
\draw (-4.6,-.7) node {$e_2$};
\draw (-6,-1.5) node {$e_1$};
\draw (-1.6,1.5) node {$f_2$};
\draw (-1,-1.5) node {$f_1$};
\draw (1.5,0) node {$\rightarrow$};
  \draw[thick] (4,0) ellipse (1.4cm and .7cm);
  \filldraw[fill=white,opacity=1,draw=white] (5.5,0) circle (.75cm);
  \draw[dashed] (4,0) ellipse (1.4cm and .7cm);
 \shade[ball color=black!10!white, opacity=.1] (4,0) ellipse (1.4cm and .7cm);
   \filldraw[color=white,opacity=1] (3.7,0) arc (180:360:.3cm and 0.05cm) -- (4,.09) -- cycle;
  \draw (3.6,0) to[bend left] (4.2,0);
  \draw (3.4,.1) to[bend right] (4.4,.1);
  \draw[color=black!40!green] (3.75,-.35) ellipse (.31cm and .31cm);  
    \filldraw[color=white] (3.8,-.35) ellipse (.3cm and .3cm);  
  \shade[ball color=black!10!white, opacity=.1] (3.8,-.35) ellipse (.3cm and .3cm);  
  \draw[densely dashed,color=black!40!green] (3.75,-.35) ellipse (.31cm and .31cm);  
  \draw[color=red] (4,0) ellipse (1cm and .35cm);  
\end{tikzpicture}
\end{center}
\vspace{-.3cm}
\caption{Pairings that include a bridge do not yield spheres.}
\label{Bridge_Crossing}
\end{figure}
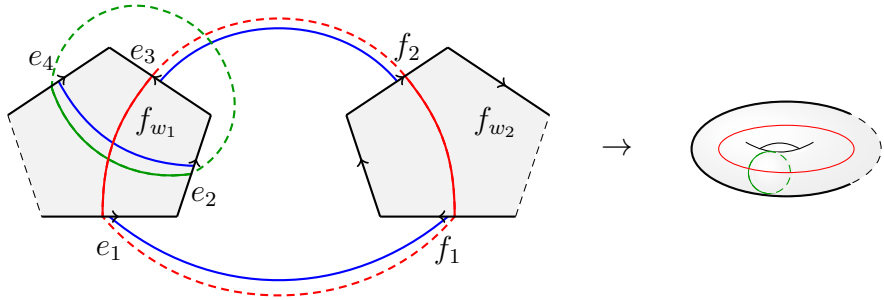

A straightforward induction on the number of available edges shows that any surface that is built from one or two faces respecting rules (i), (ii), (iii) is a sphere.

\subsubsection{Examples.}
We present below a few examples and illustrations of Theorem \ref{abcthm}.

\begin{itemize}

\item \textbf{$*$-Free words.} If $w$ is $*$-free, then there is no admissible pairing of $w$ onto itself or a copy of itself, hence $a_w = c_w =0$. However, $b_w \geq 1$, since at least pairing each letter of $w$ to its conjugate transpose in $w^*$ yields a sphere. Therefore, the limit distribution of $\Tr(G_w)$ is $\mathscr{N}_\C(0,b_w)$.

\item \textbf{$*$-Stable words.} Recall that a $*$-word is said to be $*$-stable if $w=w^*$. This is equivalent to $G_w$ being the covariance matrix $G_w=G_{w_1} G_{w_1}^*$ of a shorter $*$-word $w_1$. If $w$ is $*$-stable, then $b_w=c_w$ and the limit distribution of $\Tr G_w - a_w N$ is $\mathscr{N}_{\R}(0,b_w)$. Moreover, if $w=w_1w_1^*$, where $w_1$ is a $*$-free word, then $a_w=1$. Note that these facts also hold if $w$ and $w^*$ only differ by a cyclic permutation, so that $\Tr G_w = \Tr G_{w^*}$.

\item \textbf{Products of covariance matrices.} We can compute explicitly the parameters $a_{w_m},b_{w_m},c_{w_m}$ of the $*$-word $w_m = G_1 G_1^* \dots G_m G_m^*$, and find
$$
a_{w_m}=1, \quad b_{w_m}=2^m-1, \quad c_{w_m}=\frac{m(m+1)}{2}.
$$
Indeed $a_{w_m}=1$ since there is only one admissible pairing of $w_m$ and it is spherical. In addition, it is easy to see that no admissible pairing of $w_m$ and $w_m^*$ contains neither an internal crossing, an external crossing nor a bridge. Therefore, out of the $2^m$ admissible pairings, the only one that is not spherical yields two spheres, hence $b_{w_m}=2^m-1.$ Finally, no admissible pairing of two copies of $w_m$ has neither an internal crossing nor a bridge. For every $1\le i \le m$, each of the two copies of $w_m$ contains one edge that is labelled $G_i$ and one that is labelled $G_i^*$. These edge are paired either internally or externally, and an external crossing is created if and only if there are more than two external pairings. Hence $c_{w_m}=m+\binom m2.$

In particular, if $m=1$ or $m=2$, then the centered variable $ \Tr (G_w) - N$ converges to a real Gaussian of variance $1$ and $3$ respectively, whereas for $m \geq 3$ it converges to a complex Gaussian with non-standard covariance structure (see Figure \ref{Experiment1}).
\vspace{.1in}
\begin{figure}[ht]
    \centering
   \includegraphics[scale=.7]{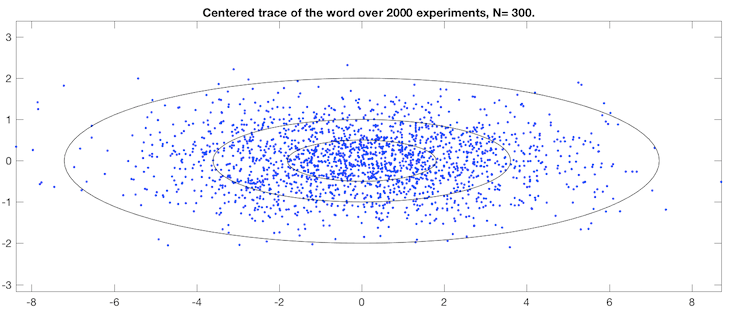}
    \caption{Centered trace of the $*$-word $G_1 G_1^* G_2 G_2^* G_3 G_3^*$ for $N=300$, over $2000$ experiments. According to Theorem \ref{abcthm}, this variable converges to a complex centered Gaussian whose covariance matrix is diagonal with entries $13/2$ and $1/2$. Ellipses of axis ratio $\sqrt{13}$ are represented on the picture.}
    \label{Experiment1}
\end{figure}


\item \textbf{Parameters of the $*$-word $w= G^a G^{*b} $.}

It turns out that the behaviour of $\Tr G^a G^{*b}$ is somewhat different, depending whether $a=b$ or $a<b$. If $a=b$, then a quick check ensures that $a_w=1$, and by symmetry it is obvious that $b_w=c_w$. To compute $b_w$, we observe that every pairing of two copies of $w$ has the following property. If there are $1\le i<j<k \le a$, such that the $j$-th edge of a face is paired internally, and both its $i$-th edge and its $k$-th edge are paired externally, then a bridge is created and the pairing is not spherical. In addition, the $i$-th edge of a face must be paired with the $(a+i)$-th edge of its face or of the other face, in order to avoid internal and external crossing. 

Therefore, in order to create a spherical pairing we select $1\le c_1,c_2,c_3,c_4\le a$ such that $1\le c_1+c_2=c_3+c_4<a$. We pair the first $c_1$ edges and the last $c_2$ edges of the first face internally, the first $c_3$ edges and the last $c_4$ edges of the second face internally, and the remaining edges are paired externally, as shown on Figure \ref{Example3}. Note that because $c=c_1+c_2<a$, some edges will be paired externally. There are $(c+1)^2$ such choices, hence
$$
b_w = \sum_{c=0}^{a-1} (c+1)^2 = S_2 (a)
$$
where $S_2(a)=\frac{1}{6} a(a+1)(2a+1).$ Hence,
$$
\Tr G^a G^{*a} - N \xrightarrow[N \rightarrow \infty]{d} \mathscr{N}_{\R} (0, S_2(a) ).
$$
In particular, when $a=1$ this yields the central limit theorem of $\Tr\left( G G^* \right)$, which is a sum of i.i.d.\! variables.

If $a< b$, then obviously, $a_w=c_w=0$, and a similar argument leads to $b_w=S_2(a+1)$. Therefore,
$$
\Tr G^a G^{*b} \xrightarrow[N \rightarrow \infty]{d} \mathscr{N}_{\C} (0, S_2(a+1) ).
$$
\vspace{.1in}
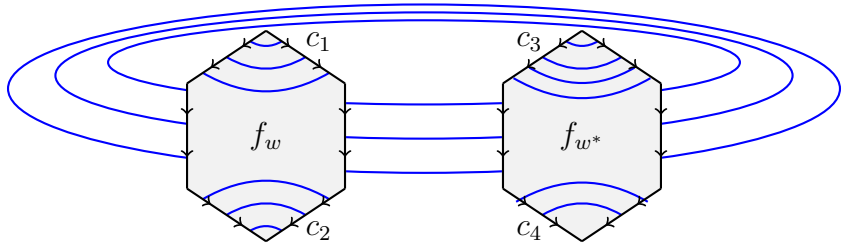
\begin{figure}[ht]
\begin{center}
\begin{tikzpicture}[scale=.7]
\draw[color=blue,thick] (0,1.4) ellipse (6cm and .8cm);
\draw[color=blue,thick] (0,1.15) ellipse (7cm and 1.2cm);
\draw[color=blue,thick] (0,.9) ellipse (7.9cm and 1.6cm);
\begin{scope}[thick,decoration={
    markings,
    mark=at position 0.3 with {\arrow{>}},
    mark=at position 0.7 with {\arrow{>}}}
    ] 
\filldraw[fill=black!5!white,draw=white] (-4.5,-1) -- (-4.5,1) -- (-3,2) -- (-1.5,1) -- (-1.5,-1) -- (-3,-2) -- (-4.5,-1);
\draw (-3,0) node {$f_w$};
\filldraw[fill=black!5!white,draw=white] (4.5,-1) -- (4.5,1) -- (3,2) -- (1.5,1) -- (1.5,-1) -- (3,-2) -- (4.5,-1);
\draw (3,0) node {$f_{w^*}$};
\draw (-2,1.8) node {$c_1$};
\draw (-2,-1.8) node {$c_2$};
\draw (2,1.8) node {$c_3$};
\draw (2,-1.8) node {$c_4$};
\draw[color=blue] (-3.3,1.8) to[bend right] (-2.7,1.8);
\draw[color=blue] (-3.75,1.5) to[bend right] (-2.25,1.5);
\draw[color=blue] (-4.2,1.2) to[bend right] (-1.8,1.2);
\draw[color=blue] (-3.3,-1.8) to[bend left] (-2.7,-1.8);
\draw[color=blue] (-3.75,-1.5) to[bend left] (-2.25,-1.5);
\draw[color=blue] (-4.2,-1.2) to[bend left] (-1.8,-1.2);
\draw[color=blue] (3.3,1.8) to[bend left] (2.7,1.8);
\draw[color=blue] (3.75,1.5) to[bend left] (2.25,1.5);
\draw[color=blue] (4,1.3) to[bend left] (2,1.3);
\draw[color=blue] (4.3,1.1) to[bend left] (1.7,1.1);
\draw[color=blue] (3.75,-1.5) to[bend right] (2.25,-1.5);
\draw[color=blue] (4.25,-1.25) to[bend right] (1.75,-1.25);
\draw[postaction={decorate}] (-3,2) -- (-1.5,1);
\draw[postaction={decorate}] (-1.5,1) -- (-1.5,-1);
\draw[postaction={decorate}] (-1.5,-1) -- (-3,-2);
\draw[postaction={decorate}] (-3,2) -- (-4.5,1);
\draw[postaction={decorate}] (-4.5,1) -- (-4.5,-1);
\draw[postaction={decorate}] (-4.5,-1) -- (-3,-2);
\draw[postaction={decorate}] (3,2) -- (1.5,1);
\draw[postaction={decorate}] (1.5,1) -- (1.5,-1);
\draw[postaction={decorate}] (1.5,-1) -- (3,-2);
\draw[postaction={decorate}] (3,2) -- (4.5,1);
\draw[postaction={decorate}] (4.5,1) -- (4.5,-1);
\draw[postaction={decorate}] (4.5,-1) -- (3,-2);
\end{scope}
\end{tikzpicture}
\end{center}
\caption{Spherical pairings of $w$ and $w^*$, for the $*$-word $w=G^a G^{*a}$.}
\label{Example3}
\end{figure}

\item \textbf{ Parameters of the $*$-word $w=(G_1 G_2 \dots G_a) (G_1 G_2 \dots G_b)^*$.}
Again we distinguish between the case $a=b$ and $a<b$. If $a=b$ then $a_w =1$ and $b_w=c_w$. The computation for $b_w$ can be done in the same way as before. Namely, a spherical pairing is obtained by choosing a contiguous segment of edges that are paired externally. However, since this $*$-word is composed of different letters, this segment must start and end in the same place in both $*$-words. Therefore, 
$$
b_w = \sum_{c=0}^{a-1} c+1 = S_1(a)
$$
where $S_1 (n) := \frac{1}{2} a (a+1)$. On the other hand, if $a<b$ then the edges that are paired internally form a prefix of length at most $a$. Hence,
$$
a_w=0, \quad c_w=0, \quad b_w = a+1.
$$
\end{itemize}
\section{Applications}\label{Applications}

This section is devoted to deriving the consequences of Theorem \ref{abcthm} announced in the introduction, concerning statistics of eigenvalues and singular values of $*$-free words -- that is, expressions involving i.i.d.\! Ginibre matrices without their conjugate transposes.

\subsection{Singular values of words, first order : Fuss-Catalan}
\label{subsec:FC}
We prove here Theorem \ref{thm:mainFC} concerning the limit empirical distribution of the singular values of a complex Ginibre word, and show that it only depends on the length of the word.

\subsubsection{Fuss-Catalan Distributions.}

We will consider the following generalization of Catalan numbers, called Fuss-Catalan numbers of parameter $s \in \N$ :
$$
\forall n \geq 0, \quad
\FC_s (n) = \frac{1}{ sn +1 } \binom{ sn +1 }{n}.
$$
They can be defined in an equivalent way by their initial terms and the recurrence formula
\begin{equation}\label{FCrec}
\FC_s(0)=\FC_{s}(1)=1, \qquad
\FC_s (n+1) = \sum_{k_1 + \dots + k_s = n} \prod_{j=1}^s \FC_s (k_j) 
\end{equation}
where the sum is taken over every possible $s$-uple of non-negative terms that sums up to $n$. This is equivalent to the equation
$$
F_s = 1 + z F_s^s
$$
where $F_s$ is the generating function
$$
F_s(X) : = \sum_{n \geq 0} \FC_s (n) X^n.
$$
Fuss-Catalan numbers, as well as a further generalization to two parameters known as Raney numbers, have been extensively studied. Notably, they appear in \cite{KempSpeicher} in the context of the study of free circular and $\mathscr{R}$-diagonal operators; the diagrams that are considered are equivalent to the ones we obtain in the case of a power $G^m$. \medskip

Importantly, Fuss-Catalan numbers are the moments of a distribution on $\R_+$ that we will refer to as $\rho_{\FC}^s$.

$$
\int_{\R_+} x^k \rho_{\FC}^s(x) \dd x = \FC_{s}(k).
$$

For $s=2$, this gives the Catalan numbers, $\FC_2(n) = C_n$, and the corresponding distribution is the Marchenko-Pastur distribution with shape parameter $\rho=1$, which can also be seen as the quarter circular distribution after a quadratic change of variable:
$$
\rho_{\FC}^2(x) = \frac{1}{2 \pi \sqrt{x}} \sqrt{4-x}.
$$

For $s \geq 3$, the Fuss-Catalan distributions can be expressed as a Meijer G-function with explicit family of parameters, or a combination of hypergeometric functions (see \cite{Penson}). We will not need to rely on any other parameter than the moments in the present paper. A relevant remark is that $\rho_{\FC}^s$ is in fact supported on the interval $[0, \frac{(s+1)^{s+1}}{s^s}]$. It is a well known general fact that the moments method can be applied to measures with compact support. \medskip

\begin{proof}[Proof of Theorem \ref{thm:mainFC}.]
We rely on the moments method, which is to say that convergence of all moments to Fuss-Catalan numbers implies the weak convergence of the measure to the corresponding Fuss-Catalan distribution.

The empirical measure is a random object, for which we claim that weak convergence holds almost surely. This follows from showing that the convergence of the moments occurs almost surely, that is derived directly from Theorem \ref{abcthm}. Indeed, since $(G_wG_{w}^*)^k=G_{(ww^*)^k}$, we find that
$$
\mathrm{Var} \left( \Tr\left( (G_w G_{w}^*)^k \right) - a_{(ww^*)^k} N \right) = \OO(1),
$$
as this variance converges to some finite limit. This implies that
$$
\mathrm{Var} \left( \frac{1}{N} \Tr\left( (G_w G_{w}^*)^k \right) - a_{(ww^*)^k} \right) = \OO(N^{-2}),
$$
and a classical Borel-Cantelli argument allows to conclude that the $k$-th moment of the empirical distribution converges almost surely to $a_{(ww^*)^k}$, the number of spherical pairings of $(w w^*)^k$, which we now compute. \\

Let us first consider the case of the $*$-free word $w=G^m$, and define:
$$
D_k = a_{(w w^*)^k} = \lim \frac{1}{N} \Tr\left( (G^mG^{m*})^k \right).
$$

We need to count spherical pairings of a face with $2km$ edges, that are organized in $k$ groups of $m$ star-free edges interlaced with $k$ groups of $m$ star edges (see Figure \ref{ex3FussCatalan}). Since there is only one face, $a_{(ww^*)^k}$ is equal to the number of admissible pairings without an internal crossing (see subsection \ref{spherebuilding}). This condition has an obvious geometric meaning. Namely, that it is possible to connect all the paired edges by non-crossing lines. Every such line splits the face into two {\em sides}, and the number of star and star-free edges on each side must be equal. As a consequence, a star edge in position $j$ in its group of $m$ star edges can only be paired with a star-free edge in position $m+1-j$ in its group.

\begin{figure}[ht]
\begin{center}
\begin{tikzpicture}[scale=.9]
 \filldraw[densely dashed, fill= black!10!white, draw=black] (-2,3.6) -- (-2.3,3.3) -- (-2.3,1.3) -- (-1,0) -- (1,0) -- (2.3,1.3) -- (2.3,3.3) -- (2,3.6) -- (-2,3.6);
 \draw[thick] (-2,3.6) -- (-2.3,3.3) -- (-2.3,1.3) -- (-1,0) -- (1,0) -- (2.3,1.3) -- (2.3,3.3) -- (2,3.6) ;
 \draw (0,2) node {$f_{(ww^*)^k}$};
     \draw[blue] (-.5,0) to[bend left=45] (2,1);
    \draw[red] (-1.3,0.3) to[bend right=45] (-2.3,1.8);
    \filldraw[fill=white,draw=black] (-2.3-.2,1.8-.2) rectangle (-2.3+.2,1.8+.2); 
    \filldraw[fill=white,draw=black] (-2.3-.2,2.3-.2) rectangle (-2.3+.2,2.3+.2);
    \filldraw[fill=white,draw=black] (-2.3-.2,2.8-.2) rectangle (-2.3+.2,2.8+.2); 
    \filldraw[fill=white,draw=black] (-1.3,0.3) circle (.2cm); 
    \filldraw[fill=white,draw=black] (-1.65,0.65) circle (.2cm); 
    \filldraw[fill=white,draw=black] (-2,1) circle (.2cm); 
    \filldraw[fill=white,draw=black] (-0.5-.2,-0.2) rectangle (-0.5+.2,.2); 
    \filldraw[fill=white,draw=black] (0-.2,-.2) rectangle (0+.2,.2); \filldraw[fill=white,draw=black] (.5-.2,-.2) rectangle (.5+.2,.2); 
    \filldraw[fill=white,draw=black] (1.3,0.3) circle (.2cm); 
    \filldraw[fill=white,draw=black] (1.65,.65) circle (.2cm);
    \filldraw[fill=white,draw=black] (2,1) circle (.2cm); 
        \filldraw[fill=white,draw=black] (2.3-.2,1.8-.2) rectangle (2.3+.2,1.8+.2); 
    \filldraw[fill=white,draw=black] (2.3-.2,2.3-.2) rectangle (2.3+.2,2.3+.2);
    \filldraw[fill=white,draw=black] (2.3-.2,2.8-.2) rectangle (2.3+.2,2.8+.2); 
\end{tikzpicture}
\end{center}
\caption{Part of the diagram obtained for $m=3$. We indicate star-free edges by circles and star-edges by squares. The blue line can be a part of an admissible spherical pairing, but the red line cannot. }
\label{ex3FussCatalan}
\end{figure}
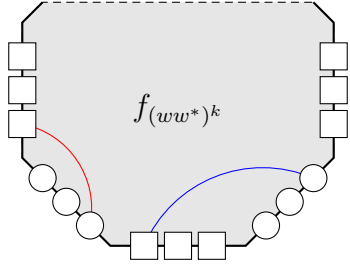

Thus, it is clear by a straightforward recurrence over non-crossing diagrams that
$$D_0=1, \quad D_1=1, \quad D_2=m, \quad D_{N+1} = \sum_{j_1, \dots, j_m} \prod_{i=1}^m D_{j_i}
$$
where the sum is taken over all possible $m+1$-tuples such that $\sum_{i=1}^m j_i=N$. This characterizes the Fuss-Catalan numbers with parameter $m+1$ and concludes the proof for $w=G^m$. \medskip

We observe that the same argument works for any $*$-free word $w= G_{i_1} \dots G_{i_m}$. Strictly speaking, there is now an additional condition that every edge must be paired with an edge of the same index. However, since the absence of internal crossing enforces  a star-free edge in position $j$ in its group to be paired with a star edge in position $m+1-j$ in its group, this additional condition is redundant. Therefore, for any $*$-free word $w$ of length $m$, the moments of $\mu_{ww^*}$ converge to the Fuss-Catalan numbers with parameter $m+1$.
\end{proof}

Convergence of the empirical measure of squared singular values to a Fuss-Catalan distribution was known in the case of powers (see \cite{Alexeev}) and products of independent Ginibre matrices (see \cites{Penson, LambertCLT}). The above theorem extends it to any $*$-free word, showing that only the length of the word matters at first order. The technique can also be applied to the real Ginibre ensemble.

\subsection{Mixed moments of $*$-free words}
For a single complex Ginibre matrix $G$ and any $2k$-tuple of integers $(a_1, b_1, \dots, a_k, b_k)$ we define:
\begin{equation}\label{def_mixed_moments}
\mathscr{M}_{\Gin}(a_1, b_1, \dots, a_k, b_k) : = \lim_{N \rightarrow \infty} \frac{1}{N} \E \lll( \Tr G^{a_1} G^{* b_1} G^{a_2} G^{* b_2} \dots G^{a_k} G^{* b_k} \rr).    
\end{equation}

These \textit{mixed moments} have been studied by Starr and Walters \cite{StarrWalters}. They appear naturally in the moments of Girko's hermitized form $(G-z)(G-z)^*$, as well as in the quaternionic resolvent (see \cites{BurdaSwiech, NowTar2018}). They are known to be linked to relevant statistics of eigenvalues and eigenvectors of random matrices. We prove here that the multi-indexed sequence of mixed moments of any $*$-free word only depends on its length $m$.

\begin{theorem} \label{thm:mixed} Let $w$ be any $*$-free word of length $m$ and $(a_1, b_1, \dots, a_k, b_k)$ any $2k$-tuple of integers. The following limit holds: 
$$
\lim_{N \rightarrow \infty} \frac{1}{N} \E \Tr \lll( G_w^{a_1} G_w^{* b_1} G_w^{a_2} G_w^{* b_1} \cdots G_w^{a_k} G_w^{* b_k} \rr) = \mathscr{M}_{\Gin} (m a_1, m b_1, \dots, m a_k, m b_k ).
$$
\end{theorem}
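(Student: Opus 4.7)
The plan is to reduce the identity to a combinatorial equality between the numbers of spherical pairings of two closely related single-face diagrams, and then exploit the rigidity of non-crossing pairings. Set $V = G_w^{a_1} G_w^{*b_1} \cdots G_w^{a_k} G_w^{*b_k}$ and let $V' = G^{m a_1} G^{*m b_1} \cdots G^{m a_k} G^{*m b_k}$ denote the analogous $*$-word in a single Ginibre matrix $G$. Equation (\ref{first_order}) of Theorem \ref{abcthm} yields $\frac{1}{N}\E\Tr(G_V) \to a_V$ and, by the definition (\ref{def_mixed_moments}), $\mathscr{M}_{\Gin}(m a_1, m b_1, \ldots, m a_k, m b_k) = a_{V'}$. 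So it suffices to prove $a_V = a_{V'}$.

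The inequality $a_V \leq a_{V'}$ is immediate: any admissible pairing of $V$ is a fortiori admissible for $V'$ (matching indices is a strictly stronger constraint than matching stars), and sphericality is a topological property independent of the labeling. The reverse inequality $a_V \geq a_{V'}$ is the main content: I will show that every spherical pairing of $V'$ automatically satisfies the index-matching required by $V$. Morally, this extends the Fuss-Catalan mechanism used in the proof of Theorem \ref{thm:mainFC} for a single $ww^*$ block to arbitrary arrangements of $w$- and $w^*$-blocks.

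The argument proceeds by induction on the number of paired edges. The key combinatorial fact is that every non-crossing pairing on a single polygonal face must contain an \emph{innermost} pair, i.e.\ two cyclically adjacent edges paired with each other. In the cyclic word $V$, a $G$-edge is adjacent to a $G^*$-edge only at a $w \to w^*$ boundary (where the terminal letter $G_{i_m}$ of the $w$-block meets the initial letter $G_{i_m}^*$ of the $w^*$-block) or at a $w^* \to w$ boundary (where $G_{i_1}^*$ meets $G_{i_1}$). In both configurations the indices coincide, so any innermost pair in a non-crossing pairing of $V'$ automatically respects the index-matching of $V$.

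The main technical obstacle is maintaining this property through the induction, since removing an innermost pair shortens two adjacent blocks. The invariant I would carry is the following: after any sequence of innermost-pair removals, the remaining cyclic word is a concatenation of contiguous subwords of the original $w$- and $w^*$-blocks, and every $G$/$G^*$ adjacency still arises from a truncated $w$-block ending in some $G_{i_r}$ placed next to a truncated $w^*$-block beginning in $G_{i_r}^*$ (or the symmetric $w^* \to w$ case). A short case analysis, distinguishing according to which boundary the removed pair sits at, shows that this invariant is preserved by each removal. The induction then closes and yields $a_V = a_{V'}$, completing the proof.
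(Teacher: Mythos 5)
Your reduction is the same as the paper's: both pass from the trace limit to the count $a_V$ of spherical pairings via Equation (\ref{first_order}), identify $\mathscr{M}_{\Gin}(ma_1,\dots,mb_k)$ with $a_{V'}$, record the easy inequality $a_V\le a_{V'}$, and then argue that every non-crossing, sign-admissible pairing of the single face automatically respects indices. Where you genuinely diverge is in the proof of that last fact. The paper's argument is global and essentially one line: any chord of a non-crossing pairing splits the face into two arcs, each of which must contain equally many starred and unstarred edges; reducing these counts modulo $m$ forces an unstarred edge in position $j$ of its $m$-block to be paired with a starred edge in position $m+1-j$ of its block, and since $w^*=G_{i_m}^*\cdots G_{i_1}^*$ those two positions carry the same index $i_j$. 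Your argument is the local, recursive counterpart: peel innermost pairs and propagate an invariant. Both are correct; the paper's version buys brevity and avoids any case analysis, while yours makes the recursive (Fuss--Catalan-type) structure of the pairings explicit.

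One caution about your invariant. As stated it constrains only the $G/G^*$ adjacencies, but new $G/G^*$ adjacencies can be created between blocks that were originally separated by one or more same-sign blocks: for instance in $w\,w^*\,w\,w^*$ (read cyclically) a copy of $w$ and the adjacent copy of $w^*$ can each be consumed entirely by their two neighbours, after which the surviving $w^*$-block's truncated end meets the surviving $w$-block's truncated start. To check the invariant for such a newly created adjacency you need to know how deeply each survivor has been truncated, and that information is carried across same-sign adjacencies which your invariant does not record. The fix is routine --- strengthen the invariant to assert that at \emph{every} adjacency the positions modulo $m$ of the two abutting letters are related exactly as in the original word $V$ (equivalently, that every vanished block sheds exactly $m$ letters, so truncation depths propagate consistently through chains of vanished blocks) --- but without this strengthening the induction does not formally close. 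This is precisely the bookkeeping that the paper's mod-$m$ balance argument sidesteps.
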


\begin{proof}
It is clear from Theorem \ref{abcthm} and Definition (\ref{def_mixed_moments}) that
$$
\mathscr{M}_{\Gin} (m a_1, m b_1, \dots, m a_k, m b_k ) = a_{w_1},
$$
where
$$
w_1 = G^{m a_1} G^{* mb_1} \cdots G^{m a_k} G^{* m b_k}.
$$
One can represent the face $f_{w_1}$ in a way similar to Figure \ref{ex3FussCatalan}, with the only difference that the groups of $m$ edges do not alternate as regularly between star and non-star, but follow the pattern imposed by the integers $a_i, b_i$. The relevant parameter $ a_{w_1}$ is the number of spherical pairings, which, as was explained in Subsection \ref{spherebuilding}, is equal to non-crossing admissible pairings of the edges of $f_{w_1}$. The key observation here is that a star edge $e^*$ and a star-free edge $e$ can be paired if and only if
\begin{enumerate}[(i)]
    \item If $e^*$ is in position $j$ in its group of $m$ star edges, then $e$ is in position $m+1-j$ in its group, and
    \item In each of the sides of the line connecting $e^*$ and $e$, the number of groups of star edges is equal to the number of groups of star free edges.
\end{enumerate}
Indeed, in each side of the line between $e$ and $e^*$, the numbers of star and star-free edges need to be equal, hence they need to be equal modulo $m$.

\medskip


Let us now turn to a general $*$-free word $w = G_{i_1} \cdots G_{i_m}$ of length $m$. For the same reason as above,
$$
\lim_{N \rightarrow \infty} \frac{1}{N} \E \Tr \lll( G_w^{a_1} G_w^{* b_1} G_w^{a_2} G_w^{* b_1} \cdots G_w^{a_k} G_w^{* b_k} \rr)
=
a_{w_2},
$$
where
$$
w_2 = (G_{i_1} \cdots G_{i_m})^{a_1} (G_{i_1} \cdots G_{i_m})^{* b_1} \cdots (G_{i_1} \cdots G_{i_m})^{a_k} (G_{i_1} \cdots G_{i_m})^{*b_k}.
$$
As in the proof of Theorem \ref{thm:mainFC}, the additional constraint that every edge must be paired with an edge of the same index is redundant with the constraints (i) and (ii). Therefore, $ a_{w_1} = a_{w_2}$ as claimed.
\end{proof}

One can immediately derive from this general property the following facts.

\begin{fact}
For every $*$-free word $w$ and $z \in \C$, the first order limit of the moments of the hermitization matrix $H_w(z)=(G_w-z)(G_w-z)^*$ depends only on the length of $w$.
\end{fact}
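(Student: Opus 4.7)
The goal is to show that $\lim_{N\to\infty}\frac1N\E\Tr\bigl(H_w(z)^k\bigr)$ depends only on $|w|=m$ (and on $z$, $k$), not on the particular letters $G_{i_1},\ldots,G_{i_m}$ making up the $*$-free word $w$. My plan is to reduce everything to Theorem~\ref{thm:mixed} by expanding the product.

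\textbf{Step 1: binomial expansion.} Write
\[
H_w(z)^k \;=\; \prod_{j=1}^{k}(G_w-z)(G_w-z)^* \;=\; \prod_{j=1}^{k}(G_w-z)(G_w^*-\bar z).
\]
Each of the $2k$ factors is a sum of two terms (matrix or scalar). Expanding yields
\[
\Tr\bigl(H_w(z)^k\bigr) \;=\; \sum_{S\subseteq[2k]} (-z)^{\alpha(S)}(-\bar z)^{\beta(S)}\,\Tr\bigl(M_S\bigr),
\]
where $\alpha(S)$ counts the odd positions outside $S$, $\beta(S)$ counts the even positions outside $S$, and $M_S$ is the ordered product (in the order of positions in $S$) of $G_w$'s at the selected odd positions and $G_w^*$'s at the selected even positions.

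\textbf{Step 2: putting $M_S$ into mixed-moment form.} For each $S$, the matrix $M_S$ is a product of $G_w$ and $G_w^*$, with the $G_w$'s and $G_w^*$'s appearing in an interlaced pattern dictated by $S$. Collecting consecutive $G_w$'s and consecutive $G_w^*$'s, and using the cyclic invariance of the trace to make the product begin with a block of $G_w$'s, we can write
\[
\Tr(M_S) \;=\; \Tr\bigl(G_w^{a_1}G_w^{*\,b_1}\cdots G_w^{a_\ell}G_w^{*\,b_\ell}\bigr)
\]
for some non-negative integers $a_1,b_1,\ldots,a_\ell,b_\ell$ depending only on $S$ (not on $w$). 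A few degenerate cases ($S=\emptyset$ gives $\Tr(I)=N$; an all-even or all-odd $S$ gives $\Tr(G_w^r)$ or $\Tr(G_w^{*r})$) are handled by the same formula with some $a_i$ or $b_i$ equal to $0$, which only shortens the mixed expression.

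\textbf{Step 3: apply Theorem~\ref{thm:mixed}.} By that theorem,
\[
\lim_{N\to\infty}\frac1N\,\E\,\Tr\bigl(G_w^{a_1}G_w^{*\,b_1}\cdots G_w^{a_\ell}G_w^{*\,b_\ell}\bigr) \;=\; \mathscr{M}_{\mathrm{Gin}}\bigl(ma_1,mb_1,\ldots,ma_\ell,mb_\ell\bigr),
\]
a quantity that depends only on $m=|w|$ and on the exponents $a_i,b_i$, which in turn are determined solely by $S$. Since the sum over $S$ in Step~1 is finite, we may pass the limit inside to get
\[
\lim_{N\to\infty}\frac1N\,\E\,\Tr\bigl(H_w(z)^k\bigr) \;=\; \sum_{S\subseteq[2k]}(-z)^{\alpha(S)}(-\bar z)^{\beta(S)}\,\mathscr{M}_{\mathrm{Gin}}\bigl(ma_1(S),mb_1(S),\ldots\bigr),
\]
an expression in which $w$ enters only through its length~$m$. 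This proves the fact.

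\textbf{Main obstacle.} There is no serious obstacle; everything reduces to Theorem~\ref{thm:mixed}. The only care needed is the minor bookkeeping in Step~2 (grouping consecutive factors and handling the $S=\emptyset$ and other degenerate cases so that each term genuinely falls within the scope of Theorem~\ref{thm:mixed}) and the observation that the expansion is a finite sum so the limit can be moved inside.
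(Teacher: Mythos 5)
Your proposal is correct and is essentially the argument the paper intends: the paper states this Fact as an immediate corollary of Theorem~\ref{thm:mixed}, and your binomial expansion of $\Tr\bigl((G_w-z)(G_w^*-\bar z)\bigr)^k$ into a finite $z,\bar z$-weighted sum of mixed moments $\frac1N\E\Tr\bigl(G_w^{a_1}G_w^{*b_1}\cdots\bigr)$, with exponents determined by the subset $S$ alone, is precisely the intended derivation. The bookkeeping you flag (degenerate $S$ and passing the limit through the finite sum) is handled correctly.
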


\begin{fact}
For every $*$-free word $w$, the first order limit of the moments of the Hermitian matrix $G_w + G_w^*$ only depend on the length of $w$.
\end{fact}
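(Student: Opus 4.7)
The plan is to recognize that this fact is essentially a direct corollary of Theorem \ref{thm:mixed}, obtained by binomially expanding the power $(G_w+G_w^*)^k$ and then using cyclicity of the trace to bring each term into the mixed-moment form of that theorem.

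More precisely, I would first expand
$$
(G_w + G_w^*)^k = \sum_{\eps \in \{0,1\}^k} \prod_{j=1}^k G_w^{(\eps_j)},
$$
where $G_w^{(0)}:=G_w$ and $G_w^{(1)}:=G_w^*$, and then take the trace. Each of the $2^k$ resulting terms, after a cyclic permutation of its factors, is of one of three types: either (a) a pure power $\Tr(G_w^k)$, corresponding to $\eps=(0,\dots,0)$; (b) a pure power $\Tr(G_w^{*k})$, corresponding to $\eps=(1,\dots,1)$; or (c) a mixed expression $\Tr\bigl(G_w^{a_1}G_w^{*b_1}\cdots G_w^{a_\ell}G_w^{*b_\ell}\bigr)$ for some $\ell\geq 1$ and positive integers $a_i,b_i$ determined by $\eps$, with $\sum_i(a_i+b_i)=k$.

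For type (c), Theorem \ref{thm:mixed} gives
$$
\lim_{N\to\infty}\frac{1}{N}\E\Tr\bigl(G_w^{a_1}G_w^{*b_1}\cdots G_w^{a_\ell}G_w^{*b_\ell}\bigr) = \mathscr{M}_{\Gin}(m a_1, m b_1,\ldots, m a_\ell, m b_\ell),
$$
which depends only on $m=|w|$ and on $\eps$. For types (a) and (b), the words $G_w^k$ and $G_w^{*k}$ are $*$-free, hence the associated polygonal face carries only one sign of letters; no admissible pairing (in the sense of Section \ref{SurfacesSection}) exists at all, so the expectation of the trace vanishes identically in $N$, and these terms contribute $0$ to the first-order limit. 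Summing the $2^k$ contributions yields a quantity that depends only on $m$ and $k$, which is exactly the statement of the fact.

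There is no real obstacle: the only thing to check carefully is the reduction step where a circular word in $\{G_w,G_w^*\}^k$ is rewritten, via the cyclic invariance of the trace, as an alternating product starting with the non-star letter (or is entirely pure), so that it fits literally into the setup of Theorem \ref{thm:mixed}. Once this bookkeeping is in place, the statement follows immediately.
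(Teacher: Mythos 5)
Your proposal is correct and matches the paper's intent exactly: the paper states this fact as an immediate consequence of Theorem \ref{thm:mixed}, and your argument simply spells out that derivation (binomial expansion of $(G_w+G_w^*)^k$, cyclic rearrangement of each term into the alternating form covered by the theorem, and the observation that the pure-power terms vanish identically since a $*$-free word admits no admissible pairing). The bookkeeping step you flag is handled correctly, so there is nothing to add.
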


The link with hermitization as well as the quaternionic resolvant method (see \cite{BurdaSwiech}) suggests that the length of the $*$-free word is a key parameter for relevant statistics of eigenvalues and eigenvectors. We also note that convergence of the empirical measure of eigenvalues to the $m$-twisted circular law is known for powers as well as products (see \cite{BurdaNowak}). Another fact of great relevance is that for finite $N$ and a quite general model of bi-unitarily invariant ensembles, the distribution of eigenvalues determines the distribution of singular values, and vice versa \cite{Kieburg2016}. This has been specifically applied to products of i.i.d.\! matrices in \cite{Kieburg2019}. These and other known results suggest that the length of the $*$-free word is the only relevant parameter for eigenvalues at first order. Such a claim, however, would require a more intricate proof and involve objects beyond the scope of this work. 

\subsection{Fluctuation of eigenvalues of $*$-free words}

We call coperiod of a word $w$ the largest integer $k$ for which $w$ is a $k$-th power.
$$
\cop(w) = \max \{ k \ | \ \exists w_0, \ w=w_0^k \}
$$

Although the distribution of singular values at first order and more generally all mixed moments only depend on the length, it is the coperiod that appears to be the key parameter for fluctuations as well as repulsion of eigenvalues.

\begin{proposition}\label{StarfreeCLT}
For any $*$-free word $w$, 
$$
\Tr(G_w) \distconv \mathscr{N}_{\C}(0, \cop(w) )
$$
\end{proposition}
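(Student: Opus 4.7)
The plan is to apply Theorem \ref{abcthm} to the $*$-free word $w$ and then identify the variance by a combinatorial count. Since $w$ involves only Ginibre matrices and none of their conjugate transposes, the face $f_w$ has no star edges, so there is no admissible pairing inside $f_w$ alone; hence $a_w=0$. By the same token $c_w=0$, because any admissible pairing of two copies of $w$ would have to match star-free edges with each other. Theorem \ref{abcthm} therefore gives $\Tr(G_w)\distconv X+iY$ with $X,Y$ independent real Gaussians of common variance $b_w/2$, i.e.\ $\Tr(G_w)\distconv\mathscr{N}_\C(0,b_w)$, and the proposition reduces to the identity $b_w=\cop(w)$.

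To compute $b_w$, note that in any $\phi\in\AdPair(w,w^*)$ every edge of $f_w$ is star-free and every edge of $f_{w^*}$ is star, so $\phi$ must be a perfect matching between the edges of the two faces. I would draw the pairing as chords in an annulus whose two boundary circles carry the edges of $f_w$ and $f_{w^*}$, with orientations induced by those of the faces. The bridge and internal-crossing conditions of Subsection \ref{spherebuilding} are vacuous here because there are no intra-face pairs, so $\phi$ is spherical if and only if its chords form a non-crossing perfect matching between the two boundary cycles.

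Next I would use the standard observation that non-crossing perfect matchings between two oriented cycles of length $m$ on an annulus are in bijection with the $m$ possible cyclic shifts of one cycle relative to the other: once a single edge of $f_w$ is matched with an edge of $f_{w^*}$, the non-crossing constraint propagates deterministically around the annulus. Writing $w=G_{i_1}\cdots G_{i_m}$, a suitable indexing makes the shift by $s\in\{0,1,\dots,m-1\}$ pair each $G_{i_j}$ with a copy of $G^*_{i_{j+s}}$ (indices mod $m$, with the reversal induced by the orientation convention). This matching is admissible precisely when $i_{j+s}=i_j$ for every $j$, i.e.\ when the cyclic word $w$ is invariant under a rotation by $s$ positions.

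The set of such shifts is a subgroup of $\mathbb{Z}/m\mathbb{Z}$ generated by the minimal period of $w$; if $w=w_0^{\cop(w)}$ with $w_0$ primitive, this subgroup has exactly $\cop(w)$ elements, yielding $b_w=\cop(w)$. The only delicate step, which I expect to be the main point requiring care, is setting up the indexing and orientation on $f_{w^*}$ so that "shift by $s$ in the annulus" corresponds cleanly to a cyclic rotation of $w$; once that dictionary is fixed, the identification of label-respecting shifts with rotational self-symmetries of the cyclic word $w$ is immediate, and with it the identity $b_w=\cop(w)$.
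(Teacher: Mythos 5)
Your proposal is correct and follows essentially the same route as the paper: $a_w=c_w=0$ since a $*$-free word admits no admissible pairing with itself, and $b_w=\cop(w)$ because the spherical pairings of $f_w$ with $f_{w^*}$ are exactly the label-respecting cyclic shifts of the standard mirror pairing. Your annular non-crossing formulation just makes explicit the "rotational symmetry of order $\cop(w)$" count that the paper invokes via Figure \ref{example_cop} and the rules of Subsection \ref{spherebuilding}.
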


\begin{proof}
If $w$ is a $*$-free word, there is no admissible pairing of $w$ onto itself or a copy of itself, hence $a_w=c_w=0$. On the other hand, there are $\cop(w)$ spherical pairings of $w$ onto $w^*$ because the standard pairing of $w$ and $w^*$ that pairs each edge of $w$ with its mirror image in $w^*$ has a rotational symmetry of order $\cop(w)$ (see Figure \ref{example_cop}). Other admissible pairings are not spherical by the conditions that are explained in Subsection \ref{spherebuilding}. Therefore $b_w=\cop(w)$, and the result follows directly from Theorem \ref{abcthm}.
\end{proof}

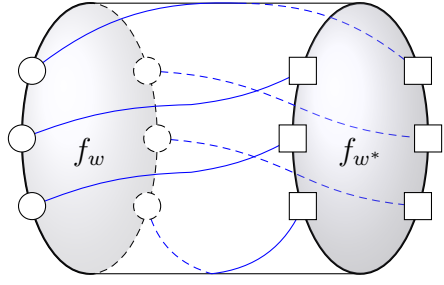
\begin{figure}[ht]
\begin{center}
\begin{tikzpicture}[scale=.9]
 \draw[thick] (-2,0) ellipse (1cm and 2cm);
 \draw[thick] (2,0) ellipse (1cm and 2cm); 
 \filldraw[fill=white,draw=white] (-1.9,0) ellipse (.99cm and 1.99cm);
 \draw[densely dashed] (-2,0) ellipse (1cm and 2cm);
    \shade[ball color=blue!10!white,opacity=0.20] (-2,0) ellipse (1cm and 2cm);
 \draw (-2,-.2) node {$f_w$} ;
 \draw (-2,2) -- (2,2);
 \draw (-2,-2) -- (2,-2);
   \draw[color=blue] (-3,0) to[bend left=10] (-.5,.5) to[bend right=10] (1.15,1); 
   \draw[color=blue] (-2.85,-1) to[bend left=10] (-.5,-.5) to[bend right=10] (1,0);
\draw[color=blue] (-2.85,1) to[bend left=20] (.2,2);
\draw[densely dashed,color=blue] (.2,2) to[bend left=20] (2.85,1);
\draw[densely dashed,color=blue] (-1.15,-1) to[bend right=30] (-0.2,-2);
\draw[color=blue] (-0.2,-2) to[bend right=30] (1.15,-1);
   \draw[densely dashed,color=blue] (-1,0) to[bend left=10] (1,-.5) to[bend right=10] (2.85,-1);
  \draw[densely dashed,color=blue] (-1.15,1) to[bend left=10] (1,.5) to[bend right=10] (3,0);
 \shade[ball color=blue!10!white,opacity=0.30] (2,0) ellipse (1cm and 2cm);
 \draw (2,-.2) node {$f_{w^*}$} ;
    \filldraw[densely dashed,fill=white,draw=black] (-1,0) circle (.2cm); 
    \filldraw[densely dashed,fill=white,draw=black] (-1.15,1) circle (.2cm); 
    \filldraw[densely dashed,fill=white,draw=black] (-1.15,-1) circle (.2cm); 
    \filldraw[fill=white,draw=black] (-3,0) circle (.2cm);
    \filldraw[fill=white,draw=black] (-2.85,1) circle (.2cm); 
    \filldraw[fill=white,draw=black] (-2.85,-1) circle (.2cm);
    \filldraw[fill=white,draw=black] (1-.2,0-.2) rectangle (1+.2,0+.2);
    \filldraw[fill=white,draw=black] (1.15-.2,1-.2) rectangle (1.15+.2,1+.2);    
    \filldraw[fill=white,draw=black] (1.15-.2,-1-.2) rectangle (1.15+.2,-1+.2);
    \filldraw[fill=white,draw=black] (3-.2,0-.2) rectangle (3+.2,0+.2);
    \filldraw[fill=white,draw=black] (2.85-.2,1-.2) rectangle (2.85+.2,1+.2);
    \filldraw[fill=white,draw=black] (2.85-.2,-1-.2) rectangle (2.85+.2,-1+.2);
\end{tikzpicture}
\caption{The number of admissible pairings of $w$ and $w^*$ that yield a sphere is $\cop(w)$ when $w$ is star-free.}
\label{example_cop}
\end{center}
\end{figure}

The proof of Theorem \ref{StarfreeCLT_joint} announced in the introduction can be directly inferred.

\begin{proof}[Proof of Theorem \ref{StarfreeCLT_joint}]
This follows from Propositions \ref{StarfreeCLT} and \ref{independence}, as powers of $w$ are star-free and trace-distinct. Then, 
$$ \lll( \Tr (G_w), \Tr (G_w^2), \dots, \Tr (G_w^n) \rr) \distconv (Z_1, \dots, Z_n)$$
where the $Z_j$'s are independent centered Gaussian variables with variances $\cop(w^j) = j \cop (w)$.
\end{proof}

We derive from this a result for polynomial linear statistics.

\begin{proposition}
Let $w$ be a $*$-free word, $f$ a polynomial, and $S_N(f) = \sum_{i=1}^N f(\la_i)$ the linear statistics of $G_w$ with respect to $f$. Then,
$$
S_N(f) - N f(0) \xrightarrow[N \rightarrow \infty]{d} \mathscr{N}_{\C} (0, \sigma_f^2)
$$
where 
$$
\sigma_f^2 = \cop(w) \int_{\D} |\nabla f(z)|^2 \dd m(z).
$$
\end{proposition}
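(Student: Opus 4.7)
The plan is to reduce this to a direct application of Theorem \ref{StarfreeCLT_joint}, since $f$ is a polynomial and $\Tr(G_w^k) = \sum_i \lambda_i^k$. Write $f(z) = \sum_{k=0}^{d} c_k z^k$, so that
\[
S_N(f) = \sum_{i=1}^N f(\lambda_i) = N c_0 + \sum_{k=1}^d c_k \Tr(G_w^k),
\]
and therefore
\[
S_N(f) - N f(0) = \sum_{k=1}^d c_k \Tr(G_w^k).
\]
This identity is what makes the result essentially a corollary of the joint CLT.

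Next, I invoke Theorem \ref{StarfreeCLT_joint}: since $w$ is star-free, the vector $(\Tr(G_w), \dots, \Tr(G_w^d))$ converges jointly in distribution to $(Z_1, \dots, Z_d)$, where the $Z_k$ are independent complex Gaussians with $Z_k \sim \mathscr{N}_{\C}(0, k\cdot\cop(w))$. By the continuous mapping theorem applied to the linear functional $(t_1,\dots,t_d) \mapsto \sum c_k t_k$,
\[
S_N(f) - N f(0) \distconv \sum_{k=1}^d c_k Z_k,
\]
and the right-hand side is a complex Gaussian (each $Z_k$ being a proper complex Gaussian with $\E Z_k^2 = 0$). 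Its variance is computed directly using independence and $\E|Z_k|^2 = k \cdot \cop(w)$:
\[
\E \Bigl| \sum_{k=1}^d c_k Z_k \Bigr|^2 = \sum_{k=1}^d |c_k|^2 \cdot k \cdot \cop(w) = \cop(w) \sum_{k=1}^d k|c_k|^2,
\]
while $\E\bigl(\sum c_k Z_k\bigr)^2 = \sum c_k^2 \E Z_k^2 = 0$, confirming the limit is a proper $\mathscr{N}_{\C}(0, \sigma_f^2)$.

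The final step is to recognize $\sum_{k\ge 1} k |c_k|^2$ as the Dirichlet energy $\int_{\D}|\nabla f|^2 \rd m$. For $f(z) = \sum c_k z^k$ holomorphic, $f'(z) = \sum k c_k z^{k-1}$, and in polar coordinates
\[
\int_{\D} |f'(z)|^2 \, \rd A(z) = \sum_{k,l\ge 1} k l \, c_k \overline{c_l} \int_0^1 \!\int_0^{2\pi} r^{k+l-1} e^{i(k-l)\theta} \rd \theta \rd r = \pi \sum_{k\ge 1} k |c_k|^2,
\]
so with the paper's normalization ($\rd m$ being the normalized area measure on $\D$ and $|\nabla f|^2 = |f'|^2$ for holomorphic $f$) we obtain $\int_\D |\nabla f|^2 \rd m = \sum_{k\ge 1} k |c_k|^2$, giving exactly $\sigma_f^2 = \cop(w) \int_\D |\nabla f|^2 \rd m$.

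There is no real obstacle in this proof: the combinatorics and topology were already absorbed into Theorem \ref{StarfreeCLT_joint}, and what remains is purely algebraic. The only mild check is the Hilbert-space identity between $\ell^2$ weighted by $k$ and the Dirichlet energy on the disk, which is a routine Fourier computation.
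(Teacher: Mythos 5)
Your proof is correct and follows essentially the same route as the paper: decompose $S_N(f)-Nf(0)$ into $\sum_{k\ge 1} c_k \Tr(G_w^k)$, apply the joint CLT of Theorem \ref{StarfreeCLT_joint} with the continuous mapping theorem, and identify $\sum_k k|c_k|^2$ with the Dirichlet energy. You are in fact slightly more careful than the paper on two points — writing $|c_k|^2$ rather than $c_k^2$ for complex coefficients, and verifying $\E(\sum_k c_k Z_k)^2=0$ so the limit is a proper complex Gaussian — so no changes are needed.
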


\begin{proof} It follows from Theorem \ref{StarfreeCLT_joint} that, if $f(z)=\sum_{k=1}^n a_k z^k$, the centered linear statistics 
$$
S_N(f) - N f(0) = \sum_{k=1}^n a_k \Tr(G_w^k)
$$
converge to a Gaussian variable with variance
$$
\sigma_f^2 = \sum_{k=1}^n  k a_k^2 \cop (w) =  \cop(w) \int_{\D} |\nabla f(z)|^2 \dd m(z),
$$
which is the claim.
\end{proof}

This expression of the variance is reminiscent of Rider and Virag's results \cite{RiderVirag} for the fluctuations of linear statistics of a single complex Ginibre matrix. For general $*$-free words, we are only able to analyse polynomial test functions of the fluctuations. \medskip

It is known that eigenvalues of products of i.i.d.\! matrices form determinantal point processes (see \cites{AkemannIpsen,Ipsenthesis}). In particular, powers of the eigenvalues of a product of independent Ginibre matrices can be decomposed in independent blocks (see \cite{DubachPowers}), but no such property is known to hold for general $*$-free words.
\section{Extension to general non-Hermitian matrices}
\label{extension_section}

In this Section we extend our results to words of general non-Hermitian matrices with i.i.d.\! entries, sparse matrices and band matrices. 
\begin{enumerate}
    \item We extend all our results to matrices with i.i.d.\! entries under a second-moment and fourth-moment matching condition.
    (Section \ref{fourthmoment_section}).
     \item We formulate a genus expansion formula which holds for band matrices with i.i.d.\! Gaussian entries. Consequently, the results from Section \ref{SurfacesSection} extend to the band case.
    (Section \ref{band_section})
        \item We establish a weaker version of the first-order limits to sparse matrices with i.i.d.\! entries and optimal sparsity parameter, under a second-moment matching condition. 
    (Section \ref{sparse_section})
\end{enumerate}

\subsection{Words of non-Hermitian matrices}\label{fourthmoment_section}
In this subsection, we extend the results of the previous sections to $*$-words of random matrices with i.i.d.\! entries under moment matching conditions. Let $Z$ be a complex random variable whose distribution satisfies the following conditions:
\begin{enumerate}
    \item $\E|Z|^2=1.$
    \item $\E|Z|^4=2.$
    \item $\E Z^i \overline{Z}^j =0,~~\forall i \neq j.$
    \item $\E |Z|^{2j}< \infty,~~\forall j.$
\end{enumerate}
Let $M_1,M_2,...$ be random non-Hermitian matrices of order $N$ with i.i.d.\! $Z/\sqrt{N}$ distributed entries. Note that if $Z$ is a complex Gaussian distribution $M_i$ is a complex Ginibre matrix. Given a $*$-word $w$, we consider the random matrix $M_w$ by naturally replacing every letter $G_i$ (resp. $G_i^*$) with the random matrix $M_i$ (resp. $M_i^*$ the conjugate transpose of $M_i$). The main result of this subsection an approximate genus expansion formula for these random matrices.

We use the terminology and notations that were introduced in Section \ref{SurfacesSection} and in the proof of Theorem \ref{genusexpansion}.

\begin{definition}
 Let $w_1,...,w_k$ be $*$-words, $\phi\in\AdPair(w_1,...,w_k)$ an admissible pairing and $I$ a balanced indexing that is compatible with $\phi$. 
\begin{itemize}
        \item $I$ is called a {\em generic} indexing of $\phi$ if it is supported on $V(S_{\phi})$ indices from $\{1,...,N\}$. In other words, $I$ induces a distinct label for every vertex of $S_{\phi}$.
    \item $\phi$ is called {\em non-degenerate} if $V(S_{\phi})\ge V(S_{\phi'})$ for every $\phi'\in\AdPair(w_1,...,w_k)$.
\end{itemize}
\end{definition}
 
\begin{lemma}\label{lem:bad_phi}
Let $w_1,...,w_k$ be $*$-words, $\phi\in\AdPair(w_1,...,w_k)$ and $I$ a generic indexing of $\phi$. If there is a variable $v\in\mathscr{V}(I)$ with multiplicity $m_v\ge 3$ then $\phi$ is degenerate.
\end{lemma}

\begin{proof}[Proof of Lemma \ref{lem:bad_phi}]
The surface $S_{\phi}$ is created by gluing the polygonal faces $f_{w_1},...,f_{w_k}$ according to $\phi$. Recall that prior to the gluing each of $f_{w_1},...,f_{w_k}$ is oriented, which induces a cyclic ordering on the edges of each face. A {\em star} of $\phi$ is a cyclic sequence $T=(e_1,e_2,...,e_{2p-1},e_{2p})$ where each element is an edge of one of the polygonal faces $f_{w_1},...,f_{w_k}$, such that for every $1\le i\le p$,
\begin{enumerate}
    \item $e_{2i-1}$ and $e_{2i}$ belong to the same face $f_{w_j}$ and $e_{2i}$ is the immediate  successor of $e_{2i-1}$ in the orientation of $f_{w_j}$.
    \item $e_{2i}$ and $e_{2i+1}$ are paired by $\phi$.
\end{enumerate}
Note that every vertex $x$ of $S_{\phi}$ corresponds to a star $T(x)$ of $\phi$. Our strategy is to construct an admissible pairing $\phi'$ that has more stars than $\phi$. 

We label every edge in $S_{\phi}$ with an index $r$ according to the labels $G_r,G_r^*$ of its corresponding edges of the polygonal faces. The assumption that there is a variable $v=(r,s,t)\in\mathscr{V}(I)$ such that $m_v\ge 3$ where $I$ is generic implies that there are two vertices $x,y\in S_{\phi}$ that have $3$ edges of the same index $r$ between them. Therefore, there are $3$ distinct indices $1\le i_1<i_2<i_3\le p$ in the star $T(x)=(e_1,...,e_{2p})$ of $x$,  such that the labels of all the edges $e_{2i_j},e_{2i_j+1},j=1,2,3,$ have index $r$. Since there are $3$ such indices, we can suppose, without loss of generality, that $e_{2i_1}$ and $e_{2i_2}$ are labeled $G_r$, whereas $e_{2i_1+1}$ and $e_{2i_2+1}$ are labeled $G_r^*$. Note that the assumption $m_v\ge 3$ is essential here.

The admissible pairing $\phi'$ is constructed by the following alteration of $\phi$. Namely, we pair $e_{2i_1}$ with $e_{2i_2+1}$ and $e_{2i_2}$ and $e_{2i_1+1}$. Note that $\phi'$ is indeed an admissible pairing because of our assumption on the labels of the altered edges. In addition, the only stars that are affected by this alteration are $T(x)$ and $T(y)$, and each of them is split into two stars. For instance, $T(x)$ is split into $(e_1,..,e_{2i_1},e_{2i_2+1},...,e_{2p})$ and $(e_{2i_1+1},...,e_{2i_2})$. In conclusion, $V(S_{\phi'})>V(S_{\phi})$, and $\phi$ is degenerate.
\end{proof}

We now state the main result of this subsection.
\begin{theorem}\label{thm:fourth_moment_extension}
Suppose that $Z$ is a complex random variable satisfying assumptions (i)-(iv), $w_1,...,w_k$ are $*$-words, and $M_{w_1},...,M_{w_k}$ the corresponding $*$-words of $N\times N$ random matrices with $\frac 1{\sqrt{N}}Z$-distributed i.i.d.\! entries. Then,
$$
\E \left( \prod_{i=1}^k \Tr M_{w_i} \right) 
=   \left( 1+ \OO \left( \frac{1}{N} \right) \right)\sum_{\phi \in \AdPair(w_1, \dots, w_k)} {N^{2c({S_\phi}) -k - 2 g (S_\phi)}}.
$$
\end{theorem}

\begin{proof}
Let $\tilde a:=\tilde a_{w_1,...,w_k}$ denote the number of non-degenerate admissible pairings of $w_1,...,w_k$ and let $\tilde V$ denote the number of vertices in the surface induced by such a pairing. We need to show that $$E \left( \prod_{i=1}^k \Tr M_{w_i} \right) 
=   \left( 1+ \OO \left( \frac{1}{N} \right) \right)\tilde aN^{\tilde V-m/2}.$$
We use the notations in the proof of Theorem \ref{genusexpansion}. Denote
\[M(I):= \prod_{j=1}^{k}\prod_{l=1}^{m_j} M_{i_{j,l},i_{j,l+1}}^{(j,l)}\]
where $I\in\mathscr{I}$ is an indexation. By the normalization of the random matrices $M_{w_i}$, there holds 
$\E[M(I)]=N^{-\frac m2}\prod_{v\in\mathscr{V}(I)}\E[|Z|^{2m_v}]$ if $I$ is balanced, and $\E[M(I)]=0$ otherwise. Consider the relation $I\rightsquigarrow \phi$ which asserts that $I$ is a balanced indexing that is compatible with an admissible pairing $\phi$. We expand
\[
\E \left( \prod_{i=1}^k \Tr M_{w_i} \right) = 
\sum_{\phi\in\AdPair(w_1,...,w_k)}\sum_{I:I\rightsquigarrow \phi} \frac{\E[M(I)]}{\prod_{v\in\mathscr{V}(I)}m_v!}
=N^{-\frac m2} \sum_{\phi}\sum_{I} \prod_v\frac{\E[|Z|^{2m_v}]}{m_v!}~, 
\]
where the first equality follows from the fact that each balanced indexing $I$ is compatible with $\prod_{v\in\mathscr{V}(I)}m_v!$ admissible pairings. 

Fix a pairing $\phi$ and consider the sum \[\sum_{I\rightsquigarrow\phi} \prod_{v\in\mathscr{V}(I)}\frac{\E[|Z|^{2m_v}]}{m_v!}.\] If $\phi$ is degenerate then there are only $\OO(N^{\tilde V-1})$ indexations compatible with $\phi$. Otherwise, if $\phi$ is non-degenerate, we split the summation to generic and non-generic indexations. By definition, there are only $\OO(N^{\tilde V-1})$ non-generic indexations. Moreover, $m_v\le 2$ for every $v\in\mathscr{V}(I)$ provided that $I$ is generic by Lemma \ref{lem:bad_phi}. In consequence, by our moment assumptions on $Z$, each of the $(1+\OO(1/N))N^{\tilde V}$ generic indexations of $\phi$ contribute exactly $1$ to the sum. The claim follows since there are exactly $\tilde a$ such non-degenerate pairings.
\end{proof}

By reiterating the arguments in Section \ref{Applications}, Theorem \ref{thm:fourth_moment_extension} implies that all the results in that section (see Figure \ref{fig:plan_of_paper}) also hold for $*$-words of general non-Hemitian matrices  with i.i.d.\! entries assuming the distribution of the entries satisfies assumptions (i)-(iv). In particular, the theorem implies the universality of the Fuss-Catalan limit for the singular values that is described in Theorem \ref{thm:mainFC} and the dependency of the mixed matrix moments of $M_w$ in its length that is mentioned in Theorem \ref{thm:mixed}. There are many similar results in random matrix theory. Notably, the universality of the singular value distribution as well as eigenvalue distribution was established in \cite{Gotze2015} for general matrix-valued functions of independent matrices, under more involved assumptions.

\subsection{Genus expansion for band matrices}\label{band_section}

We present in this section an extension of Theorem \ref{abcthm} to $*$-words of Gaussian band matrices, motivated by the recent work of Au ~\cite{Au2019} on infinitesimal freeness. We consider a band parameter $(b_N)$ such that 
\be\label{band_condition_1}
\frac{b_N}{{N}^{1/3}} \rightarrow \infty, \qquad
\frac{b_N}{N} \rightarrow \la \geq 0.
\ee
The assumption $b_N\gg N^{1/3}$ is required so that $a_w N$, by which we centralize, is an accurate enough approximation of $\E \Tr G_w$.
The band condition we consider is periodic. Namely, we let $d_N(i,j)=\min (|i-j|,N-|i-j|)$ for every $1\le i,j \le N$, and define 
an $N\times N$ Gaussian band matrix $G$ with band parameter $b_N$  by letting
\begin{equation}\label{band_condition_2}
    G_{i,j} := \frac{1}{\sqrt{l_N}} \mathds{1}_{d_N(i,j) \leq b_N} Z_{i,j}
\end{equation}
where $
    l_N := \min( 2b_N+1, N)
$ is the number of indices within cyclic distance $b_N$ of any given index and $Z_{i,j}$ are i.i.d.\! standard complex Gaussian variables.

 \medskip

Let $G_r, r\ge 1$, be Gaussian band matrices distributed according to (\ref{band_condition_2}), $w_1, \dots w_k$ be $*$-words on the formal alphabet defined in the introduction, and $G_{w_i},~i=1,...,k$ the corresponding $*$-words of complex Gaussian band matrices. For every admissible pairing $\phi\in\AdPair(w_1,...,w_k)$, we denote by $\mathscr{I}_N (\phi)\subset \mathscr{I}$ the subset of indexations $I$  that are compatible with $\phi$ such that for every adjacent vertices $x,y$ in $S_{\phi}$ the labels $1\le i\le N$ and $1 \le j \le N$ that $I$ induces on $x$ and $y$ respectively satisfy $d_N(i,j) \leq b_N$.  Theorem \ref{genusexpansion} and its proof extend to the band case directly. A band version of genus expansion first appeared in \cite{Au2019}, which inspired the extension of our results to the band case. 

\begin{proposition}[Band Genus expansion]\label{prop:band_genus_expansion}
Let $w_1,...,w_k$ be $*$-words of total length $m$ and $G_{w_1},....,G_{w_k}$ the corresponding $*$-words of $N\times N$ complex Gaussian band matrices with band parameter $b_N$. Then,
\begin{equation}\label{band_genus_expansion}
\E \left( \prod_{i=1}^k \tr G_{w_i} \right)
=
\sum_{\phi \in \AdPair(w_1, \dots, w_k)} |\mathscr{I}_N (\phi)| l_N^{-m/2}
\end{equation}
\end{proposition}

The first step in applying this formula is to estimate the asymptotic growth of $|\mathscr{I}_N(\phi)|$. It is clear that $|\mathscr{I}_N(\phi)|=N^{V(S_{\phi})}$ if $l_N=N$. However, if $l_N<N$, $|\mathscr{I}(\phi)|$ depends on the structure of the graph $\Ga_{\phi}$ that results by pairing the boundaries of the polygonal faces $f_{w_1},...,f_{w_k}$ according to $\phi$. For every connected $V$-vertex graph $\Gamma$ and $0\le \la\le 1/2$ we associate a $V-1$ dimensional body $D_{\Gamma}^{(\la)}$ in $\R^V$ as follows. 
\begin{enumerate}
\item If $0<\la\le 1/2$, we let $I_{\la}:= \left[-\frac{1}{4\la},\frac{1}{4\la}  \right]$ and define:
$$
D_{\Gamma}^{(\la)} :=
\left\{ 
(x_1, \dots, x_V) \in I_{\la}^V \ | \ x_1=0, \ \forall (i,j) \in E, \  d(x_i, x_j) < 1/2
\right\}
$$
where $d(x,y) = \min (|x-y|,|I_\la|-|x-y|)$ is the cyclic distance on $I_{\la}$ and $E$ denotes the edge set of $\Gamma$.
\item If $\la =0$,
$$
D_{\Gamma}^{(0)} :=
\left\{
(x_1, \dots, x_V) \in \R^V \ | \ x_1=0, \ \forall (i,j) \in E, \  |x_i - x_j|< 1/2 
\right\}.
$$

\end{enumerate}
For a general graph $\Gamma$, we define
$$
D_{\Gamma}^{(\la)} : = D_{\Gamma_1}^{(\la)} \times \cdots \times D_{\Gamma_k}^{(\la)},
$$
where $\Gamma_i$ are the connected components of $\Gamma$. 
\begin{lemma}\label{volume_lemma} 
Let $w_1,...,w_k$ be $*$-words, $\phi \in \AdPair(w_1,...,w_k)$ and $b_N$ a band parameter satisfying $b_N/N\to \la\in[0,1/2]$ as $N\to\infty$. Then,
$$
|\mathscr{I}_N(\phi)| = \alpha_{\phi} N^{c} l_N^{V-c} \left(1+ O\left(\frac{1}{l_N} \right) \right)
$$
where $\alpha_{\phi}$ is $(V-1)$-dimensional volume of $D_{\Gamma_{\phi}^({\la})}$ and $l_N=2b_N+1$.
\end{lemma}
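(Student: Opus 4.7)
The plan is to interpret $|\mathscr{I}_N(\phi)|$ as a count of lattice points in a rescaled copy of the body $D_{\Gamma_\phi}^{(\lambda)}$, with the factor $N^c$ arising from translation invariance of the cyclic distance on each connected component.

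I would first observe that the admissibility constraint defining $\mathscr{I}_N(\phi)$ only involves pairs of vertices joined by an edge of $\Gamma_\phi$, so the count factors across the connected components $\Gamma_1,\ldots,\Gamma_c$ of $\Gamma_\phi$. This matches the product structure $D_{\Gamma_\phi}^{(\lambda)} = D_{\Gamma_1}^{(\lambda)} \times \cdots \times D_{\Gamma_c}^{(\lambda)}$ and reduces the problem to a single connected graph $\Gamma$ on $V$ vertices, where one must show that the number of admissible labelings equals $N \cdot l_N^{V-1} \cdot \mathrm{Vol}(D_\Gamma^{(\lambda)}) \cdot (1 + O(1/l_N))$. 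By translation invariance of $d_N$ on $\Z/N\Z$, fixing the label $i_1$ of a distinguished vertex produces the prefactor $N$ exactly and reduces the count to that of admissible tuples $(j_2,\ldots,j_V) \in (\Z/N\Z)^{V-1}$ with $j_1 = 0$ and $d_N(j_u, j_v) \leq b_N$ for every edge $(u,v)$ of $\Gamma$.

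Rescaling via $x_u = j_u/l_N$, each edge constraint becomes $d(x_u, x_v) \leq b_N/l_N = \tfrac{1}{2} - \tfrac{1}{2 l_N}$, while the $x_u$'s live on the rescaled torus of circumference $N/l_N$ with lattice spacing $1/l_N$. Two regimes need to be addressed separately. When $\lambda > 0$, the torus has circumference $N/l_N \to 1/(2\lambda) = |I_\lambda|$, exactly matching the cyclic interval $I_\lambda$ in the definition of $D_\Gamma^{(\lambda)}$. When $\lambda = 0$, i.e.\ $l_N \ll N$, connectedness of $\Gamma$ forces $|j_u| \leq (V-1) b_N \ll N$ for any admissible tuple, so the cyclic wraparound is invisible and the constraint becomes the non-cyclic one $|x_u - x_v| < 1/2$ defining $D_\Gamma^{(0)}$.

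In both regimes, the count reduces to the number of lattice points in $\tfrac{1}{l_N}\Z^{V-1} \cap D_\Gamma^{(\lambda)}$. Since $D_\Gamma^{(\lambda)}$ is bounded and has piecewise-affine boundary (being the intersection of finitely many affine half-spaces in a cube or in $\R^{V-1}$), a standard lattice-point-counting estimate yields $l_N^{V-1} \cdot \mathrm{Vol}(D_\Gamma^{(\lambda)}) + O(l_N^{V-2})$ lattice points, giving the multiplicative error $O(1/l_N)$ claimed. The discrepancy between the strict inequality $d(x_u,x_v) < 1/2$ in the definition of $D_\Gamma^{(\lambda)}$ and the effective constraint $d(x_u,x_v) \leq \tfrac{1}{2} - \tfrac{1}{2 l_N}$ contributes only a thin shell of thickness $O(1/l_N)$ along a boundary of finite $(V-2)$-content, hence $O(l_N^{V-2})$ additional points. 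The main technical hurdle is precisely this uniform-in-$N$ control of the boundary contribution, which nevertheless follows directly from the explicit finite affine description of $D_\Gamma^{(\lambda)}$, whose combinatorial type depends only on $\Gamma$ and not on $N$.
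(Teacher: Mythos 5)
Your proposal is correct and follows essentially the same route as the paper: reduce to a connected component, use translation invariance of the cyclic distance to extract the factor $N$, and then count lattice points in the $l_N$-dilate of the polytope $D_{\Gamma}^{(\la)}$, which the paper does by invoking Ehrhart's polynomial and you do by the equivalent boundary-shell estimate. Your treatment is somewhat more explicit about the two regimes $\la>0$ versus $\la=0$ and about the strict-versus-weak inequality at the boundary, but these are refinements of the same argument rather than a different approach.
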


\begin{proof}
Clearly, it suffices to prove the statement for the case that  $\Gamma_{\phi}$ is connected. Since the band condition is translation invariant we have
\begin{align*}
|\mathscr{I}_N (\phi)| & = N \times |\{ (I_1,\dots, I_V) \in \{1,...,N\}^V \ : \ I_1= 0 ,~\forall (i,j)\in E \ d_N(I_i, I_j) < b_N \} |
\end{align*}
where $d_N$ denotes the cyclic distance on $\{1,...,N\}.$ This number is equal to the number of integral points that fall in $l_N\cdot D_{\Gamma}^{(b_N/N)}$, and the claim follows by evaluating the corresponding Ehrhart's polynomial at $l_N$.
\end{proof}

\begin{remark}\label{atom_rmk} If $\phi$ is a spherical admissible pairing of the edges of a single $*$-word $w$ then  $\Ga_{\phi}$ is a tree. A straightforward counting argument shows that in such case $|\mathscr{I}_N(\phi)|= N l_N^{V-1}$, which is consistent with the fact that $\alpha_{\phi}=1$.
\end{remark}

By combining Lemma \ref{volume_lemma} and Proposition \ref{prop:band_genus_expansion}, the band genus expansion takes the following useful form:
\begin{equation}\label{band_genus_expansion_2}
\E \left( \prod_{i=1}^k \tr G_{w_i} \right)
=
\sum_{\phi \in \AdPair(w_1, \dots, w_k)}
    \al_{\phi}
N^c l_N^{c-2g-k}
\left(1 + \OO(1/l_N) \right).
\end{equation}

Note that if $l_N=N$, then each term in the sum is $N^{2c - 2g - k}$
consistently with the usual genus expansion. 

\begin{theorem}[Central Limit Theorem for Gaussian band matrices]\label{band_CLT}
Let $w$ be a $*$-word and $G_w$ the corresponding $*$-word of $N\times N$ complex Gaussian band matrices with band parameter $b_N$ satisfying  $N^{-1/3} b_N \rightarrow \infty$. Then,
\begin{equation}\label{band_first_order}
\E \tr G_{w} = a_{w} N + \OO \left( \frac{N}{l_N^2} \right),
\end{equation}
and
\begin{equation}
\label{band_second_order}
 \sqrt{\frac{l_N}{N}} \left( \tr G_{w} - a_{w} N \right) \distconv X + i Y
\end{equation}
where $X,Y$ are two independent real centered Gaussian variables with variances $\frac{b'_w+c'_w}{2}$ and $\frac{b'_w-c'_w}{2}$ respectively. The values of $b'_w, c'_w$ are given by the sums of $\al_{\phi}$ over the spherical pairings $\phi$ in $\AdPair(w,w^*),\AdPair(w,w)$ respectively.
\end{theorem}

\begin{proof}
The proof goes along the same lines as the proof of Theorem \ref{abcthm}, so we only emphasize the differences with the full-band case. 
To derive Equation (\ref{band_first_order}), we use Remark \ref{atom_rmk} which states that each of the $a_w$ spherical pairings of $w$ contribute $N$ when expanding $\E[\Tr G_w]$  according to (\ref{band_genus_expansion}). By expansion (\ref{band_genus_expansion_2}) we see that all other pairings contribute at most $O(N/l_N^2)$.

Lemmas \ref{lemma:centralized} and \ref{lemma:diatomic} can be extended to $*$-words of Gaussian band matrices. Namely, let $w_1,...,w_k$ be $*$-words and $G_{w_i},~i=1,...,k$, the corresponding $*$-words of Gaussian band matrices with band parameter $b_N$. Then,
\begin{equation}
\E\left[\prod_{i=1}^{k}(\Tr G_{w_i}-a_{w_i}N)\right]=\sum_{\phi\in\AdPair_{AF}(w_1,...,w_k)}\alpha_{\phi}N^cl_N^{c-2g-k}(1+\OO(1/l_N)).
\label{band_centralized_1}
\end{equation}
The proof follows by an inclusion-exclusion argument similar to Lemma \ref{lemma:centralized}. Recall that $\phi\in\AdPair(w_1,...,w_k)$ is bi-atomic  if $S_{\phi}$ consists of $k/2$ spheres. From equation (\ref{band_centralized_1}) we derive
\begin{equation}
\E\left[\prod_{i=1}^{k}(\Tr G_{w_i}-a_{w_i}N)\right]=\sum_{\mbox{$\phi$ bi-atomic}}\alpha_{\phi}\left(\frac{N}{l_N}\right)^{k/2}\left(1+\OO\left(\frac{1}{l_N}+\frac N{l_N^3}\right)\right).\label{band_centralized_2}
\end{equation}
To derive (\ref{band_centralized_2}), recall inequality (\ref{eqn:gc2k}) which states that every atom-free pairing $\phi$ satisfies $g\ge 2c-k$. Therefore, the order of the contribution of a pairing $\phi$ with $c$ components to (\ref{band_centralized_1}) is at most $N^cl_N^{-3c+k} = (N/l_N)^{k/2}(N/l_N^3)^{c-k/2}.$ In particular, the sum is dominated by the bi-atomic pairings in which $c=k/2$ and $g=0.$

We now derive (\ref{band_second_order}) by the moments method. Namely, we have that for every $k,l$,
\[
\E[(l_N/N)^{(k+l)/2)}(\Tr G_w - a_wN)^k(\Tr G_w^* - a_wN)^l]
= \sum_{\phi}\alpha_{\phi} + o(1),
\]
where the summation is over bi-atomic pairings of $k$ copies of $w$ and $l$ copies of $w^*$. Here we use the assumption that $N^{-1/3}l_N\to \infty$. Similarly to Theorem \ref{abcthm}, the proof follows by noting that $\sum_{\phi}\alpha_{\phi}$ is equal to the sum, over all pairings of $k$ $w$-labelled faces and $l$ $w^*$ labelled faces, of ${b'_w}^{d}{c'_w}^{(k+l)/2-d}$, where $d$ is the number of $w$-labelled faces that are matched with a $w^*$-labelled face.
\end{proof}

Theorem \ref{band_CLT} implies that Theorem \ref{thm:mainFC} also applies to $*$-free words of band matrices with band parameter $b_N\gg N^{1/3}$. Indeed, as we show in subsection \ref{subsec:FC}, Theorem \ref{thm:mainFC} is implied by the almost-sure convergence of $N^{-1}\Tr(G_{(ww^*)^k})$ to $a_{(ww^*)^k}$ as $N\to\infty$ for every $*$-free word $w$ and $k\ge 1$. In the band case, Theorem \ref{band_CLT} implies that
$$
\mathrm{Var} \left( \frac{1}{N} \Tr\left( (G_w G_{w}^*)^k \right) - a_{(ww^*)^k} \right) = \OO((Nl_N)^{-1}),
$$
and the almost sure convergence follows, as $l_N \gg N^{1/3}$.

In addition, Theorem \ref{band_CLT} yields a generalization of a recent result by Jana \cite{Jana_band} to any $*$-free word in complex Gaussian band matrices. We state it below in the small band regime.

\begin{corollary}
For any $*$-free word $G_w = G_{i_1} \cdots G_{i_m}$ of band complex Gaussian matrices with bandwidth 
$N^{1/3}\ll b_N \ll N$, there holds
$$
\sqrt{\frac{l_N}{N}} \Tr G_w \xrightarrow[N \rightarrow \infty]{d} \mathscr{N}_{\C}(0, \mathrm{cop}(w) \al_{m} )
$$
where $m$ is the length of $w$, $\mathrm{cop}(w)$ its coperiod, and
$$
\al_m :=
\frac{1}{\pi} 
\int_{-\infty}^{\infty}
{\mathrm{sinc} ( \theta )^{m}}
{\dd \theta}.
$$
\end{corollary}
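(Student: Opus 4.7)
The strategy is to apply Theorem \ref{band_CLT} directly to the $*$-free word $w$ and compute the parameters $a_w, b'_w, c'_w$ explicitly. Since no letter of $w$ is a star-letter, $\AdPair(w, w)$ is empty, hence $c'_w = 0$; likewise $a_w = 0$, so no centering is required. The remaining task is to enumerate the spherical pairings of $(w, w^*)$ and to evaluate $\alpha_\phi$ for each of them.

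As in the proof of Proposition \ref{StarfreeCLT}, the spherical pairings $\phi \in \AdPair(w, w^*)$ are exactly the $\cop(w)$ rotations of the canonical mirror pairing, which pairs the $k$-th edge of $f_w$ with the $(m+1-k)$-th edge of $f_{w^*}$. First I would verify that for every such $\phi$ the embedded graph $\Gamma_\phi$ on $S_\phi \simeq \S$ is isomorphic to the $m$-cycle $C_m$. Euler's formula forces $V(S_\phi) = m$ since $S_\phi$ is cut into two $m$-gonal faces and has $E = m$ edges; a direct computation of the vertex identifications produced by the mirror pairing (and, by symmetry, by its shifts) shows that the resulting structure is a single Hamilton cycle on the sphere.

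The key calculation is therefore $\alpha_{C_m} := \mathrm{vol}_{m-1}\!\left(D_{C_m}^{(0)}\right)$. Setting $y_i = x_{i+1} - x_i$ (indices cyclic mod $m$), the conditions defining $D_{C_m}^{(0)}$ become $\sum_{i=1}^{m} y_i = 0$ together with $|y_i| < 1/2$ for every $i$. The volume is then $\rho^{*m}(0)$, the value at the origin of the $m$-fold convolution of $\rho := \mathds{1}_{(-1/2,1/2)}$ with itself. Since $\hat\rho(\omega) = 2\sin(\omega/2)/\omega = \mathrm{sinc}(\omega/2)$, Fourier inversion followed by the substitution $\theta = \omega/2$ gives
\[
\alpha_{C_m} = \rho^{*m}(0) = \frac{1}{2\pi} \int_{\R} \mathrm{sinc}(\omega/2)^m \, d\omega = \frac{1}{\pi} \int_{\R} \mathrm{sinc}(\theta)^m \, d\theta = \alpha_m.
\]
Summing over the $\cop(w)$ spherical pairings yields $b'_w = \cop(w)\,\alpha_m$, and Theorem \ref{band_CLT} then produces a circularly symmetric complex Gaussian limit of total variance $\cop(w)\alpha_m$, as announced.

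The main obstacle will be the graph-theoretic step: showing that every spherical pairing of $(w, w^*)$ produces a Hamilton cycle $C_m$ on the sphere. This requires careful bookkeeping of vertex identifications, especially when $w$ has repeated letters or a non-trivial period. Once this structural fact is in place, the Fourier identity and the application of Theorem \ref{band_CLT} are routine.
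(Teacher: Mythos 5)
Your proposal is correct, and its overall architecture matches the paper's: apply Theorem \ref{band_CLT}, observe $a_w=c'_w=0$ for a $*$-free word, identify the $\cop(w)$ spherical pairings of $(w,w^*)$ as the rotations of the mirror pairing with $\Gamma_\phi\simeq C_m$, and evaluate $\alpha_\phi$. Where you genuinely diverge is in the computation of the constant. The paper does not compute $\mathrm{vol}\bigl(D_{C_m}^{(0)}\bigr)$ directly; instead it goes back to the finite-$N$ count $|\mathscr{I}_N(\phi)|=\Tr A^m$ for a circulant $0$--$1$ matrix $A$ encoding the band condition, interprets this as a return probability of a random walk mod $N$, extracts the relevant Fourier coefficient of $\phi(e^{i\theta})^m$ with $\phi(e^{i\theta})=\sin(l_N\theta/2)/\sin(\theta/2)$, and passes to the limit by dominated convergence. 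You instead take Lemma \ref{volume_lemma} at face value and compute the limiting volume itself: the change of variables $y_i=x_{i+1}-x_i$ turns $D_{C_m}^{(0)}$ into $\{\sum y_i=0,\ |y_i|<1/2\}$, whose $(m-1)$-volume is $\rho^{*m}(0)$ for $\rho=\mathds{1}_{(-1/2,1/2)}$, and Fourier inversion gives $\frac1\pi\int_{\R}\mathrm{sinc}(\theta)^m\,\dd\theta$. Your route is shorter and more faithful to the way $\alpha_\phi$ is actually defined in Lemma \ref{volume_lemma}, while the paper's route yields an exact finite-$N$ formula for $|\mathscr{I}_N(\phi)|$ as a byproduct (at the cost of partially redoing the Ehrhart-type limit already handled in that lemma). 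Both Fourier computations agree. One remark: you flag the identification of $\Gamma_\phi$ with a Hamilton cycle as the main remaining obstacle, but this is immediate --- gluing the two $m$-gons $f_w$ and $f_{w^*}$ along their boundaries by the mirror pairing (or any of its $\cop(w)$ rotations) identifies the two boundary circles edge-to-edge and vertex-to-vertex, leaving $V=m$, $E=m$, $F=2$ and a single boundary cycle; the paper asserts this with no more detail than you do, so there is no gap relative to the paper's own standard of rigor.
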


\begin{proof}
Convergence in distribution is given by Theorem \ref{band_CLT}; we are left to compute the specific values of $a_w, b_w', c_w'$ in the band case, when $w$ is star-free. As in the non-band case, $a_w = c_w' =0$ because there is no admissible pairing in the corresponding sets. For $b_w'$, there are $\cop(w)$ admissible pairings that are spherical, and for each of them, $\Ga_{\phi}$ is a cycle of size $m$. We now compute the value of $\al_{\phi}$ in that case, which can be reformulated as a classical exercise on circulant matrices. \medskip

For any $N,m$ and finite sequence $a_0, \dots, a_{N-1}$ of real positive numbers such that $\sum a_i=1$, we consider the circulant (Toeplitz) matrix $A$ such that $A_{i,j} = a_{j-i [N]}$. We also consider the random walk on $\{1,...,N\}$ driven by these probabilities, which can be seen as an increasing random walk on $\Z$, beginning at $0$ and taken modulo $N$. Notice that
$$
\frac{1}{N} \tr A^m = \frac{1}{N} \sum_{i_1, \dots, i_m} A_{i_1,i_2} \cdots A_{i_m, i_1}
 = \PP \left( S_{m+1} = 0 [N] \right).
$$
We introduce the generating function
$$
\phi(t) = \sum_{i=0}^{N-1} a_i t^i,
$$
so that the generating function of $S_m$ (on $\Z$) is $\phi(t)^m$, and so
$$
\tr A^m
= N \PP \left( S_{m} = 0 [N] \right) 
= N \sum_{k \geq 0} \left[ \phi(t)^{m} \right]_{kN}
= \frac{N}{2 \pi} \sum_{k \geq 0} \int_{-\pi}^{\pi} \phi(e^{i\theta})^{m} e^{- i kN \theta} \dd \theta
$$
which is an exact formula for the moments of the circulant matrix $A$. We now apply it to our problem by taking $a_i$ to match the band condition. \medskip

We consider i.i.d.\! variables $X_i$ uniform on $[-b_N,b_N]$ and the corresponding partial sums $S_m$. The matrix $A$ is defined as above, with $a_i = \de_{d_N(0,i)<b_N} $. Recall that we assume $b_N = \oo(N)$. We need to compute the term $k=0$, since $m$ is fixed and $N$ is diverging $[\phi(t)^m]_{kN}=0$ for a sufficiently large $N$. This argument yields
$$
|\mathscr{I}_N({\phi})|= \tr A^m = N \PP \left[ S_{m} = 0 [N] \right] = \frac{N}{2 \pi} \int_{-\pi}^{\pi} \phi(e^{i\theta})^{m} \dd \theta
$$
where 
$$
\phi(e^{i \theta}) = \sum_{k=-b_N}^{b_N} e^{i k \theta} 
=
\frac{\sin\left(\frac{l_N \theta}{2} \right)}{\sin\left( \frac{\theta}{ 2 } \right)}.
$$
This yields
$$
|\mathscr{I}_N({\phi})|= \frac{N}{2 \pi} \int_{-\pi}^{\pi} \frac{\sin^{m}\left(\frac{l_N \theta}{2} \right)}{\sin^{m}\left( \frac{\theta}{ 2 } \right)} \dd \theta
= \frac{N}{\pi l_N} \int_{-\infty}^{\infty} \frac{\sin^{m}\left({\theta} \right)}{\sin^{m}\left( \frac{\theta}{ l_N } \right)} \mathds{1}_{[-\pi l_N, \pi l_N]}(\theta) {\dd \theta}
$$
By dominated convergence using classical inequalities,
$$
\frac{1}{N l_N^{m-1} } |\mathscr{I}_N({\phi})|
\xrightarrow[N \rightarrow \infty]{}
\frac{1}{\pi} 
\int_{-\infty}^{\infty}
{\mathrm{sinc} ( \theta )^{m}}
{\dd \theta},
$$
which yields the claim.
\end{proof}

We refer the reader to \cite{Jana_band} for other results of this type (when $m=1$) as well as a combinatorial interpretation of the sequence $\al_m$.
\subsection{Extension to sparse random matrices with optimal sparsity }\label{sparse_section}
In this subsection, we consider $*$-words of random {\em sparse} matrices with i.i.d.\! entries. Let $Z$ be a complex random variable whose distribution satisfies conditions (i),(iii),(iv) that appear in the beginning of subsection \ref{fourthmoment_section}.
Let $0\le p=p_N\le 1$ so that $Np_N$ is diverging as $N \to \infty$. We consider random non-Hermitian sparse matrices $M$ of order $N$ with i.i.d.\! entries defined by
\begin{equation}\label{sparseentries}
M_{i,j} = \frac{1}{\sqrt{N p_N}} B_{i,j} Z_{i,j}
\end{equation}
where $Z_{i,j}$ are i.i.d.\! $Z$-distributed random variables and $B_{i,j}$ are independent Bernoulli variables with parameter $p_N$. The moments of $M_{i,j}$ are given by
$$
\E |M_{i,j}|^{2k} =  \frac{1}{N^k p_N^{k-1}} \E |Z|^{2k} 
$$
so that, in particular, the second moment matches the Gaussian case. We note that complex Ginibre is a special case, by taking $Z$ to be a complex Gaussian variable and $p_N=1$.

Given a $*$-word $w$, we consider the random matrix $M_w$ by naturally replacing every letter $G_i$ (resp. $G_i^*$) by the random matrix $M_i$ (resp. $M_i^*$ the conjugate transpose of $M_i$) where $M_1,M_2,...$ are i.i.d.\! matrices distributed according to (\ref{sparseentries}). The main result of this subsection is that under the assumptions on the distribution of $Z$ and the growth of $p_N$, $\E\Tr M_w$ is close to $\E\Tr G_w$ at first order.

\begin{proposition}\label{pro:MG}
Let $B$ be a Bernoulli random variable with parameter $p_N$ where $Np_N\to\infty$, and $Z$ a complex random variable satisfying assumptions (i),(iii),(iv). Suppose that $w$ is a $*$-word, and $M_w$ the corresponding $*$-word of $N\times N$ random matrices distributed according to (\ref{sparseentries}). Then,
$$
\E \left( \Tr M_{w} \right) = a_wN
+ \OO \left( \frac{1}{p_N} \right).
$$
\end{proposition}
The proposition implies that a weaker version of Theorem \ref{thm:mainFC} holds for $*$-words of random {\em sparse} matrices. Namely, the convergence \textit{in expectation only} of the empirical measure of the squared singular values to the Fuss-Catalan distribution. Theorem \ref{thm:mixed} concerning mixed moments can also be extended to the sparse case. The sparsity assumption $Np_N \rightarrow \infty$ is optimal and appears in other works related to sparse non-Hermitian matrices: for instance, convergence to the circular law for one sparse non-Hermitian matrix with i.i.d.\! entries has been established in \cite{Rudelson2}. This is coherent with our result on mixed moments of words, which suggests that the empirical eigenvalue distribution of any word of sparse matrices under the above sparsity assumption converges to the same limit as it does in the non-sparse model.

\begin{proof}
We again use the terminology that is used in the proof of Theorem \ref{genusexpansion}. Let $m$ be the length of $w$ and let $\mathscr{I} \subseteq \{1,...,N\}^m$ be the set of indexations. For every indexation $I=(i_1,...,i_m)\in\mathscr{I}$, we let 
\[
M(I):=M^{(1)}_{i_1,i_2}\cdots M^{(m)}_{i_m,i_1}~~\mbox{  and  
 }~~ G(I):=G^{(1)}_{i_1,i_2}\cdots G^{(m)}_{i_m,i_1},\] where $M^{(i)}$ and $G^{(i)}$ are the $i$-th matrices in $M_w$ and $G_w$ respectively. Note that each $M(I)$ is a product of $m$ random variables in $\mathscr{V}:=\{(M_r)_{s,t}~:~t\ge 1,~1\le s,t\le N\}$, or their conjugates. An indexation $I$ is called {balanced} if every variable in $\mathscr{V}$ appear in $M(I)$ the same number of times as its conjugate. For every indexation $I$ in the subset $\mathscr{I}_B\subseteq\mathscr{I}$ of balanced indexations, we denote by $l(I)$ the number of distinct variables of $\mathscr{V}$ that appear in $M(I)$. Clearly, $1\le l(I) \le m/2$ for every $I\in\mathscr{I}_B$. \medskip
 
Note that $\E[M(I)]=\E[G(I)]=0$ if $I$ is not balanced since the unbalanced moments of $Z$ and of a complex Gaussian variable vanish. In addition, if $I$ is balanced and $l(I)=m/2$ then $\E[M(I)]=\E[G(I)]=N^{-m/2}$. Indeed, in such case $\E[M(I)]$ is a product of $m/2$ second moments of independent $M_{i,j}$-distributed variables that match the $m/2$ second moments of independent Gaussian variables which appear in $\E[G(I)]$. Moreover, if $I$ is balanced and $1\le l(I)<m/2$, let $2k_1+\cdots+2k_{l(I)}=m$ denote the multiplicities of the variables in $M(I)$. Therefore,
 \begin{equation}
\label{lbound}
 \big|\E[M(I)]-\E[G(I)]\big| =\left| \prod_{i=1}^{l(I)}\frac{\E[Z]^{2k_i}}{N^{k_i}p_N^{k_i-1}}-\prod_{i=1}^{l(I)}\frac{\E[G]^{2k_i}}{N^{k_i}} \right|= \OO \left( p_N^{l(I)-m/2}N^{-m/2} \right),
 \end{equation}
 where $G$ denotes a Gaussian random variable. Note that the constant in the $\OO$-notation depends on $m$, the finite moments of $Z$ and the complex Gaussian distribution (but importantly not on $N$). \medskip
  
 Finally, for every $1\le l <m/2$, we denote by $\mathscr{I}_l$ the subset of balanced indexations $I$ for which $l(I)=l$, and bound the size of $\mathscr{I}_l$.

\begin{lemma}\label{indexbound1}
For every $1\le l<m/2$ there holds
$$
\left| \mathscr{I}_l  \right| = \OO( N^{l+1})
$$
\end{lemma}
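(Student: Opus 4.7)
The plan is to generalize the analysis of balanced indexations from the proof of Theorem \ref{genusexpansion}. Every balanced indexation $I$ with $l(I)=l$ partitions the $m$ positions of the word $w$ into $l$ blocks, according to which variable of $\mathscr{V}$ is featured at each position (star and non-star occurrences of the same variable being grouped together). Such a partition $P$ is necessarily \emph{admissible}, in the sense that each block lies within the positions of a fixed letter of $w$ (up to conjugation) and contains the same number of star and non-star positions. The number of admissible partitions of $[m]$ into $l$ blocks depends only on $m$ and $w$, hence is bounded by a constant $C_{m,w}$.

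For each fixed admissible partition $P$, following the geometric conventions of Theorem \ref{genusexpansion}, I would form the $2$-complex $X(P)$ by gluing together the edges of the polygonal face $f_w$ within each block of $P$, coherently with the orientations. This complex has a single face, $l$ edges, and some number $V(P)$ of vertex equivalence classes arising from identifying the $m$ corners of $f_w$. Every balanced indexation $I$ inducing the partition $P$ is uniquely determined by a labelling of the vertex classes of $X(P)$ by integers in $\{1,\ldots,N\}$, so the number of such indexations is at most $N^{V(P)}$.

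The main step is the topological bound $V(P)\le l+1$, which I would prove as follows. The $1$-skeleton of $X(P)$ is precisely the image of the boundary cycle of $f_w$ under the identification map, since every corner and every edge of $f_w$ lies on that boundary. As the continuous image of a connected space, this $1$-skeleton is a connected graph, and a connected graph on $V(P)$ vertices must carry at least $V(P)-1$ edges; since there are only $l$ edges, $V(P)\le l+1$ follows. Summing over the $C_{m,w}$ admissible partitions yields $|\mathscr{I}_l|\le C_{m,w}\,N^{l+1}=\OO(N^{l+1})$. Note that the assumption $l<m/2$ is not required for this bound itself; rather, it is what makes (\ref{lbound}) give a gain in $p_N$, which in turn produces the $\OO(1/p_N)$ error term in Proposition \ref{pro:MG}.
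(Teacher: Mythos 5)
Your argument is correct and is essentially the paper's proof in dual form: the paper takes the graph whose vertices are the distinct index values of $I$ and whose edges join consecutive values (a connected quotient of the $m$-cycle with at most $l(I)$ edges, hence at most $l(I)+1$ vertices), while you run the same Euler-type bound on the $1$-skeleton of the quotient of $f_w$ and then count labellings. Both hinge on the identical inequality $V\le E+1$ for a connected quotient of a cycle with $E\le l$, so I'd regard this as the same approach; your closing remark that $l<m/2$ is only needed for the gain in $p_N$ in (\ref{lbound}), not for the counting bound itself, is also accurate.
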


\begin{proof}
We start by showing that if an indexation $I=(i_1,...,i_{m})$ involves at least $l+2$ different indices from $1,...,N$, then $l(I) \geq l+1$. Indeed, consider a graph whose vertex set $\{i_1,...,i_m\}$ is the indices that appear in $I$ (without repetition), two vertices being connected by an edge if they appear consecutively in $I$ (with the convention that $i_{m+1}=i_1$). $G$ is a quotient graph of the $m$-cycle and is therefore connected. In addition,  $G$ has at most $l(I)$ edges and at least $l+2$ vertices, hence we find that $l(I)\ge (l+2)-1$. 

Therefore, for every $I\in\mathscr{I}_l$ there are at most $l+1$ degrees of freedom in choosing the indices, and each index is an integer between $1$ and $N$, whence the claim.
\end{proof}

The proof of the proposition follows from expanding the trace, using the above observations, Equation (\ref{lbound}) and Lemma \ref{indexbound1}.
\begin{align*}
    |\E(\Tr M_w)-\E(\Tr G_w)| &\le \sum_{I\in\mathscr{I}} \left| M(I)-G(I) \right|
    =\sum_{l=1}^{m/2-1}\sum_{I\in\mathscr{I}_l} \left| M(I)-G(I) \right| \\
    &= N\cdot \sum_{l=1}^{m/2-1}\OO \left((Np_N)^{l-m/2}\right)
    =N \cdot \OO\left(\frac 1{Np_N}\right),
\end{align*}
and the claim follows from Theorem \ref{abcthm}.
\end{proof}


\subsection{Extension to words involving letters from different RMT ensembles}\label{polyletters}

The results of this paper focus on $*$-words over the complex Ginibre ensemble. In this subsection, we show how to generalize the genus expansion to $*$-words involving letters from other Gaussian matrix ensembles, as well as their transposed or conjugate counterparts. The definition of an admissible pairing has to include different sets of conditions depending on the letters that are being paired. Moreover, the construction of the resulting surface also depends on the letters since some identifications of edges need not respect the orientation of their faces. As a result, an admissible pairing may lead to a non-orientable surface. \medskip
We consider the following possible letters :
\begin{itemize}
    \item $G_i$, $G_i^*$, $G_i^t$ and $\overline{G_i}$ where $(G_i)_{i \geq 1}$ are i.i.d.\!  complex Ginibre matrices, 
    \item $R_i$ and $R_i^t$ where $(R_i)_{i \geq 1}$ are i.i.d.\! real Ginibre matrices,
    \item $H_i, \overline{H_i}$ where $(H_i)_{i \geq 1}$ i.i.d.\! GUE matrices, and
    \item $(S_i)_{i \geq 1}$ i.i.d.\! GOE matrices; matrices from different ensembles being also independent.
\end{itemize}
We review the specific rules of admissibility and orientation of each letter in Subsection \ref{polyletterconstraints}. \medskip

Any compact non-orientable connected surface $S$ has a non-orientable genus $ g_{\text{no}}(S)$, such that $S$ is homeomorphic to the connected sum of  $ g_{\text{no}}(S)$ real projective planes, and its Euler characteristic is 
$$
\chi(S) = 2- g_{\text{no}}(S)
$$
(note the absence of a factor $2$). Using this fact, the proof of Theorem \ref{genusexpansion} is easily extended to this more general framework. Let $w_1, \dots, w_k$ be $*$-words over alphabet of the letters defined above and $N$ an integer. We have the formula
\begin{equation}\label{nosurfaces}
    \E \left( \prod_{i=1}^k \Tr( G_{w_i}) \right) = \sum_{\phi \in \AdPair (w_1, \dots w_k)} {N^{2c({S_\phi}) -k - 2 g_{\text{o}} (S_\phi) - g_{\text{no}} (S_\phi)} },
\end{equation}
where $g_{\text{o}}$ is the sum of the genera of orientable connected components of $S_{\phi}$, and $ g_{\text{no}}$ is the corresponding sum for the non-orientable components. \medskip

One direct consequence of this formula is that for any such general $*$-word $w$,
\begin{equation}
\E \lll( \Tr G_w \rr) = a_w N + p_w + \underset{N \rightarrow \infty}{O} \lll( \frac{1}{N} \rr)
\end{equation}
where $a_w$ is, as before, the number of elements of $\AdPair(w)$ yielding a sphere $\S$, and $p_w$ the number of elements of $\AdPair(w)$ yielding a projective plane $\Proj$. The central limit theorem (Theorem \ref{abcthm}) and the related lemmas also hold for the centered variable $\Tr(G_w)-a_wN-p_w$, and can be proved in the very same way (the only difference is that the notion of atom has to be extended as to include projective planes components made from a single face).

\subsubsection{Types of letters and specific rules}\label{polyletterconstraints} We treat below each case separately.  We define the \textit{type} of a letter as its ensemble together with its integer index. For instance, we will speak of letters and edges of type $G_i$, $R_i$, $H_i$ and $S_i$.

\begin{enumerate}
    \item  \textbf{More general words over complex Ginibre.}
Before turning to other ensembles, we consider words in four categories of letters: $G_i, G_i^*, G_i^t, \overline{G_i}$ for each index $i$. \medskip

Admissible pairings are defined as pairings between edges of same type (integer index of the letter) and opposite sign (the sign being the presence or absence of conjugation, so that letters $G,G^t$ are considered as having the same sign, while $G^*, \overline{G}$ take the opposite sign). In the analogy with surfaces, the edges are endowed with a direct orientation when corresponding to letters without transposition $G, \overline{G}$ and indirect if corresponding to $G^*, G^t$. A pairing between compatible letters is now understood as identifying the corresponding edges in either a direct or indirect way, as illustrated on Figure \ref{compatibilityCGE} below.

\begin{figure}[ht]
\begin{center}
\begin{tikzpicture}[scale=1.3]
\draw  (-.9,1.14) -- (.9,1.14);
\draw  (-.9,1) -- (.9,1);
\draw  (-.9,-1.14) -- (.9,-1.14);
\draw  (-.9,-1) -- (.9,-1);
 \draw (-1,.9) -- (-1,.3) to[bend left=10] (-1.07,0) to[bend right=10] (-1.14,-.3) -- (-1.14,-.9);
 \draw (-1.14,.9) -- (-1.14,.3) to[bend right=10] (-1.07,0) to[bend left=10] (-1,-.3)-- (-1,-.9);
 \draw (1,.9) -- (1,.3) to[bend right=10] (1.07,0) to[bend left=10] (1.14,-.3) -- (1.14,-.9);
 \draw (1.14,.9) -- (1.14,.3) to[bend left=10] (1.07,0) to[bend right=10] (1,-.3)-- (1,-.9);
\draw (0,1.3) node {\small direct};
\draw (0,-.8) node {\small direct};
\draw (-1.35,0) node {
  \begin{tikzpicture}
      \node[rotate=90] {\small indirect};    
    \end{tikzpicture}
};
\draw (.8,0) node {
  \begin{tikzpicture}
 \node[rotate=90] {\small indirect};    
 \end{tikzpicture}
};
\draw[fill=white] (-1-.3,1-.3) rectangle (-1+.3,1+.3);
\draw[fill=white] (-1-.3,-1-.3) rectangle (-1+.3,-1+.3);
\draw[fill=white] (1-.3,1-.3) rectangle (1+.3,1+.3);
\draw[fill=white] (1-.3,-1-.3) rectangle (1+.3,-1+.3);
\draw (-1,1) node {$G_i$};
\draw (1,1) node {$G_i^*$};
\draw (1,-1) node {$G_i^t$};
\draw (-1,-1) node {$\overline{G_i}$};
\end{tikzpicture}
\end{center}
\caption{Compatibility between the four possible edges of a given type $G_i$.}
\label{compatibilityCGE}
\end{figure}
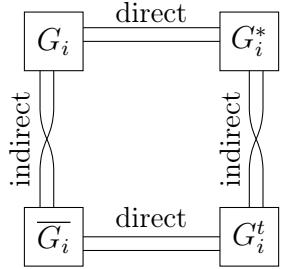

\item \textbf{Real Ginibre Ensemble.} The same technique can be applied to matrices with i.i.d.\! real Gaussian coefficients. In that case, any pairing that matches letters of the same type is admissible, and the pairs $R_i, R_i$ and $R_i^t,R_i^t$ are matched indirectly whereas $R_i$ is matched directly with $R_i^t$, as illustrated on Figure \ref{compatibilityRGE}. Genus expansion for real Ginibre matrices has been used by Redelmeier in \cite{Redelmeier} in order to establish real second order freeness.

\begin{figure}[ht]
\begin{center}
\begin{tikzpicture}[scale=1.2]
\draw  (-.9,1.1) -- (.9,1.1);
\draw  (-.9,.9) -- (.9,.9);
 \draw (-1.45,1.25) circle (.3cm);
  \draw (-1.45,.75) circle (.3cm);
  \draw (-1.45,1.25) circle (.5cm);
  \draw (-1.45,.75) circle (.5cm);
  \filldraw[fill=white, draw=white] (-2,.7) rectangle (-1,1.3);
 \draw (-1.945,1.31) to[bend right=20] (-1.85,1) to[bend left=20] (-1.745,.69);
 \draw (-1.745,1.31) to[bend left=20] (-1.85,1) to[bend right=20] (-1.945,.69);
 \draw (1.45,1.25) circle (.3cm);
  \draw (1.45,.75) circle (.3cm);
  \draw (1.45,1.25) circle (.5cm);
  \draw (1.45,.75) circle (.5cm);
  \filldraw[fill=white, draw=white] (2,.7) rectangle (1,1.3);
 \draw (1.945,1.31) to[bend left=20] (1.85,1) to[bend right=20] (1.745,.69);
 \draw (1.745,1.31) to[bend right=20] (1.85,1) to[bend left=20] (1.945,.69);
\draw (0,1.3) node {\small direct};
\draw (-2.2,1) node {
  \begin{tikzpicture}
      \node[rotate=90] {\small indirect};    
    \end{tikzpicture}
};
\draw (2.2,1) node {
  \begin{tikzpicture}
 \node[rotate=90] {\small indirect};    
 \end{tikzpicture}
};
\draw[fill=white] (-1-.3,1-.3) rectangle (-1+.3,1+.3);
\draw[fill=white] (1-.3,1-.3) rectangle (1+.3,1+.3);
\draw (-1,1) node {$R_i$};
\draw (1,1) node {$R_i^t$};
\end{tikzpicture}
\end{center}
\caption{Compatibility between the two possible edges of a given type $R_i$.}
\label{compatibilityRGE}
\end{figure}
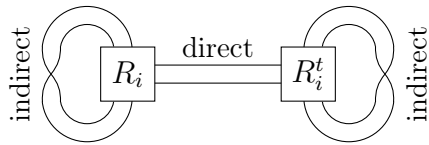

\item \textbf{Gaussian Unitary Ensemble (GUE).}  Edges corresponding to the GUE are usually not endowed with an orientation, as there is only one letter of type $H_i$. The only admissibility rule for a pairing is that it has to match a GUE letters with a GUE letter of the same type. The resulting identification of vertices is always direct (so that, for instance, words involving only the letters $H_i, G_i, G_i^*$ will yield only orientable surfaces). This is the ensemble on which genus expansion has been applied most often. It played a central role, for instance, in the first proof of Harer-Zagier formula. If one considers a more general setting involving letters $H_i$ and $\overline{H}_i$, the compatibility rule is summarized by Figure \ref{compatibilityGUE} below.

\begin{figure}[ht]
\begin{center}
\begin{tikzpicture}[scale=1.2]
\draw  (-.9,1.1) -- (-.3,1.1) to[bend left=20] (0,1) to[bend right=20] (.3,.9) -- (.9,.9);
\draw  (-.9,.9) -- (-.3,.9) to[bend right=20] (0,1) to[bend left=20] (.3,1.1)-- (.9,1.1);
 \draw (-1.45,1.25) circle (.3cm);
  \draw (-1.45,.75) circle (.3cm);
  \draw (-1.45,1.25) circle (.5cm);
  \draw (-1.45,.75) circle (.5cm);
  \filldraw[fill=white, draw=white] (-2,.7) rectangle (-1,1.3);
 \draw (-1.945,1.31) -- (-1.945,.69);
 \draw (-1.745,1.31) -- (-1.745,.69);
 \draw (1.45,1.25) circle (.3cm);
  \draw (1.45,.75) circle (.3cm);
  \draw (1.45,1.25) circle (.5cm);
  \draw (1.45,.75) circle (.5cm);
  \filldraw[fill=white, draw=white] (2,.7) rectangle (1,1.3);
 \draw (1.945,1.31) -- (1.945,.69);
 \draw (1.745,1.31) -- (1.745,.69);
\draw (0,1.3) node {\small indirect};
\draw (-2.2,1) node {
  \begin{tikzpicture}
      \node[rotate=90] {\small direct};    
    \end{tikzpicture}
};
\draw (2.2,1) node {
  \begin{tikzpicture}
 \node[rotate=90] {\small direct};    
 \end{tikzpicture}
};
\draw[fill=white] (-1-.3,1-.3) rectangle (-1+.3,1+.3);
\draw[fill=white] (1-.3,1-.3) rectangle (1+.3,1+.3);
\draw (-1,1) node {$H_i$};
\draw (1,1) node {$\overline{H_i}$};
\end{tikzpicture}
\end{center}
\caption{Compatibility between the two possible edges of a given type $H_i$.}
\label{compatibilityGUE}
\end{figure}
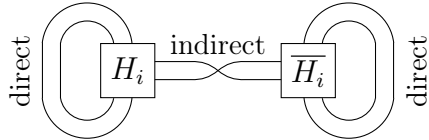

\item \textbf{Gaussian Orthogonal Ensemble (GOE).}

As for the GUE, edges corresponding to GOE letters are not endowed with an orientation, as there is only only letter of type $S_i$. The only admissibility rule for a pairing is that it has to match every GOE letter to a GOE letter of the same type; but the pairing comes with the specification of each matching to be meant in the direct or the indirect sense. (This is the only case in which an information has to be added to the pairing itself, so that the resulting surface is well defined). This is illustrated in Figure \ref{compatibilityGOE} below.

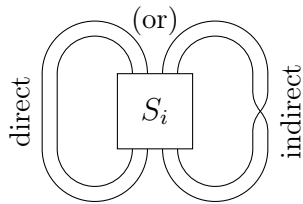
\begin{figure}[ht]
\begin{center}
\begin{tikzpicture}[scale=1]
 \draw (-.8,1.5) circle (.5cm);
  \draw (-.8,.5) circle (.5cm);
  \draw (-.8,1.5) circle (.7cm);
  \draw (-.8,.5) circle (.7cm);
  \filldraw[fill=white, draw=white] (-.4,.5) rectangle (-2,1.5);
 \draw (-1.3,1.51) -- (-1.3,.49);
 \draw (-1.5,1.51) -- (-1.5,.49);
 \draw (.8,1.5) circle (.5cm);
  \draw (.8,.5) circle (.5cm);
  \draw (.8,1.5) circle (.7cm);
  \draw (.8,.5) circle (.7cm);
  \filldraw[fill=white, draw=white] (.4,.5) rectangle (2,1.5);
 \draw (1.3,1.51) -- (1.3,1.3) to[bend right=20] (1.4,1) to[bend left=20] (1.5,.7) -- (1.5,.49);
 \draw (1.5,1.51) -- (1.5,1.3) to[bend left=20] (1.4,1) to[bend right=20] (1.3,.7) -- (1.3,.49);
\draw (-1.8,1) node {
  \begin{tikzpicture}
      \node[rotate=90] {\small direct};    
    \end{tikzpicture}
};
\draw (1.8,1) node {
  \begin{tikzpicture}
 \node[rotate=90] {\small indirect};    
 \end{tikzpicture}
};
\draw[fill=white] (-.5,1-.5) rectangle (.5,1.5);
\draw (0,1) node {$S_i$};
\draw (0,2.2) node {\small (or)};
\end{tikzpicture}
\end{center}
\caption{Compatibility between two edges of type $S_i$. Orientation, for each pair, has to be specified together with the pairing.}
\label{compatibilityGOE}
\end{figure}

\end{enumerate}

\subsubsection{Examples.}\label{examplesubsec3}
Many-letter words, as defined above, are not as natural as just $*$-words from the point of view of random matrices. However, they might prove to be useful if one is interested in combinatorial problems over non-orientable surfaces of low genus, such as the projective plane or the Klein bottle. We give below basic examples yielding these two celebrated non-orientable surfaces.

\begin{itemize}
\item If $w= G_1 G_2 \overline{G_1} \overline{G_2}$, the only admissible pairing yields a projective plane, as represented on Figure \ref{Boy_ex}, which is of non-orientable genus $1$. As a result,
    $$
    \E \Tr \lll( G_1 G_2 \overline{G_1} \overline{G_2} \rr) = 1.
    $$
    In addition, 
    \[
    \Tr G_w - 1 \distconv \mathscr{N}_{\R}(0,1),
    \]
    since $a_w=0$ and $b_w=c_w=p_w=1$ by a direct computation.
    
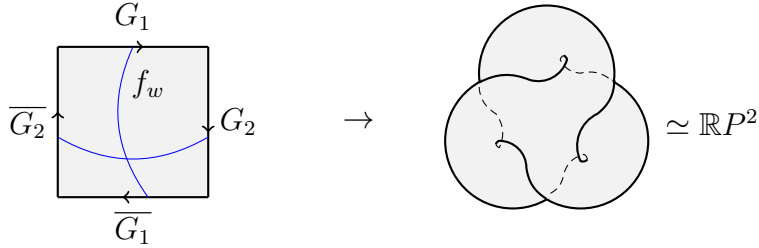
\begin{figure}[ht]
\begin{center}
\begin{tikzpicture}
\filldraw[fill=black!5!white,draw=white] (-1,-1) rectangle (1,1);
\begin{scope}[thick,decoration={
    markings,
    mark=at position 0.57 with {\arrow{>}}}
    ] 
\draw[postaction={decorate}] (-1,1) -- (1,1);
\draw[postaction={decorate}] (1,1) -- (1,-1);
\draw[postaction={decorate}] (1,-1) -- (-1,-1);
\draw[postaction={decorate}] (-1,-1) -- (-1,1);
\end{scope}
\draw (0,1.4) node {$G_1$};
\draw (0,-1.4) node {$\overline{G_1}$};
\draw (1.4,0) node {$G_2$};
\draw (-1.4,0) node {$\overline{G_2}$};
\draw (0.2,0.5) node {$f_{w}$};
\draw[blue] (-1,-.2) to[bend right] (1,-.2);
\draw[blue] (0,1) to[bend right] (.2,-1);
\draw (3,0) node {$\rightarrow$};
\draw (5.5,0.2) node {
\begin{tikzpicture}[scale=.9]
\filldraw[fill=black!5!white,thick] (0,.5) circle (1cm);
\filldraw[fill=black!5!white,thick] (.5,-.5) circle (1cm);
\filldraw[fill=black!5!white,thick] (-.5,-.5) circle (1cm);
\filldraw[fill=black!5!white,draw=black!5!white] (0,-.215) circle (1.135cm);
\draw[thick] (0,-1.36) to[bend left=25] (-.3,-.9) to[bend right=45] (-.7,-.5) to[bend right] (-.75,-.55);
\draw[densely dashed] (-.75,-.55) to[bend right] (-.7,-.6) to[bend right] (-.75,-.1) to[bend left] (-1,.35) ;
\draw[thick] (-1,.36) to[bend left=25] (-.25,.4) to[bend right=45] (0.3,.7) to[bend right] (0.25,.76);
\draw[densely dashed] (0.25,.76) to[bend right] (.2,.7) to[bend right] (0.5,.5) to[bend left] (1,.35) ;
\draw[thick] (1,.4) to[bend left=25] (.75,-.1) to[bend right=45] (.5,-.8) to[bend right] (.6,-.78);
\draw[densely dashed] (.6,-.78) to[bend right] (.55,-.7) to[bend right] (0.25,-1) to[bend left] (0,-1.3) ;
\end{tikzpicture}    
};
\draw (7.7,0) node {$\simeq \Proj$};
\end{tikzpicture}
\end{center}
\caption{Admissible pairing and resulting projective plane.}
\label{Boy_ex}
\end{figure}
    
\item If $w= G_1 G_2 G_1^* \overline{G_2}$, the only admissible pairing yields a Klein bottle, as represented on Figure \ref{Klein_bottle_ex}, which has non-orientable genus $2$. Therefore,
    $$
    \E \Tr \lll( G_1 G_2 G_1^* \overline{G_2} \rr) = \frac{1}{N}.
    $$
    
\begin{figure}[ht]
\begin{center}
\begin{tikzpicture}
\filldraw[fill=black!5!white,draw=white] (-1,-1) rectangle (1,1);
\begin{scope}[thick,decoration={
    markings,
    mark=at position 0.57 with {\arrow{>}}}
    ] 
\draw[postaction={decorate}] (-1,1) -- (1,1);
\draw[postaction={decorate}] (1,1) -- (1,-1);
\draw[postaction={decorate}] (-1,-1) -- (1,-1);
\draw[postaction={decorate}] (-1,-1) -- (-1,1);
\end{scope}
\draw (0,1.4) node {$G_1$};
\draw (0,-1.4) node {$G_1^*$};
\draw (1.4,0) node {$G_2$};
\draw (-1.4,0) node {$\overline{G_2}$};
\draw (0.2,0.5) node {$f_{w}$};
\draw[blue] (-1,-.2) to[bend right] (1,-.2);
\draw[blue] (0,1) to[bend right] (.2,-1);
\draw (3,0) node {$\rightarrow$};
\draw (5,0) node {
\begin{tikzpicture}[scale=.8]
\filldraw[fill=black!5!white,thick] (-.5,-1) to[bend right=60] (.5,-1) to[bend right] (.5,0) to[bend left] (.5,1) to[bend right] (.5,2) to[bend right] (0,2.3) to[bend right] (-.5,2) to[bend right] (-.5,1) to[bend left] (-.5,0) to[bend right] (-.5,-1);
\filldraw[fill=white,thick] (0,1) to[bend right] (.2,2) to[bend right=60] (-.2,2) to[bend right] (0,1);
\draw[thick] (0,1) to[bend left] (-.35,.5) ;
\draw[densely dashed] (0,1) to[bend right=30] (-.35,.5) ;
\draw[densely dashed] (-.35,.5) to[bend left] (-.3,-1) to[bend left] (-.5,-1) ;
\draw[densely dashed] (0,1) to[bend left=20] (.2,-.7) to[bend right] (.35,-1) to[bend right] (.5,-1) ;
\end{tikzpicture}    
};
\draw (6.5,0) node {$\simeq N_2$};
\end{tikzpicture}
\end{center}
\caption{Admissible pairing and resulting Klein bottle.}
\label{Klein_bottle_ex}
\end{figure}
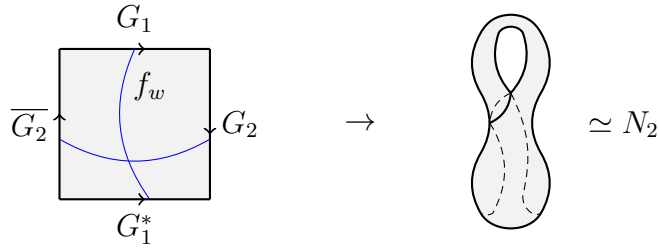

After checking every admissible pairing and their resulting surfaces, the parameters are found to be
$$a_w=0, \quad p_w=0, \quad b_w=1, \quad c_w=0,$$
so that the central limit theorem gives
$$
\Tr \lll( G_1 G_2 G_1^* \overline{G_2} \rr) \distconv \mathscr{N}_{\C}(0,1).
$$

\end{itemize}


\section*{Acknowledgments}
The authors would like to thank Gernot Akemann, Benson Au, Paul Bourgade, Jesper Ipsen, Camille Male, Jamie Mingo, Doron Puder, Emily Redelmeier, Roland Speicher, Wojciech Tarnowski and Ofer Zeitouni for useful discussions, comments and references, as well as the anonymous referee for a suggestion that greatly improved one of the theorems. \medskip

G.D. gratefully acknowledges support from the grants NSF DMS-1812114 of P. Bourgade (PI) and NSF CAREER DMS-1653602 of L.-P. Arguin (PI), as well as the European Union's Horizon 2020 research and innovation programme under the Marie-Sk{\l}odowska-Curie Grant Agreement No. 754411.

\begin{bibdiv}
\begin{biblist}

\bib{hooft}{incollection}{
  title={A planar diagram theory for strong interactions},
  author={'t Hooft, G.},
  booktitle={The Large N Expansion In Quantum Field Theory And Statistical Physics: From Spin Systems to 2-Dimensional Gravity},
  pages={80--92},
  year={1993},
  publisher={World Scientific}
}

\bib{AkemannIpsen}{article}{
   author={Akemann, G.},
   author={Ipsen, J. R.},
   title={Recent exact and asymptotic results for products of independent
   random matrices},
   journal={Acta Phys. Polon. B},
   volume={46},
   date={2015},
   number={9},
   pages={1747--1784},
}

\bib{Alexeev_beamer}{article}{
   author={Alexeev, N.},
   title={Gaussian random matrices and genus expansion},
   note={Available online at: \\
   \textit{http://www.pdmi.ras.ru/EIMI/2011/NTS/presentations/alexeev.pdf}}
}   

\bib{Alexeev}{article}{
   author={Alexeev, N.},
   author={G\"otze, F.},
   author={Tikhomirov, A.},
   title={Asymptotic distribution of singular values of powers of random matrices},
  journal={Lithuanian mathematical journal},
  volume={50},
  number={2},
  pages={121--132},
  year={2010},
  publisher={Springer}
}   

\bib{Au2018}{article}{
  title={Traffic distributions of random band matrices},
  author={Au, B.},
  journal={Electronic Journal of Probability},
  volume={23},
  year={2018}
}

\bib{Au2019}{article}{
  title={Finite-rank perturbations of random band matrices via infinitesimal free probability},
  author={Au, B.},
  journal={Communications on Pure and Applied Mathematics},
  year={2020},
  publisher={Wiley Online Library}
}



\bib{parisi}{incollection}{
  title={Planar diagrams},
  author={Br{\'e}zin, E.},
  author={Itzykson, C.},
  author={Parisi, G.},
  author={Zuber, J.},
  booktitle={The Large N Expansion In Quantum Field Theory And Statistical Physics: From Spin Systems to 2-Dimensional Gravity},
  pages={567--583},
  year={1993},
  publisher={World Scientific}
}

\bib{BurdaNowak}{article}{
   author={Burda,Z.},
   author={Nowak, M.A.},
      author={Swiech, A.},
   title={Spectral relations between products and powers of isotropic random matrices.},
  journal={Physical Review E},
  volume={86},
  number={6},
  pages={061137},
  year={2012},
  publisher={APS}
}

\bib{BurdaSwiech}{article}{
author = {Burda, Z.},
author = {Swiech, A.},
  title = {Quaternionic $R$ transform and non-Hermitian random matrices},
  journal = {Phys. Rev. E},
  volume = {92},
  number = {5},
  pages = {052111},
  year = {2015},
  publisher = {American Physical Society},
}

\bib{CapitaineCasalis1}{article}{
author={Capitaine, M.},
author={Casalis, M.},
title={Cumulants for random matrices as convolutions on the symmetric group},
journal={Probability Theory and Related Fields},
year={2006},
volume={136},
number={1},
pages={19--36}
}

\bib{CapitaineCasalis2}{article}{
author={Capitaine, M.},
author={Casalis, M.},
  title={Cumulants for random matrices as convolutions on the symmetric group, II},
  journal={Journal of Theoretical Probability},
  volume={20},
  number={3},
  pages={505--533},
  year={2007},
  publisher={Springer}
}

\bib{DiaconisShahshahani}{article}{
   author={Diaconis, P.},
   author={Shahshahani, M.},
   title={On the eigenvalues of random matrices},
   note={Studies in applied probability},
   journal={J. Appl. Probab.},
   volume={31A},
   date={1994},
   pages={49--62},
}

\bib{DubachPowers}{article}{
  author={Dubach, G.},
  title={Powers of Ginibre eigenvalues},
  journal={Electronic Journal of Probability},
  volume={23},
  year={2018}
}


\bib{feynman}{incollection}{
  title={Space-time approach to non-relativistic quantum mechanics},
  author={Feynman, R.},
  booktitle={Feynman's Thesis - A New Approach To Quantum Theory},
  pages={71--109},
  year={2005},
  publisher={World Scientific}
}

\bib{Gotze2015}{article}{
  author={G{\"o}tze, F.},
  author={K{\"o}sters, H.},
  author={Tikhomirov, A.},
  title={Asymptotic spectra of matrix-valued functions of independent random matrices and free probability},
  journal={Random Matrices: Theory and Applications},
  volume={4},
  number={02},
  pages={1550005},
  year={2015},
  publisher={World Scientific}
}

\bib{Guionnet2006}{article}{
  title={Random matrices and enumeration of maps},
  author={Guionnet, A.},
  booktitle={International Congress of Mathematicians},
  volume={3},
  pages={623--636},
  year={2006}
}

\bib{GuionnetMaurel}{article}{
  title={Second order asymptotics for matrix models},
  author={Guionnet, A.},
  author={Maurel-Segala, E.},
  journal={The Annals of Probability},
  volume={35},
  number={6},
  pages={2160--2212},
  year={2007},
  publisher={Institute of Mathematical Statistics}
}

\bib{GuionnetMaida}{article}{
  title={Character expansion method for the first order asymptotics of a matrix integral},
  author={Guionnet, A.},
  author={Ma{\"\i}da, M.},
  journal={Probability theory and related fields},
  volume={132},
  number={4},
  pages={539--578},
  year={2005},
  publisher={Springer}
}

\bib{harer}{article}{
  title={The Euler characteristic of the moduli space of curves},
  author={Harer, J.},
  author={Zagier, D.},
  journal={Inventiones mathematicae},
  volume={85},
  number={3},
  pages={457--485},
  year={1986},
  publisher={Springer}
}

\bib{Ipsenthesis}{article}{
   author={Ipsen, J. R.},
   title={Products of Independent Gaussian Random Matrices (PhD thesis)},
   eprint={arXiv:1510.06128},
   volume={},
   date={2015},
   number={},
   pages={},
}

\bib{Jana_band}{article}{
  title={CLT for non-Hermitian random band matrices with variance profiles},
  author={Jana, I.},
  eprint={arXiv:1904.11098},
  year={2019}
}

\bib{KempSpeicher}{article}{
   author={Kemp, T.},
   author={Speicher, R.},
   title={Strong Haagerup inequalities for free $\scr R$-diagonal elements},
   journal={J. Funct. Anal.},
   volume={251},
   date={2007},
   number={1},
   pages={141--173},
}

\bib{Kieburg2016}{article}{
  author={Kieburg, M.},
  author={K{\"o}sters, H.},
  title={Exact relation between singular value and eigenvalue statistics},
  journal={Random Matrices: Theory and Applications},
  volume={5},
  number={04},
  pages={1650015},
  year={2016},
  publisher={World Scientific}
}

\bib{Kieburg2019}{article}{
  author={Kieburg, M.},
  author={K{\"o}sters, H.},
  title={Products of random matrices from polynomial ensembles},
  booktitle={Annales de l'Institut Henri Poincar{\'e}, Probabilit{\'e}s et Statistiques},
  volume={55},
  number={1},
  pages={98--126},
  year={2019},
  organization={Institut Henri Poincar{\'e}}
}

\bib{LambertCLT}{article}{
author = {Lambert, G.},
year = {2018},
title = {Limit theorems for biorthogonal ensembles and related combinatorial identities},
volume = {329},
journal = {Advances in Mathematics},
pages={590--648}
}

\bib{book}{book}{
  title={Graphs on surfaces and their applications},
  author={Lando, S.},
  author={Zvonkin, A.},
  volume={141},
  year={2013},
  publisher={Springer Science \& Business Media}
}


\bib{Puder}{article}{
  title={Matrix Group Integrals, Surfaces, and Mapping Class Groups I: $U(n)$},
  author={Magee, M.},
  author={Puder, D.},
  journal={Inventiones mathematicae},
  pages={1--71},
  year={2019},
  publisher={Springer}
}

\bib{Male2017}{article}{
  title={The limiting distributions of large heavy Wigner and arbitrary random matrices},
  author={Male, C.},
  journal={Journal of Functional Analysis},
  volume={272},
  number={1},
  pages={1--46},
  year={2017}
}

\bib{MingoNica2004}{article}{
  author={Mingo, J. A.},
  author={Nica, A.},
  title={Annular noncrossing permutations and partitions, and second-order asymptotics for random matrices},
  journal={International Mathematics Research Notices},
  number={28},
  pages={1413--1460},
  year={2004},
  publisher={Hindawi Publishing Corporation}
}

\bib{MingoSpeicherCUE}{article}{
   author={Mingo, J. A.},
   author={\'{S}niady, P.},
   author={Speicher, R.},
   title={Second order freeness and fluctuations of random matrices. II.
   Unitary random matrices},
   journal={Adv. Math.},
   volume={209},
   date={2007},
   number={1},
   pages={212--240},
}

\bib{MingoSpeicher2}{article}{
   author={Mingo, J. A.},
   author={Speicher, R.},
   title={Second order freeness and fluctuations of random matrices. I.
   Gaussian and Wishart matrices and cyclic Fock spaces},
   journal={J. Funct. Anal.},
   volume={235},
   date={2006},
   number={1},
   pages={226--270},
}

\bib{MingoSpeicher1}{book}{
   author={Mingo, J. A.},
   author={Speicher, R.},
   title={Free probability and random matrices},
   series={Fields Institute Monographs},
   volume={35},
   publisher={Springer, New York},
   date={2017},
   pages={xiv+336},
}

\bib{NicaSpeicher}{book}{
   author={Nica, A.},
   author={Speicher, R.},
   title={Lectures on the combinatorics of free probability},
   series={London Mathematical Society Lecture Note Series},
   volume={335},
   publisher={Cambridge University Press, Cambridge},
   date={2006},
   pages={xvi+417},
   }

\bib{NowTar2018}{article}{
   author={Nowak, M. A.},
   author={Tarnowski, W.},
   title={Probing non-orthogonality of eigenvectors in non-Hermitian matrix models: diagrammatic approach},
   journal={J. High Energy Phys.},
   date={2018},
   number={6},
   pages={152, front matter+33},
}

\bib{Penson}{article}{
   author={Penson, K. A.},
   author={\.Zyczkowski, K.},
  title={Product of Ginibre matrices: Fuss-catalan and Raney distributions},
  journal={Physical Review E},
  volume={83},
  number={6},
  pages={061118},
  year={2011},
  publisher={APS}
}

\bib{Radulescu}{article}{
  author={R\u{a}dulescu, F.},
  title={Combinatorial aspects of Connes's embedding conjecture and asymptotic distribution of traces of products of unitaries},
  journal={Operator theory},
  volume={20},
  pages={197--205},
  year={2006}
}

\bib{RedelmeierWishart}{article}{
   author={Redelmeier, C. E. I.},
   title={Genus expansion for real Wishart matrices},
   journal={J. Theoret. Probab.},
   volume={24},
   date={2011},
   number={4},
   pages={1044--1062},
}

\bib{Redelmeier}{article}{
   author={Redelmeier, C. E. I.},
   title={Real second-order freeness and the asymptotic real second-order
   freeness of several real matrix models},
   journal={Int. Math. Res. Not. IMRN},
   date={2014},
   number={12},
   pages={3353--3395},
}

\bib{RedelmeierQGE}{article}{
   author={Redelmeier, C. E. I.},
   title={Explicit multi-matrix topological expansion for quaternionic
   random matrices},
   journal={J. Math. Phys.},
   volume={57},
   date={2016},
   number={2},
   pages={023511, 25},
}

\bib{RiderVirag}{article}{
   author={Rider, B.},
   author={Vir\'ag, B.},
   title={The noise in the circular law and the Gaussian free field},
   journal={Int. Math. Res. Not. IMRN},
   date={2007},
   number={2},
}

\bib{Rudelson2}{article}{
  author={Rudelson, M.},
  author={Tikhomirov, K.},
  title={The sparse circular law under minimal assumptions},
  journal={Geometric and Functional Analysis},
  volume={29},
  number={2},
  pages={561--637},
  year={2019},
  publisher={Springer}
}

\bib{tutte}{article}{
  title={A census of planar triangulations},
  author={Tutte, W.},
  journal={Canadian Journal of Mathematics},
  volume={14},
  pages={21--38},
  year={1962},
  publisher={Cambridge University Press}
}

\bib{tutte1}{article}{
  title={A census of planar maps},
  author={Tutte, W.},
  journal={Canadian Journal of Mathematics},
  volume={15},
  pages={249--271},
  year={1963},
  publisher={Cambridge University Press}
}

\bib{StarrWalters}{article}{
   author={Walters, M.},
   author={Starr, S.},
   title={A note on mixed matrix moments for the complex Ginibre ensemble},
   journal={J. Math. Phys.},
   volume={56},
   date={2015},
   number={1},
   pages={013301, 20},
}

\bib{zvonkin}{article}{
  title={Matrix integrals and map enumeration: an accessible introduction},
  author={Zvonkin, A.},
  journal={Mathematical and Computer Modelling},
  volume={26},
  number={8-10},
  pages={281--304},
  year={1997},
  publisher={Elsevier}
}
\end{biblist}
\end{bibdiv}

\end{document}